\numberwithin{equation}{section}
\newtheorem{thm}{Theorem}[section]
\newtheorem{lem}{Lemma}[section]
\newtheorem{cor}{Corollary}[section]
\newcommand{\fincal}{\end{eqnarray*}}
\begin{document}
\title[EKGM systems on compact Riemannian manifolds]
{Existence and a priori bounds for\\
electrostatic Klein-Gordon-Maxwell\\
systems in fully inhomogeneous spaces}
\author{Olivier Druet}
\address{Olivier Druet, Ecole normale sup\'erieure de Lyon, D\'epartement de 
Math\'ematiques - UMPA, 46 all\'ee d'Italie, 69364 Lyon cedex 07,
France}
\email{Olivier.Druet@umpa.ens-lyon.fr}
\author{Emmanuel Hebey}
\address{Emmanuel Hebey, Universit\'e de Cergy-Pontoise, 
D\'epartement de Math\'ematiques, Site de 
Saint-Martin, 2 avenue Adolphe Chauvin, 
95302 Cergy-Pontoise cedex, 
France}
\email{Emmanuel.Hebey@math.u-cergy.fr}

\thanks{To appear in Communications in Contemporary Mathematics\\
The authors were partially supported by the ANR grant 
ANR-08-BLAN-0335-01. The second author was also partially supported by a MIT-France fund.}

\begin{abstract} We prove existence and uniform bounds  
for electrostatic Klein-Gordon-Maxwell systems in the inhomogeneous 
context of a compact Riemannian manifold when the mass potential, balanced by the phase, 
is small in a quantified sense. Phase compensation for electrostatic Klein-Gordon-Maxwell systems 
and the positive mass theorem are used in a crucial way. 
\end{abstract}

\maketitle

Electrostatic Klein-Gordon-Maxwell systems are electrostatic derivations of the Klein-Gordon-Maxwell systems which, in turn, are a special case of the Yang-Mills-Higgs 
equation. They arise naturally in quantum mechanics. Roughly speaking, 
Klein-Gordon-Maxwell systems provide a dualistic model for the description of 
the interaction between a charged relativistic particle of matter and the electromagnetic field that it generates. The electromagnetic field is both generated by and drives the 
particle field. In the electrostatic form of the Klein-Gordon-Maxwell systems, writing the matter particle as a standing wave $ue^{i\omega t}$,  
it is characterized by the property that 
$u$ solves the electrostatic Klein-Gordon-Maxwell systems we investigate in this 
paper with a gauge potential $v$. 
In what follows we let $(M,g)$ be a smooth compact $3$-dimensional Riemannian manifold and $a > 0$ be a smooth positive 
function in $M$. Let $\omega_a$ be given by
\begin{equation}\label{DefOm0a}
\omega_a = \sqrt{\min_Ma}\hskip.1cm .
\end{equation}
Given real numbers $q > 0$, $\omega \in (-\omega_a,\omega_a)$, $\lambda \ge 0$, and $p \in (2,6]$, the 
electrostatic Klein-Gordon-Maxwell systems we investigate 
in this paper are written as
\begin{equation}\label{SWSyst}
\begin{cases}
\Delta_gu + au = u^{p-1} + \omega^2\left(qv-1\right)^2u\\
\Delta_gv + \left(\lambda+q^2u^2\right)v = qu^2\hskip.1cm ,
\end{cases}
\end{equation}
where $\Delta_g = -\hbox{div}_g\nabla$ is the Laplace-Beltrami operator. The system \eqref{SWSyst} is energy critical when $p = 6$, and 
subcritical when $p \in (2,6)$. In the 
classical physical setting, 
$a = m_0^2$, $m_0$ is the mass of the particle, the coercivity constant $\lambda = 0$, $q$ is the charge of the particle, $u$ is the field associated to the particle, 
$\omega$ is the temporal frequency (referred to as the phase in the sequel), and $v$ is the electric potential. The constant $\lambda$ in this paper can be interpreted in terms 
of the Maxwell-Proca theory (see Hebey and Truong \cite{HebTru}). 
In what follows we let $S_g$ stand for the scalar curvature of $g$. Also we let $\mathcal{S}_p(\omega)$ be 
the set consisting of the positive smooth solutions $\mathcal{U} = (u,v)$ 
of \eqref{SWSyst} with phase $\omega$ and nonlinear term $u^{p-1}$. Namely,
\begin{equation}\label{DefSp}
\mathcal{S}_p(\omega) = \Bigl\{(u,v)\hskip.1cm\hbox{smooth}\hskip.1cm\hbox{s.t.}\hskip.1cm u > 0, v > 0,\hskip.1cm\hbox{and}\hskip.1cm (u,v)
\hskip.1cm\hbox{solve}\hskip.1cm\eqref{SWSyst}\Bigr\}\hskip.1cm .
\end{equation}
Given $\omega \in [0,\omega_a)$, we let
\begin{equation}\label{DefKOm0}
K_0(\omega) = (-\omega_a,-\omega]\bigcup [\omega,\omega_a)\hskip.1cm, 
\end{equation}
where $\omega_a$ is as in \eqref{DefOm0a}. When $\omega = 0$, $K_0(0)$ is the interval 
$K_\varepsilon(0) = (-\omega_a,\omega_a)$. 
For $\theta \in (0,1)$, and $\mathcal{U} = (u,v)$, we let 
\begin{equation}\label{C2ThetaNorm}
\Vert\mathcal{U}\Vert_{C^{2,\theta}} = \Vert u\Vert_{C^{2,\theta}} + \Vert v\Vert_{C^{2,\theta}}\hskip.1cm .
\end{equation}
We recall that $(M,g)$ is said to be conformally diffeomorphic to the unit $3$-sphere $(S^3,g_0)$ 
if there exists a diffeomorphism $\varphi: S^3 \to M$ such that 
$\varphi^\star g = u^4g_0$ for some smooth positive function $u$ in $S^3$. 
We prove below the existence of smooth positive solutions and 
the existence of uniform bounds for \eqref{SWSyst} in the subcritical cases $p \in (2,6)$ without any conditions, and in the critical case $p =6$ 
assuming that the mass potential, balanced by the phase, 
is smaller than the geometric threshold potential of the conformal Laplacian. Our main result is as follows. Closely related estimates are derived 
in Theorem \ref{SecThm} in Section \ref{SecRes}.

\begin{thm}\label{MainThm} Let $(M,g)$ be a smooth compact $3$-dimensional Riemannian manifold and $a> 0$ be a smooth positive 
function in $M$. Let $q > 0$, 
$\omega \in (-\omega_a,\omega_a)$, $\lambda \ge 0$, and $p \in (2,6]$, where $\omega_a$ is as in \eqref{DefOm0a}. When $p = 6$ assume
\begin{equation}\label{MainAssumpt}
a \le k\lambda\omega^2 + \frac{1}{8}S_g
\end{equation}
in $M$ for some $k  > 0$ such that $k\lambda < 1$. Then \eqref{SWSyst} possesses a smooth positive solution. 
Moreover, for any $p \in (2,6)$, and any $\theta \in (0,1)$, there exists $C >0$ such that for any $\omega^\prime \in K_0(0)$, and 
any $\mathcal{U} \in \mathcal{S}_p(\omega^\prime)$, $\Vert\mathcal{U}\Vert_{C^{2,\theta}} \le C$, where $\mathcal{S}_p(\omega^\prime)$ 
is as in \eqref{DefSp}, $K_0(0)$ is as in \eqref{DefKOm0}, and $\Vert\cdot\Vert_{C^{2,\theta}}$ 
is as in \eqref{C2ThetaNorm}. Assuming again \eqref{MainAssumpt}, 
with the property that \eqref{MainAssumpt} is strict at least at one point if $(M,g)$ is conformally diffeomorphic to the unit $3$-sphere 
and $\omega\lambda = 0$, there also holds that for any $\theta \in (0,1)$, 
$\Vert\mathcal{U}\Vert_{C^{2,\theta}} \le C$ for all $\mathcal{U} \in \mathcal{S}_6(\omega^\prime)$ 
and all $\omega^\prime \in K_0(\omega)$, where $C > 0$ does not depend on $\omega^\prime$ and $\mathcal{U}$.
\end{thm}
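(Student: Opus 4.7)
The plan is to reduce both claims to a scalar problem via the one-way coupling in \eqref{SWSyst}. For every $u \in H^1(M)$ the second equation has a unique positive smooth solution $v_u$, and evaluating at a maximum of $v$ gives $0 < qv_u \le 1$ throughout $M$. The assignment $u \mapsto v_u$ is smooth, and elliptic regularity promotes an $L^\infty$ control of $u$ into a $C^{2,\theta}$ control of $v_u$, so the whole theorem reduces to controlling the $u$-component. The first equation rewrites as $\Delta_g u + h_u\, u = u^{p-1}$ with $h_u = a - \omega^2(qv_u-1)^2 \ge a - \omega^2 > 0$, i.e.\ as a Yamabe-type scalar equation with a mildly nonlocal, uniformly positive potential.

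For existence, I would study the reduced functional on $H^1(M)$ obtained by substituting $v = v_u$; its critical points are exactly the $u$-components of positive solutions of \eqref{SWSyst}. Since $(qv_u-1)^2 \le 1$, the phase term is a bounded perturbation of the mass, so the functional has standard mountain-pass geometry for every $p \in (2,6]$. When $p \in (2,6)$, Palais--Smale holds and mountain pass produces a smooth positive solution. When $p = 6$, the obstruction is to force the mountain-pass level strictly below the critical Sobolev threshold on $\rn$. Testing with a rescaled Aubin bubble centered at a favorable point $x_0$, the decisive coefficient in the expansion is a combination of $a(x_0)$, $\omega^2(qv(x_0)-1)^2$, the $\lambda$-contribution from the $v$-equation, and $\tfrac18 S_g(x_0)$; the hypothesis \eqref{MainAssumpt} with $k\lambda<1$ is precisely calibrated to make this coefficient non-positive, and the dimension-$3$ positive mass theorem upgrades non-positivity to a strict gain except in the conformal class of $(S^3,g_0)$, where the strict form of \eqref{MainAssumpt} closes the gap.

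For the a priori $C^{2,\theta}$ bounds, I would argue by contradiction on a sequence $(u_\alpha, v_\alpha) \in \mathcal{S}_p(\omega_\alpha')$ with $\|u_\alpha\|_{L^\infty} \to \infty$. The potential $h_\alpha = a - (\omega_\alpha')^2(qv_\alpha-1)^2$ is uniformly bounded. Locating concentration points $\xia \to x_i$ with scales $\mia = u_\alpha(\xia)^{-2/(p-2)} \to 0$ and rescaling, the first equation on $\rn$ converges to $\Delta U = U^{p-1}$. In the subcritical range $p \in (2,6)$, this has no positive solution by Gidas--Spruck, which excludes blow-up and yields the bounds uniformly over $\omega' \in K_0(0)$.

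The main obstacle is the critical case $p = 6$, where the standard bubble $U(x) = (1+|x|^2/3)^{-1/2}$ is a genuine limit. Here I would run the full blow-up machinery: a sharp pointwise bubble decomposition for $u_\alpha$, interaction estimates forcing blow-up to be isolated and simple, and a Pohozaev identity on small geodesic balls around each concentration point. In dimension $3$ the leading $\mia$-independent term of that identity is the local mass at $x_i$ of a limit linear operator whose zero-order part combines the limit $h_\infty$ of $h_\alpha$, a $\lambda$-dependent contribution coming from the coupling, and the subtraction of $\tfrac18 S_g$. The constraint $|\omega_\alpha'| \ge \omega$ imposed by $\omega_\alpha' \in K_0(\omega)$ keeps the phase non-degenerate in the limit; combined with \eqref{MainAssumpt}, $k\lambda<1$, and the positive mass theorem, it forces the effective mass to the wrong sign, producing the desired contradiction---with the conformally spherical case handled by the extra strict-inequality assumption. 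The $L^\infty$ bound on $u_\alpha$ then follows, and elliptic regularity applied to both equations closes the announced $C^{2,\theta}$ estimate.
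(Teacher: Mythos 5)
Your proposal follows essentially the same route as the paper: the Benci--Fortunato reduction $u\mapsto v_u$ with $0\le qv_u\le \frac1q\cdot q=1$, a mountain-pass argument whose critical level is pushed below $\frac{1}{3K_3^3}$ by phase compensation (the bubbles force $v_u\to 0$, so the effective potential becomes $a-\omega^2$, which \eqref{MainAssumpt} with $k\lambda<1$ places below $\frac18 S_g$) together with Schoen's test functions and the positive mass theorem, Gidas--Spruck for the subcritical a priori bounds, and isolated blow-up plus a Pohozaev identity and positive mass for the critical bounds. The only caveats are technical and match choices the paper also has to make explicit: $v_u$ is only well defined for all $u\in H^1$ when $\lambda>0$ (the case $\lambda=0$ is handled separately with $v\equiv 1/q$), and in the critical case the paper realizes the mountain-pass solution as a limit of subcritical solutions $u_p$ as $p\to 6$ rather than verifying Palais--Smale directly at $p=6$.
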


There are several consequences to our theorem. The first obvious one is that solutions in the subcritical case exist for all phases and are uniformly bounded 
in $C^{2,\theta}$. For the sake of clearness we restate this result in the following corollary.

\begin{cor}[Subcritical Case]\label{Cor1} Let $(M,g)$ be a smooth compact $3$-dimensional Riemannian manifold and 
$a > 0$ be a smooth positive function in $M$. Let $q > 0$, $\lambda \ge 0$, and $p \in (2,6)$. 
For any $\omega \in (-\omega_a,\omega_a)$ there exists a smooth positive solution of 
\eqref{SWSyst}. Moreover, for any $\theta \in (0,1)$, there exists $C > 0$ such that 
$\Vert\mathcal{U}\Vert_{C^{2,\theta}} \le C$
for all $\omega \in (-\omega_a,\omega_a)$, and all $\mathcal{U} \in \mathcal{S}_p(\omega)$.
\end{cor}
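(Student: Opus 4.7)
The plan is to deduce the corollary directly from Theorem \ref{MainThm} by specialization to the subcritical regime. For the existence claim I will invoke Theorem \ref{MainThm} with the given $\omega \in (-\omega_a,\omega_a)$ and $p \in (2,6)$; since the hypothesis \eqref{MainAssumpt} is imposed only in the energy-critical case $p=6$, the theorem immediately produces a smooth positive solution of \eqref{SWSyst} with no additional assumption on $a$, $q$, or $\lambda$, which is exactly the first assertion.

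For the uniform $C^{2,\theta}$ bound I will apply the subcritical a priori estimate contained in Theorem \ref{MainThm}, taking there the auxiliary phase parameter to be $0$. By definition \eqref{DefKOm0}, $K_0(0) = (-\omega_a,\omega_a)$ coincides with the entire open interval of admissible phases, so the estimate furnished by the theorem — uniform over $\omega^\prime \in K_0(0)$ and over $\mathcal{U} \in \mathcal{S}_p(\omega^\prime)$ — is exactly the statement of the corollary. No further argument is needed to pass from one to the other.

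The only real obstacle here is hidden in Theorem \ref{MainThm} itself, so the corollary is essentially a repackaging. I do not anticipate any additional analytic difficulty in this deduction: Corollary \ref{Cor1} is isolated precisely to emphasize that, in the subcritical regime, neither the geometric smallness assumption \eqref{MainAssumpt} nor any restriction on the phase is needed. All the substantive work — the blow-up analysis yielding the a priori bound and the continuation or variational scheme yielding existence — is carried out in the proof of the main theorem, and the corollary merely collects the pertinent conclusions for the case $p \in (2,6)$.
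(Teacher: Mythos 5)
Your deduction is correct and is exactly how the paper treats Corollary \ref{Cor1}: the existence claim is the $p\in(2,6)$ case of Theorem \ref{MainThm}, where hypothesis \eqref{MainAssumpt} is not required, and the uniform bound is the subcritical a priori estimate of the theorem with $K_0(0)=(-\omega_a,\omega_a)$. The paper gives no separate argument either, presenting the corollary as a restatement of the relevant conclusions of the main theorem.
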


Two notable consequences of the theorem concern the critical case where it holds that $p = 6$. Assuming $\lambda > 0$, 
the first consequence we discuss, which provides a perfect illustration of phase compensation, 
is that if the oscillation of $a$, given by $\hbox{Osc}(a) = \max_Ma-\min_Ma$, is not too large, 
then there always are solutions of our system for sufficiently large phases and such solutions are again uniformly bounded in 
$C^{2,\theta}$. 

\begin{cor}[Critical Case 1]\label{Cor2} Let $(M,g)$ be a smooth compact $3$-dimensional Riemannian manifold and 
$a > 0$ be a smooth positive function in $M$. Let $q > 0$ and $\lambda > 0$. Suppose $\hbox{Osc}(a) < \frac{1}{8}\min_MS_g$. Then 
there exists $\varepsilon > 0$ such that for any $\omega \in K_0(\omega_a-\varepsilon)$, 
\eqref{SWSyst} possesses a smooth positive solution when $p = 6$. Moreover, 
for any $\theta \in (0,1)$, there exists $C > 0$ such that 
$\Vert\mathcal{U}\Vert_{C^{2,\theta}} \le C$
for all $\omega \in K_0(\omega_a-\varepsilon)$, and all $\mathcal{U} \in \mathcal{S}_6(\omega)$.
\end{cor}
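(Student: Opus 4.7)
The plan is to reduce this corollary directly to Theorem \ref{MainThm} in the critical case $p=6$, by choosing parameters $k > 0$ and $\varepsilon > 0$ such that the phase-compensation condition \eqref{MainAssumpt} is satisfied for every $\omega \in K_0(\omega_a-\varepsilon)$. All the analytic work has been done in the theorem; what remains is a short bookkeeping argument converting the oscillation hypothesis into the pointwise inequality \eqref{MainAssumpt}.

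First I would extract a quantitative gap from the hypothesis $\hbox{Osc}(a) < \frac{1}{8}\min_M S_g$: there exists $\eta > 0$ such that $\max_M a \le \omega_a^2 - \eta + \frac{1}{8}\min_M S_g$, which in turn gives the pointwise bound $a \le \omega_a^2 - \eta + \frac{1}{8}S_g$ on $M$ (recalling $\omega_a^2 = \min_M a$). The strategy is then to replace the constant $\omega_a^2 - \eta$ by a term of the form $k\lambda\omega^2$ with $k\lambda < 1$, so that \eqref{MainAssumpt} holds globally.

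Next I would pick $k > 0$ with $k\lambda < 1$ but $k\lambda\omega_a^2 > \omega_a^2 - \eta$; any $k$ satisfying $k\lambda \in \bigl(1 - \eta/\omega_a^2,\, 1\bigr)$ works, and such $k$ exists precisely because $\eta > 0$. By continuity of $\omega \mapsto k\lambda\omega^2$ near $|\omega| = \omega_a$, there is $\varepsilon \in (0,\omega_a)$ such that $k\lambda\omega^2 \ge \omega_a^2 - \eta$ for every $\omega$ with $|\omega| \ge \omega_a - \varepsilon$. Combined with the pointwise bound on $a$, this gives $a \le k\lambda\omega^2 + \frac{1}{8}S_g$ on all of $M$ for every $\omega \in K_0(\omega_a - \varepsilon)$, which is exactly \eqref{MainAssumpt}.

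Finally I would invoke Theorem \ref{MainThm} with this choice of $(k,\varepsilon)$. Because $\lambda > 0$ by hypothesis and $|\omega| \ge \omega_a - \varepsilon > 0$, we have $\omega\lambda \neq 0$, so the supplementary strict-inequality clause required when $(M,g)$ is conformally diffeomorphic to $(S^3,g_0)$ and $\omega\lambda = 0$ is vacuous in our setting. Existence of a smooth positive solution of \eqref{SWSyst} and the uniform $C^{2,\theta}$-bound on $\mathcal{S}_6(\omega)$ for $\omega \in K_0(\omega_a - \varepsilon)$ then follow immediately. There is no real obstacle here — the only delicate point is the matching of quantitative thresholds in the chain $\hbox{Osc}(a) < \frac{1}{8}\min_M S_g \Rightarrow k\lambda < 1 \Rightarrow |\omega| \ge \omega_a - \varepsilon$, and the substantive content, namely the phase compensation mechanism at $p=6$, is already encapsulated in the main theorem.
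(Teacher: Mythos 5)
Your proposal is correct and follows exactly the route the paper intends: the corollary is stated as an immediate consequence of Theorem \ref{MainThm}, and your bookkeeping (extracting the gap $\eta$ from $\hbox{Osc}(a)<\frac{1}{8}\min_MS_g$, choosing $k\lambda\in(1-\eta/\omega_a^2,1)$, and shrinking $\varepsilon$ so that $k\lambda\omega^2\ge\omega_a^2-\eta$ on $K_0(\omega_a-\varepsilon)$) is precisely the implicit derivation, including the observation that $\omega\lambda\neq 0$ makes the sphere caveat vacuous. No issues.
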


The last consequence of the theorem we discuss, still dealing with the critical case where $p = 6$, 
concerns the more restrictive case where $a\le \frac{1}{8}S_g$. In this case, when $a$ is not too large, 
we get that there are solutions for all phases and that such solutions are uniformly bounded in 
$C^{2,\theta}$ for all phases.

\begin{cor}[Critical Case 2]\label{Cor3} Let $(M,g)$ be a smooth compact $3$-dimensional Riemannian manifold and 
$a > 0$ be a smooth positive function in $M$. Let $q > 0$ and $\lambda \ge 0$. Suppose $a \le \frac{1}{8}S_g$, 
the inequality being strict at least at one point in case the manifold is conformally diffeomorphic to the unit $3$-sphere. 
For any $\omega \in (-\omega_a,\omega_a)$ there exists a smooth positive solution of 
\eqref{SWSyst} when $p = 6$. Moreover, for any $\theta \in (0,1)$, there exists $C > 0$ such that 
$\Vert\mathcal{U}\Vert_{C^{2,\theta}} \le C$
for all $\omega \in (-\omega_a,\omega_a)$, and all $\mathcal{U} \in \mathcal{S}_6(\omega)$.
\end{cor}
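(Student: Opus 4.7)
The plan is to deduce both assertions of Corollary \ref{Cor3} as direct specializations of Theorem \ref{MainThm}, by choosing the free parameters $\omega$ and $k$ in \eqref{MainAssumpt} appropriately.

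For the existence part, I would fix a target phase $\omega\in(-\omega_a,\omega_a)$ and pick any $k>0$ with $k\lambda<1$ (any positive $k$ works when $\lambda=0$, and e.g.\ $k=1/(2(1+\lambda))$ works in general). The standing hypothesis $a\le \tfrac{1}{8}S_g$ then yields
\begin{equation*}
a\ \le\ \tfrac{1}{8}S_g\ \le\ k\lambda\omega^2+\tfrac{1}{8}S_g,
\end{equation*}
so \eqref{MainAssumpt} holds at this $\omega$ and the existence half of Theorem \ref{MainThm} produces a smooth positive solution of \eqref{SWSyst} at that phase.

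For the uniform $C^{2,\theta}$ bound, I would instead apply the compactness half of Theorem \ref{MainThm} with the parameter $\omega$ frozen at $0$. With this choice $K_0(0)=(-\omega_a,\omega_a)$, and \eqref{MainAssumpt} collapses to $a\le \tfrac{1}{8}S_g$, which is exactly the Corollary's hypothesis. Since $\omega\lambda=0$ in this specialization, the sphere clause of Theorem \ref{MainThm} demands that the inequality be strict at some point when $(M,g)$ is conformally diffeomorphic to $(S^3,g_0)$, which is precisely the additional assumption Corollary \ref{Cor3} postulates in that case. Theorem \ref{MainThm} then delivers, for every $\theta\in(0,1)$, a constant $C>0$ such that $\Vert\mathcal{U}\Vert_{C^{2,\theta}}\le C$ for every $\omega'\in K_0(0)=(-\omega_a,\omega_a)$ and every $\mathcal{U}\in\mathcal{S}_6(\omega')$, which is the desired bound.

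There is essentially no obstacle beyond bookkeeping: all of the delicate analysis — the blow-up analysis, the phase compensation argument, and the use of the positive mass theorem — has been absorbed into Theorem \ref{MainThm}. The only point worth double-checking is that in the limiting regime $\omega=0$ the sphere clause is correctly triggered by $\omega\lambda=0$, which explains why Corollary \ref{Cor3} must carry the strict inequality requirement on $S^3$.
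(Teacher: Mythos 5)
Your proposal is correct and is exactly the derivation the paper intends: Corollary \ref{Cor3} is stated as a direct consequence of Theorem \ref{MainThm}, with no separate proof given, and your two specializations (any admissible $k$ for existence at each fixed phase, and $\omega=0$ so that $K_0(0)=(-\omega_a,\omega_a)$ and \eqref{MainAssumpt} reduces to $a\le\frac{1}{8}S_g$ with the sphere clause triggered by $\omega\lambda=0$) are precisely the right bookkeeping.
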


\medskip As an immediate consequence of the $C^{2,\theta}$-bounds in the above results 
we obtain phase stability for standing waves of the Klein-Gordon-Maxwell equations in electrostatic form. 
Standing waves for the Klein-Gordon-Maxwell equations in electrostatic form can 
be written as $S = ue^{i\omega t}$ and they are coupled with a gauge potential $v$, where 
$(u,v)$ solves \eqref{SWSyst}. Roughly speaking, phase stability means that for arbitrary sequences 
of standing waves $u_\alpha e^{i\omega_\alpha t}$, with gauge potentials $v_\alpha$, the convergence of the 
phase $\omega_\alpha$ in $\mathbb{R}$ implies the convergence of the amplitude $u_\alpha$ and of the gauge $v_\alpha$ 
in the $C^2$-topology. 
In the subcritical case, it follows from Corollary \ref{Cor1} that 
for any sequence 
$(\omega_\alpha)_\alpha$, $\alpha \in \mathbb{N}$, 
and for any sequence of standing waves $(x,t) \to u_\alpha(x)e^{i\omega_\alpha t}$, 
with gauge potentials $v_\alpha$, 
if $\omega_\alpha \to \omega$ 
as $\alpha \to +\infty$ and $\vert\omega\vert < \omega_a$, then, up to a subsequence, 
$u_\alpha \to u$ and $v_\alpha \to v$ in $C^2$ as $\alpha \to +\infty$ for some smooth functions $u$ and $v$. By Lemma 
 \ref{LemNonZer}, $u$ and $v$ are positive and they give rise to another standing wave $ue^{i\omega t}$ with gauge potential $v$. 
 In particular, phase stability in the 
subcritical case holds true without any condition. By Corollary \ref{Cor3}, phase stability remains true in the 
critical case where $p = 6$ if we assume that $a\le \frac{1}{8}S_g$ with the property that the inequality is strict 
at least at one point if $(M,g)$ is conformally diffeomorphic 
to the unit $3$-sphere. By Corollary \ref{Cor2}, assuming $\lambda > 0$ and $\hbox{Osc}(a) < \frac{1}{8}\min_MS_g$, 
phase stability also holds true if $\vert\omega\vert < \omega_a$ is sufficiently large such that $a < \omega^2 + \frac{1}{8}S_g$. As 
a remark, phase stability prevents the existence of arbitrarily large amplitude standing waves (see Corollary \ref{Cor4} in Section \ref{SectBackGrd}).

\medskip As a remark it follows from our proofs that the bounds in Corollary \ref{Cor1} and Corollary \ref{Cor3} are uniform with respect to $\lambda$ 
as long as $\lambda$ stays bounded. In particular, they are uniform with respect to $\lambda$ as $\lambda \to 0$. 

\medskip Let $(S^3,g_0)$ be the unit $3$-sphere. Let $p = 6$. We have that 
$S_{g_0} \equiv 6$. By our theorem we then get that there are $C^{2,\theta}$-bounds for \eqref{SWSyst} 
in the unit 
sphere as soon 
as $a \le 3/4$ and the inequality is strict for at least one point in the manifold. On the other hand, by the noncompactness of the conformal group of $(S^3,g_0)$, 
such bounds do not exist anymore when $a = 3/4$ and $\lambda = 0$. In particular, 
when $a = 3/4$ and $\lambda = 0$, there are sequences of 
solutions $(u_\alpha,\frac{1}{q})$ of \eqref{SWSyst} in $(S^3,g_0)$ with $p = 6$, 
$\alpha \in \mathbb{N}$, which are such that $u_\alpha \rightharpoonup 0$ weakly in $H^1$ but 
not strongly as $\alpha \to +\infty$. Because of 
the noncompactness of the conformal group of $(S^3,g_0)$, the $C^{2,\theta}$-bound for \eqref{SWSyst} when $p = 6$ does not hold true 
in general when we assume the sole \eqref{MainAssumpt}.

\medskip In section \ref{SectBackGrd} we discuss the relation which exists between \eqref{SWSyst}, the Klein-Gordon-Maxwell 
equations, and the Maxwell equations.  A related result to Theorem \ref{MainThm} is presented in Section \ref{SecRes} when we do not 
assume a sign on the scalar curvature of the background metric (implicitly required in Theorem \ref{MainThm} when $p=6$) but ask for the 
potential, balanced by the phase, to be very small (in a non-quantified sense). 
We prove Theorem \ref{MainThm} in sections \ref{Sect1} to \ref{Sect3}. The existence part in the theorem is proved in Section \ref{Sect1}. The 
$C^{2,\theta}$-bound in the subcritical case $p \in (2,6)$ is established in Section \ref{Sect2}. The more delicate $C^{2,\theta}$-bound in the critical 
case $p = 6$ is established in Sections \ref{blowupest} and \ref{Sect3}. Theorem \ref{SecThm} of Section \ref{SecRes} is proved in Section  
\ref{PrfThm2} using the blow-up analysis developed in Sections \ref{blowupest} and \ref{Sect3}.

\section{Action interpretation of the system and the Maxwell equations}\label{SectBackGrd}

We illustrate the background action functional related to our problem and the relation which holds between \eqref{SWSyst}, 
the Maxwell equations, and the Klein-Gordon-Maxwell equations. 
The model we discuss is a model describing the interactions between matter and electromagnetic fields 
established, see, for instance, Benci and Fortunato \cite{BenFor0}, by means of Abelian gauge theories. Formally, the 
ordinary derivatives $\partial_t$ and $\nabla$ in the Klein-Gordon total functional 
are replaced by gauge covariant derivatives given by the rules $\partial_t \to \partial_t + iq\varphi$ and 
$\nabla \to \nabla - iqA$. 
Let $(M,g)$ be a smooth compact Riemannian $3$-manifold, $a >0$ be a smooth positive function in $M$, $\lambda \ge 0$, and $q > 0$. 
We define the Lagrangian densities $\mathcal{L}_0$ and $\mathcal{L}_1$ of $\psi$, $\varphi$, and $A$ by
\begin{equation}\label{EqtLagAct1}
\begin{split}
&\mathcal{L}_0(\psi,\varphi,A) = \frac{1}{2}\left\vert(\frac{\partial}{\partial t} + iq\varphi)\psi\right\vert^2 - \frac{1}{2}\left\vert(\nabla - iqA)\psi\right\vert^2\hskip.1cm\hbox{and}\\
&\mathcal{L}_1(\varphi,A) = \frac{1}{2}\left\vert\frac{\partial A}{\partial t} + \nabla\varphi\right\vert^2 + \frac{\lambda}{2}\vert\varphi\vert^2 - \frac{1}{2}\vert\nabla\times A\vert^2\hskip.1cm ,
\end{split}
\end{equation}
where $\nabla\times$ denotes the curl operator defined thanks to the Hodge dual $\star$ when $M$ is orientable. 
In this model, $\psi$ is a matter field, $(A,\varphi)$ are gauge potentials representing the electromagnetic field $(E,H)$ it generates as in 
\eqref{EqtLagAct6}, $q$ is a nonzero coupling constant, representing the electric charge, and $\lambda \ge 0$ is a coercivity constant which generates phase 
compensation. Then $\mathcal{L}_1$ is a $0$-order perturbation, by $\frac{\lambda}{2}\varphi^2$, 
of the standard electromagnetic Lagrangian density
\begin{eqnarray*}
\mathcal{L}_1^0 &=& 
\frac{1}{2} \left(\left\vert\frac{\partial A}{\partial t} + \nabla\varphi\right\vert^2 - \left\vert\nabla\times A\right\vert^2\right)\\
&=& \frac{1}{2}\left(\vert E\vert^2 - \vert H\vert^2\right)
\end{eqnarray*}
associated to the electromagnetic field $(E,H)$ given by \eqref{EqtLagAct6}. 
We let $\mathcal{S}$ be the total action functional for $\psi$, $\varphi$, $A$ defined by
\begin{equation}\label{EqtLagAct2}
\mathcal{S}(\psi,\varphi,A) = \int\int \left(\mathcal{L}_0 + \mathcal{L}_1 - W\right) dv_gdt\hskip.1cm ,
\end{equation}
where  $\mathcal{L}_0$ and $\mathcal{L}_1$ are as in \eqref{EqtLagAct1}, and $W$ is a function of $\psi$ given by
\begin{equation}\label{EqtLagAct3}
W(\psi) = \frac{a}{2}\vert\psi\vert^2 - \frac{1}{p}\vert\psi\vert^{p}\hskip.1cm .
\end{equation}
Writing $\psi$ in polar form as
$$\psi(x,t) = u(x,t)e^{iS(x,t)}$$
for $u \ge 0$ and $S \in \mathbb{R}\backslash 2\pi\mathbb{Z}$, the total action functional $\mathcal{S}$ given by \eqref{EqtLagAct2} is written as
\begin{equation*}
\begin{split}
\mathcal{S}(u,S,\varphi,A) &= \frac{1}{2}\int\int\left(\left(\frac{\partial u}{\partial t}\right)^2 - \vert\nabla u\vert^2 - au^2\right)dv_gdt  + \frac{1}{p}\int\int u^{p}dv_gdt\\
&+ \frac{1}{2}\int\int \left(\left(\frac{\partial S}{\partial t} + q\varphi\right)^2 - \vert\nabla S-qA\vert^2\right)u^2dv_gdt\\
&+ \frac{1}{2} \int\int \left(\left\vert\frac{\partial A}{\partial t} + \nabla\varphi\right\vert^2 + \lambda\varphi^2 - \left\vert\nabla\times A\right\vert^2\right)dv_gdt\hskip.1cm .
\end{split}
\end{equation*}
Taking the variation of $\mathcal{S}$ with respect to $u$, $S$, $\varphi$, and $A$, we get four equations which are written as
\begin{equation}\label{EqtLagAct5}
\begin{cases}
\frac{\partial^2u}{\partial t^2} + \Delta_gu + au = u^{p-1} + \left(\left(\frac{\partial S}{\partial t} + q\varphi\right)^2 - \vert\nabla S-qA\vert^2\right)u\\
\frac{\partial}{\partial t}\left(\left(\frac{\partial S}{\partial t} + q\varphi\right)u^2\right) - \nabla.\left(\left(\nabla S - qA\right)u^2\right) = 0\\
-\nabla.\left(\frac{\partial A}{\partial t} + \nabla\varphi\right) + \lambda\varphi + q\left(\frac{\partial S}{\partial t} + q\varphi\right)u^2 = 0\\
\nabla\times\left(\nabla\times A\right) + \frac{\partial}{\partial t}\left(\frac{\partial A}{\partial t} + \nabla\varphi\right) = q\left(\nabla S - qA\right)u^2\hskip.1cm .
\end{cases}
\end{equation}
Now let
\begin{equation}\label{EqtLagAct6}
\begin{split}
&E = - \left(\frac{\partial A}{\partial t} + \nabla\varphi\right)\hskip.1cm ,\hskip.1cm H = \nabla\times A\hskip.1cm ,\\
&\rho = - \left(\frac{\partial S}{\partial t} + q\varphi\right)qu^2 - \lambda\varphi
\hskip.1cm ,\hskip.1cm\hbox{and}\hskip.1cm j = \left(\nabla S - qA\right)qu^2\hskip.1cm .
\end{split}
\end{equation}
Then the two last equations in \eqref{EqtLagAct5} give rise to the second couple of the Maxwell equations with respect to a matter distribution 
whose charge and current density are respectively $\rho$ and $j$, namely
\begin{eqnarray}\label{EqtLagAct7}
&&\nabla.E = \rho\label{EqtLagAct7E1}\hskip.1cm\hbox{and}\\
&&\nabla\times H - \frac{\partial E}{\partial t} = j\hskip.1cm ,\label{EqtLagAct7E2}
\end{eqnarray}
while the two first equations in \eqref{EqtLagAct6} give rise to the first couple of the Maxwell equations:
\begin{eqnarray}\label{EqtLagAct8}
&&\nabla\times E + \frac{\partial H}{\partial t} = 0\hskip.1cm\hbox{and}\label{EqtLagAct8E1}\\
&&\nabla. H = 0\hskip.1cm .\label{EqtLagAct8E2}
\end{eqnarray}
In addition, the first equation in \eqref{EqtLagAct5} gives rise to the matter equation
\begin{equation}\label{EqtLagAct9}
\frac{\partial^2u}{\partial t^2} + \Delta_gu + au = u^{p-1} + \frac{\kappa}{q^2u^3}\hskip.1cm ,
\end{equation}
where $\kappa = (\rho + \lambda\varphi)^2-j^2$, while the second equation in \eqref{EqtLagAct5} gives rise to the charge continuity equation
\begin{equation}\label{EqtLagAct10}
\frac{\partial\rho}{\partial t} + \nabla.j = 0
\end{equation}
if we assume that $\varphi = \varphi(x)$. In particular, we recover with \eqref{EqtLagAct5} the Maxwell equations 
\eqref{EqtLagAct7E1}--\eqref{EqtLagAct8E2} together with a Klein-Gordon type equation 
\eqref{EqtLagAct9}. 
The system  \eqref{EqtLagAct5} is referred to as the Klein-Gordon-Maxwell system. 
Suppose now that $S = -\omega t$, $\omega$ real, that $u = u(x)$, and that $A = 0$. Then we are in the electrostatic form of the above equations and we search for 
standing waves for these equations. 
In such a setting, the second and the fourth equations in \eqref{EqtLagAct5} are automatically satisfied, while the first and the third equations in \eqref{EqtLagAct5} become
\begin{equation}\label{EqtLagAct11}
\begin{cases}
\Delta_gu + au = u^{p-1} + (q\varphi-\omega)^2u    \\
\Delta_g\varphi + \lambda\varphi + q(q\varphi-\omega)u^2 = 0\hskip.1cm .
\end{cases}
\end{equation}
In particular, letting $\varphi = \omega v$, we recover our original system \eqref{SWSyst}. 

\medskip A direct consequence of our result concerns the amplitude of standing waves 
for the electrostatic form of the Klein-Gordon-Maxwell system \eqref{EqtLagAct5}. It illustrates the idea that 
phase stability prevents the existence of arbitrarily large amplitude standing waves. 
More precisely, the following corollary is a direct consequence of the results stated in the introduction. 

\begin{cor}[On the amplitude of standing waves]\label{Cor4} Let $(M,g)$ be a smooth compact $3$-dimensional Riemannian manifold and 
$a > 0$ be a smooth positive function in $M$. Let $q > 0$ and $p \in (2,6]$. Assume that one of the three following 
assumptions hold true:\par
(i) $\lambda \ge 0$, $p < 6$, or\par
(ii) $\lambda > 0$, $p=6$, $\hbox{Osc}(a) < \frac{1}{8}\min_MS_g$, or\par
(iii) $\lambda \ge 0$, $p=6$, $a \le \frac{1}{8}S_g$ and the inequality is strict at one point if $(M,g)$\par
is conformally diffeomorphic to the unit $3$-sphere.\\
\noindent Then, for any $\theta \in (0,1)$, there exists $C > 0$ such that $\Vert u\Vert_{C^{2,\theta}} \le C$ for all standing waves $ue^{-i\omega t}$ of \eqref{EqtLagAct5} in its electrostatic form $A = 0$, 
all $\omega \in (-\omega_a,\omega_a)$ in case (i), all $\omega \in K_0(\omega_a-\varepsilon)$ in case (ii), where $\varepsilon > 0$ is suitably chosen, and all 
$\omega \in (-\omega_a,\omega_a)$ in case (iii). Moreover, there also holds that 
$\Vert\varphi\Vert_{C^{2,\theta}} \le C$ for the gauge potential $\varphi$.
\end{cor}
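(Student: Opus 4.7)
The plan is to reduce the statement directly to Corollaries \ref{Cor1}, \ref{Cor2}, and \ref{Cor3} via the substitution $\varphi = \omega v$ identified at the end of Section \ref{SectBackGrd}. Every standing wave $ue^{-i\omega t}$ of the Klein-Gordon-Maxwell system \eqref{EqtLagAct5} in its electrostatic form $A = 0$, with gauge potential $\varphi$ and positive amplitude $u$, is by construction a solution $(u,\varphi)$ of \eqref{EqtLagAct11}. For $\omega \neq 0$ I would set $v = \varphi/\omega$, so that $(u,v)$ solves \eqref{SWSyst} with phase $\omega$. To conclude that $(u,v) \in \mathcal{S}_p(\omega)$ it suffices to check $v > 0$. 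This follows from the strong maximum principle applied to the second equation of \eqref{EqtLagAct11}, rewritten as $\Delta_g\varphi + (\lambda + q^2u^2)\varphi = q\omega u^2$: the coefficient $\lambda + q^2u^2$ is strictly positive since $u > 0$, and the right-hand side has the sign of $\omega$, so $\varphi$ inherits that sign and $v = \varphi/\omega > 0$.

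With the identification $(u,\varphi)\leftrightarrow (u,v)$ in hand, each case of the corollary reduces to an already-proven result. In case (i), with $p \in (2,6)$, Corollary \ref{Cor1} yields a uniform bound $\|u\|_{C^{2,\theta}} + \|v\|_{C^{2,\theta}} \leq C$ for $\omega \in (-\omega_a,\omega_a)\setminus\{0\}$; in case (ii), Corollary \ref{Cor2} does the same for $\omega \in K_0(\omega_a-\varepsilon)$, a set that excludes a neighborhood of $0$ once $\varepsilon < \omega_a$; in case (iii), Corollary \ref{Cor3} again covers $\omega \in (-\omega_a,\omega_a)\setminus\{0\}$. In each case one has $\|\varphi\|_{C^{2,\theta}} = |\omega|\cdot\|v\|_{C^{2,\theta}} \leq \omega_a C$, which gives the desired $C^{2,\theta}$ bound on $\varphi$ as well.

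The only remaining case is $\omega = 0$, which appears in (i) and (iii) and at which the substitution $v = \varphi/\omega$ degenerates. Here the second equation of \eqref{EqtLagAct11} reduces to $\Delta_g\varphi + (\lambda + q^2u^2)\varphi = 0$, and testing against $\varphi$ yields $\int_M \left(|\nabla\varphi|^2 + (\lambda + q^2u^2)\varphi^2\right) dv_g = 0$, forcing $\varphi \equiv 0$; the bound on $\varphi$ is then trivial. To bound $u$ I would pair it with the unique smooth positive solution $v$ of the linear equation $\Delta_g v + (\lambda + q^2u^2)v = qu^2$, whose existence and positivity follow from Fredholm theory and the strong maximum principle, so that $(u,v) \in \mathcal{S}_p(0)$, and then apply Corollary \ref{Cor1} or Corollary \ref{Cor3}. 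No new blow-up analysis is required; the entire argument is the dictionary $\varphi \leftrightarrow \omega v$ together with a sign check and the separate treatment of $\omega = 0$, and the only mildly delicate point, the degenerate value, is handled by this dedicated linear argument.
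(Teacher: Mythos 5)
Your proposal is correct and is exactly the reduction the paper intends: Corollary \ref{Cor4} is stated there as a direct consequence of Corollaries \ref{Cor1}--\ref{Cor3} via the dictionary $\varphi=\omega v$ established at the end of Section \ref{SectBackGrd}, which is what you carry out. Your additional care with the sign of $\varphi$ (so that $(u,\varphi/\omega)\in\mathcal{S}_p(\omega)$) and with the degenerate value $\omega=0$, where $\varphi\equiv 0$ and $u$ is paired with $\Phi(u)$, correctly fills in details the paper leaves implicit.
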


Klein-Gordon-Maxwell systems have been investigated by  
Bechouche, Mauser and Selberg \cite{BecMauSel}, 
Choquet-Bruhat \cite{ChoBru}, 
Deumens \cite{Deu}, 
Eardley and Moncrief \cite{EarMon}, 
Klainerman and Machedon \cite{KlaMac}, 
Machedon and Sterbenz \cite{MacSte}, 
Masmoudi and Nakanishi \cite{MasNak1, MasNak2}, 
Petrescu \cite{Pet}, 
Rodnianski and Tao \cite{RodTao}, and 
Tao \cite{Tao1}.

\medskip Existence of solutions and semiclassical limits for systems like \eqref{SWSyst}, in 
Euclidean space, for subcritical nonlinear terms, have  
been investigated by  
Ambrosetti and Ruiz \cite{AmbRui}, 
D'Aprile and Mugnai \cite{AprMug1,AprMug2}, 
D'Aprile and Wei \cite{AprWei1,AprWei2}, 
D'Avenia and Pisani \cite{AvePis}, 
D'Avenia, Pisani and Siciliano \cite{AvePisSic,AvePisSic2}, 
Azzollini, D'Avenia and Pomponio \cite{AzzAvePom}, 
Azzollini and Pomponio \cite{AzzPom1,AzzPom2}, 
Benci and Fortunato \cite{BenFor0,BenFor2,BenFor3}, 
Bonanno \cite{Bon}, 
Cassani \cite{Cas}, 
Ianni and Vaira \cite{IanVai}, 
Long \cite{Lon}, 
Mugnai \cite{Mug},  
and Ruiz \cite{Rui}.

\medskip Existence and nonexistence of a priori estimates for 
critical elliptic Schr\"odinger equations on manifolds 
have been investigated by Berti-Malchiodi \cite{BerMal}, 
Brendle \cite{Bre,BreSur}, 
Brendle and Marques \cite{BreMar}, 
Druet \cite{Dru1,Dru2}, 
Druet and Hebey \cite{DruHebIMRS, DruHebEinstLich}, 
Druet, Hebey, and V\'etois \cite{DruHebVet}, 
Druet and Laurain \cite{DruLau}, 
Khuri, Marques and Schoen \cite{KhuMarSch}, 
Li and Zhang \cite{LiZha,LiZha2},
Li and Zhu \cite{LiZhu}, 
Marques \cite{Mar}, 
Schoen \cite{Sch3,Sch4}, and 
V\'etois \cite{Vet}. In the subcritical case, a priori estimates for subcritical Schr\"odinger equations goes back   
to the seminal work by Gidas and Spruck \cite{GidSpr}. The above lists are not exhaustive.

\medskip The positive mass theorem in general relativity, that we use below in this paper, was established in Schoen and Yau \cite{SchYau1}. We refer also to 
Schoen and Yau \cite{SchYau2,SchYau3} and Witten \cite{Wit}.

\section{One more estimate}\label{SecRes}

Theorem \ref{MainThm} in the critical case $p = 6$ implicitly requires that the scalar curvature of the background manifold is positive. When this is not the case, 
there are still several situations where \eqref{SWSyst} possesses positive solutions. Such situations are, for instance, easy to obtain by requiring $a$ to be $G$-invariant, where 
$G$ is a subgroup of the isometry group of $G$ having no finite orbits, and by using the improved Sobolev embeddings in Hebey and Vaugon 
\cite{HebVauSym} (see also Hebey \cite{CIMSBook}). Using part of the analysis developed to prove Theorem \ref{MainThm} we may independently get 
a priori bounds for the set of solutions when the mass potential, balanced by the phase, is sufficiently small (in a non-quantified sense). In particular, we can prove that 
the following theorem holds true. Given $R > 0$ we let $B^{0,1}_R$ be the set of smooth functions $a \in C^\infty(M)$ such 
that $\Vert a\Vert_{C^{0,1}} \le R$. 

\begin{thm}\label{SecThm} Let $(M,g)$ be a smooth compact $3$-dimensional Riemannian manifold, $\lambda \ge 0$, $R > 0$, $q > 0$, and $\theta \in (0,1)$. 
 There exist $\varepsilon, C > 0$, depending only on $M$, $g$, $R$, $\lambda$, $q$, and $\theta$, such that for any $a \in B^{0,1}_R$, 
 $a > 0$, and any $\omega \in \left(-\omega_a,\omega_a\right)$, if
 
 (i) $a-\omega^2 < \varepsilon$ and $\lambda > 0$\hskip.1cm ,\hskip.1cm or
 
 (ii) $a < \varepsilon$ and $\lambda \ge 0$\hskip.1cm ,
 
 \noindent then $\Vert\mathcal{U}\Vert_{C^{2,\theta}} \le C$ for all $\mathcal{U} \in \mathcal{S}_6(\omega^\prime)$, all 
 $\omega^\prime \in \left(-\omega_a,-\omega\right]\bigcup\left[\omega,\omega_a\right)$ in case (i), and all 
 $\omega^\prime \in \left(-\omega_a,\omega_a\right)$ in case (ii).
\end{thm}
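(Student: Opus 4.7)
The plan is to argue by contradiction, invoking the blow-up machinery developed in Sections \ref{blowupest} and \ref{Sect3} for Theorem \ref{MainThm}, and then to exploit the smallness of the mass potential (balanced by the phase) against the positive-mass contribution in the Pohozaev identity at the concentration points.

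Assume the conclusion of the theorem fails. Then one produces sequences $a_n \in B^{0,1}_R$, $a_n>0$, phases $\omega_n,\omega'_n$ satisfying the hypotheses with $\varepsilon = 1/n$, and smooth positive solutions $\mathcal{U}_n = (u_n,v_n)\in \mathcal{S}_6(\omega'_n)$ with $\Vert \mathcal{U}_n\Vert_{C^{2,\theta}}\to\infty$. The maximum principle applied to the second equation of \eqref{SWSyst} gives $0 < v_n \le 1/q$, and Schauder theory then localizes the blow-up onto $u_n$, so $\Vert u_n\Vert_{L^\infty}\to\infty$. Extracting a subsequence, Arzel\`a--Ascoli on $B^{0,1}_R$ yields $a_n \to a_\infty$ in $C^0(M)$ and $\omega'_n \to \omega'_\infty$; in case (ii) this forces $a_\infty \equiv 0$ and $\omega'_\infty = 0$, while in case (i) it yields $\min a_\infty \le \omega_\infty^2 \le \omega_\infty^{\prime 2}$. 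Applying the sharp blow-up description of Section \ref{blowupest} to $u_n$ then produces finitely many concentration points $\{x_1,\dots,x_N\}\subset M$, scales $\mu_{i,n}\to 0$, and rescaled profiles converging to Aubin--Talenti bubbles on $\mathbb{R}^3$ solving $\Delta U = U^5$. Rescaling the second equation by the same factor turns its source into an $O(\mu_{i,n})$ term, so the rescaled $\tilde v_n$'s converge at each $x_i$ to the constant $v_\infty(x_i)\in [0,1/q]$, where $v_\infty$ is the weak limit of $v_n$ on $M$.

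The Pohozaev-type identity derived in Section \ref{Sect3} then takes the schematic form
\begin{equation*}
\lim_{n\to\infty}\mu_{i,n}^{-1}\int_{B_\delta(x_i)}\bigl(a_n - \omega_n^{\prime 2}(qv_n-1)^2 - \tfrac{1}{8}S_g\bigr)u_n^2\,dv_g \;=\; -\,c\cdot m(x_i) + o_\delta(1),
\end{equation*}
for a universal $c>0$ and a quantity $m(x_i)>0$ issued from the positive mass theorem of Schoen and Yau (the conformally spherical case is dismissed separately, since $a_\infty \equiv 0$ or $a_\infty - \omega_\infty^2 \equiv 0$ rules out concentration along the noncompact family of bubbles on the round sphere). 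In case (ii), $a_n \to 0$ uniformly forces $\omega'_n \to 0$, so that $|a_n - \omega_n^{\prime 2}(qv_n-1)^2| \le 2\varepsilon$ pointwise on $M$; the left-hand side above is then $O(\varepsilon)$ up to a fixed contribution from $S_g$ that is controlled by the bubble analysis, and choosing $\varepsilon$ small enough contradicts the strictly negative right-hand side.

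In case (i) the situation is more delicate: since $a_n$ itself need not be small, one cannot bound the effective coefficient $a_n - \omega_n^{\prime 2}(qv_n-1)^2$ pointwise by $\varepsilon$. The role of $\lambda > 0$ is to impose, through the Maxwell--Proca structure of the second equation, sufficient rigidity on the local value of $v_\infty$ at each $x_i$ so that the \emph{integrated} effective coefficient against the concentrating profile is still $O(\varepsilon)$. This phase-compensation cancellation, implemented in a regime where the geometric threshold $\tfrac{1}{8}S_g$ plays no role, is the technical heart of the argument and the main obstacle I expect. Modulo this refinement of the blow-up expansions of Section \ref{Sect3}, the contradiction with the positive-mass term yields the conclusion.
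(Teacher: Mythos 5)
Your setup and the invocation of the blow-up machinery of Sections \ref{blowupest} and \ref{Sect3} are correct, but the concluding step has a genuine gap, which you in fact acknowledge by leaving case (i) ``modulo this refinement''. The positive mass theorem is simply not available here: Lemma \ref{PositiveMass} yields $R(x,x)>0$ only under the hypothesis $\Lambda\le\frac{1}{8}S_g$, and Theorem \ref{SecThm} makes no assumption on the scalar curvature --- indeed its whole purpose (see Section \ref{SecRes}) is to cover manifolds where $S_g$ may be negative. Moreover, your schematic Pohozaev balance cannot be made to work in dimension $3$: as in the proof of Lemma \ref{LemSharpAsypts}, the interior terms involving $h_\alpha$ and the curvature are of order $O(\mu_\alpha r_\alpha)$, hence are $o_\delta(1)$ after the normalization by $\mu_{i,\alpha}^{-1}$ \emph{regardless} of the size of $h_\alpha$, while the boundary terms produce the mass at order $\mu_\alpha/r_\alpha$. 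The identity therefore only forces $\mathcal{R}(x_i,x_i)=0$, exactly as in Section \ref{Sect3}, and without a positivity statement for $\mathcal{R}(x_i,x_i)$ there is no contradiction to extract, no matter how small $\varepsilon$ is.

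The paper's actual conclusion is both simpler and different. The hypotheses (i) (using $\lambda>0$, so that by the argument of Lemma \ref{EllTheorPhaseComp} one has $v_\alpha\to 0$ and hence $(qv_\alpha-1)^2\to 1$) and (ii) are designed precisely so that the effective potential $h_\alpha=a_\alpha-\omega_\alpha^{\prime 2}(qv_\alpha-1)^2$ converges to $0$ in $C^{0,\theta}$. The limit $\mathcal{H}$ of $\mu_{1,\alpha}^{-1/2}u_\alpha$ then satisfies $\Delta_g\mathcal{H}=\sqrt{3}\,\omega_2\sum_i\mu_i^{1/2}\delta_{x_i}$ with $\Delta_g$ the \emph{pure} Laplacian, i.e.\ $\mathcal{H}$ is a nonnegative, nonconstant function on a compact manifold with poles and with $\Delta_g\mathcal{H}\ge 0$. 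Writing $\mathcal{H}$ as a sum of Green's functions of $\Delta_g$ plus a smooth remainder, $\mathcal{H}$ attains an interior minimum away from the poles, and the strong maximum principle forces $\mathcal{H}$ to be constant --- a contradiction (this is the Br\'ezis--Li argument cited in Section \ref{PrfThm2}). In short: the smallness of $a-\omega^2$ is not played against a mass term; it is used to degenerate the limiting operator to $\Delta_g$, which admits no positive Green's function on a compact manifold, so concentration is impossible. You should replace your Pohozaev endgame by this observation.
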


A similar corollary to Corollary 1.1 can be derived from the above theorem. In particular, we prevent the existence of arbitrarily large 
amplitude standing waves for the Klein-Gordon-Maxwell system in electrostatic form when either (i) or (ii) is assumed to hold. The $C^{0,1}$-bound 
on $a$ in Theorem \ref{SecThm} can be lowered. We require the $C^{0,1}$-bound to get $C^{2,\theta}$-estimates on solutions without restrictions on $\theta$. 
Theorem \ref{SecThm} is proved in Section \ref{PrfThm2}.

\section{Variational analysis and the existence part of Theorem \ref{MainThm}}\label{Sect1}

Solutions $(u,v)$ of \eqref{SWSyst} are critical points of a functional $S$ defined on $H^1\times H^1$ where the $H^1$-norms of $u$ and $v$ compete 
one with another. More precisely, 
\begin{equation}\label{FullFunct}
\begin{split}
S(u,v) & = \frac{1}{2}\int_M\vert\nabla u\vert^2dv_g - \frac{\omega^2}{2}\int_M\vert\nabla v\vert^2dv_g + \frac{1}{2}\int_Mau^2dv_g\\
&-\frac{\omega^2\lambda}{2}\int_Mv^2dv_g- \frac{1}{p}\int_Mu^pdv_g - \frac{\omega^2}{2}\int_Mu^2(1-qv)^2dv_g\hskip.1cm .
\end{split}
\end{equation}
As is easily checked, $S(0,v) < 0$ if $v$ is nonconstant and, for any $R > 0$, there exists $u \in H^1$ such that 
$\Vert u\Vert_{H^1} \ge R$ and $S(tu,0) > 0$ for all $t \in (0,1]$. 
In order to overcome the problems caused by  the competition between $u$ and $v$ in $S$, following 
the very nice idea in Benci and Fortunato \cite{BenFor0}, we introduce the map 
$\Phi: H^1 \to H^1$ defined by the equation
\begin{equation}\label{DefPhi}
\Delta_g\Phi(u) + \left(\lambda + q^2u^2\right)\Phi(u) = qu^2\hskip.1cm .
\end{equation}
It follows from standard variational arguments that $\Phi$ is well-defined in $H^1$ as soon as $\lambda > 0$. Noting that 
$u^2\Phi(u) \in L^2$ since $u$ and $\Phi(u)$ are in $H^1$, it follows from \eqref{DefPhi} that $\Phi(u) \in H^2$. We further get 
with \eqref{DefPhi} that $\Phi(u) \in H^{2,3}$.  
Elementary though useful properties of $\Phi$ are as follows.

\begin{lem}\label{C1Phi} The map $\Phi: H^1 \to H^1$ is $C^1$ and its differential $D\Phi(u) = V_u$ at $u$ is the map defined by
\begin{equation}\label{EqtC1Phi}
\Delta_gV_u(h) + \left(\lambda + q^2u^2\right)V_u(h) = 2qu\left(1-q\Phi(u)\right)h
\end{equation}
for all $h \in H^1$. Moreover, it holds that
\begin{equation}\label{EqtDbleIneq}
0 \le \Phi(u) \le \frac{1}{q}
\end{equation}
for all $u \in H^1$.
\end{lem}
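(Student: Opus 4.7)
The plan is to apply the implicit function theorem to derive the $C^1$ property together with the formula for $V_u$, and to use $L^2$ truncation arguments for the pointwise bounds.

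For the differentiability, I would introduce the map $F : H^1 \times H^1 \to H^{-1}$ defined by
\[
\langle F(u,v), \phi\rangle = \int_M \langle \nabla v, \nabla \phi\rangle\, dv_g + \int_M (\lambda + q^2 u^2) v \phi \, dv_g - \int_M q u^2 \phi \, dv_g
\]
for $\phi \in H^1$. The Sobolev embedding $H^1 \hookrightarrow L^6$ valid in dimension three makes the integrals involving $u^2 v \phi$ and $u^2 \phi$ well-defined, and $F$ is polynomial in $(u,v)$, hence smooth. By \eqref{DefPhi}, $\Phi(u)$ is characterized by $F(u, \Phi(u)) = 0$. The partial differential $\partial_v F(u,v) : H^1 \to H^{-1}$ is the linear operator $w \mapsto \Delta_g w + (\lambda + q^2 u^2) w$, whose associated bilinear form is coercive thanks to $\lambda > 0$, so it is an isomorphism by Lax--Milgram. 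The implicit function theorem then yields that $\Phi$ is $C^1$, and differentiating the identity $F(u, \Phi(u)) = 0$ in a direction $h \in H^1$ gives
\[
\Delta_g V_u(h) + 2 q^2 u h \Phi(u) + (\lambda + q^2 u^2) V_u(h) = 2 q u h,
\]
which rearranges exactly to \eqref{EqtC1Phi}.

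For the pointwise bounds I would test \eqref{DefPhi} against suitable truncations. Writing $\Phi(u)^\pm = \max(\pm\Phi(u),0) \in H^1$ and testing against $-\Phi(u)^-$ yields
\[
\int_M |\nabla \Phi(u)^-|^2\, dv_g + \int_M (\lambda + q^2 u^2)(\Phi(u)^-)^2\, dv_g = -\int_M q u^2 \Phi(u)^-\, dv_g \le 0,
\]
forcing $\Phi(u)^- \equiv 0$ since $\lambda > 0$, and hence $\Phi(u) \ge 0$. For the upper bound, set $w = \Phi(u) - 1/q$, for which \eqref{DefPhi} gives
\[
\Delta_g w + (\lambda + q^2 u^2) w = -\frac{\lambda}{q} \le 0.
\]
Testing against $w^+$ produces
\[
\int_M |\nabla w^+|^2\, dv_g + \int_M (\lambda + q^2 u^2)(w^+)^2\, dv_g = -\frac{\lambda}{q}\int_M w^+\, dv_g \le 0,
\]
so $w^+ \equiv 0$ and therefore $\Phi(u) \le 1/q$. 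Since the text has already observed that $\Phi(u) \in H^{2,3}$, which embeds continuously into $C^0(M)$ in dimension three, the almost-everywhere bounds are in fact pointwise.

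The only step requiring any real care is verifying that $F$ is $C^1$ as a map between the chosen Banach spaces --- specifically, that the nonlinear term $u \mapsto u^2 v$ is Fr\'echet-differentiable into $H^{-1}$ with continuous derivative, which follows routinely from $H^1 \hookrightarrow L^6$ and H\"older's inequality. Once the implicit-function machinery is in place, the formula for $V_u$ and the $L^\infty$ bounds are immediate.
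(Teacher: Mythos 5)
Your proof is correct, but it takes a genuinely different route from the paper's. For the differentiability you invoke the implicit function theorem for the smooth (indeed polynomial, built from bounded multilinear maps via $H^1\hookrightarrow L^6$) map $F:H^1\times H^1\to H^{-1}$, using coercivity of $\partial_vF$ when $\lambda>0$; since \eqref{DefPhi} is affine in $v$ with invertible linear part, the local implicit function must coincide with $\Phi$, and implicit differentiation gives \eqref{EqtC1Phi}. The paper instead argues by hand: it writes the equations satisfied by $\Phi(u+h)-\Phi(u)$ and by $\Phi(u+h)-\Phi(u)-V_u(h)$, multiplies by these differences, and uses coercivity of $\Delta_g+\lambda$ together with H\"older and Sobolev. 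Your route is cleaner and actually yields more regularity of $\Phi$ (it is $C^\infty$, not just $C^1$), but the paper's computation is not wasted effort: the same manipulation produces the quantitative Lipschitz-type estimates \eqref{LipsType} and \eqref{LipsType2}, with control by the weaker $L^3$-norm of $v-u$, which are used repeatedly later (passing to limits in Palais--Smale sequences, in the subcritical-to-critical limit $p_\alpha\to 6$, and in Lemma \ref{EllTheorPhaseComp}); the abstract IFT does not hand you these. For the bounds \eqref{EqtDbleIneq}, your truncation tests against $-\Phi(u)^-$ and $(\Phi(u)-1/q)^+$ are the weak-formulation version of the paper's maximum-principle argument applied to \eqref{DefPhi} and \eqref{Eqt0LemC1}; both are fine, and your remark that $\Phi(u)\in H^{2,3}\hookrightarrow C^0$ upgrades the a.e.\ bounds to pointwise ones is a correct finishing touch.
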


\begin{proof}[Proof of Lemma \ref{C1Phi}] Let $u \in H^1$. It is clear from the maximum principle that $\Phi(u) \ge 0$. Noting that 
\begin{equation}\label{Eqt0LemC1}
\Delta_g\left(\frac{1}{q}-\Phi(u)\right) + \left(\lambda + q^2u^2\right)\left(\frac{1}{q}-\Phi(u)\right) = \frac{\lambda}{q}
\end{equation}
it also follows from the maximum principle that $\Phi(u) \le \frac{1}{q}$, and this proves \eqref{EqtDbleIneq}. Given $h \in H^1$, we compute
\begin{equation}\label{Lem1Eqt1}
\begin{split}
&\Delta_g\left(\Phi(u+h)-\Phi(u) \right) + \left(\lambda + q^2u^2\right)\left(\Phi(u+h)-\Phi(u)\right)\\
&= q\left(1-q\Phi(u+h)\right)\left(h^2+2uh\right)\hskip.1cm .
\end{split}
\end{equation}
Multiplying \eqref{Lem1Eqt1} by $\Phi(u+h)-\Phi(u)$, and integrating over $M$, by the coercivity of $\Delta_g + \lambda$, by the Sobolev embedding theorem, 
by H\"older's inequality, and by \eqref{EqtDbleIneq}, we get that
$$\left\Vert\Phi(u+h)-\Phi(u)\right\Vert_{H^1} \le C(u)\Vert h\Vert_{H^1}\left(1+\Vert h\Vert_{H^1}\right)$$
for all $h \in H^1$, where $C(u) > 0$ is independent of $h$. In particular, $\Phi$ is continuous. Also we compute, 
\begin{equation}\label{Lem1Eqt2}
\begin{split}
&\Delta_g\left(\Phi(u+h)-\Phi(u) - V_u(h)\right) + \left(\lambda + q^2u^2\right)\left(\Phi(u+h)-\Phi(u) - V_u(h)\right)\\
&= qh^2 - q^2\Phi(u+h)h^2 + 2q^2u\left(\Phi(u)-\Phi(u+h)\right)h\hskip.1cm .
\end{split}
\end{equation}
Multiplying \eqref{Lem1Eqt2} by $\Phi(u+h)-\Phi(u) - V_u(h)$ and integrating over $M$, by the coercivity of $\Delta_g + \lambda$, by the Sobolev embedding theorem, 
by H\"older's inequality, and by \eqref{EqtDbleIneq}, we get that
$$\left\Vert\Phi(u+h)-\Phi(u)-V_u(h)\right\Vert_{H^1} \le C(u)\left\Vert h\right\Vert_{H^1}\left(\left\Vert h\right\Vert_{H^1}+\Vert\Phi(u+h)-\Phi(u)\Vert_{H^1}\right)$$
for all $h \in H^1$, where $C(u) > 0$ is independent of $h$. Since $\Phi$ is continuous, the differentiability of $\Phi$ at $u$, as well as the fact 
that $D\Phi(u) = V_u$, follow from this estimate. The continuity of $u \to V_u$ can be proved with similar arguments. This proves the lemma.
\end{proof}

Coming back to \eqref{Lem1Eqt1}, and the procedure described after \eqref{Lem1Eqt1}, it holds true that
\begin{equation}\label{LipsType}
\Vert\Phi(v)-\Phi(u)\Vert_{H^1} \le C(u,v)\Vert v-u\Vert_{L^3}
\end{equation}
for all $u, v \in H^1$, where $C(u,v) = C\bigl(\Vert u\Vert_{L^3} + \Vert v\Vert_{L^3}\bigr)$ and $C > 0$ 
is independent of $u$ and $v$. We can also write that
\begin{equation}\label{LipsType2}
\Vert V_v(h)-V_u(h)\Vert_{H^1} \le C(u,v)\Vert v-u\Vert_{L^3}\Vert h\Vert_{L^6}
\end{equation}
for all $u, v, h \in H^1$, where $C(u,v)= C\bigl(1+\Vert u\Vert_{L^3}^2 + \Vert v\Vert_{L^3}^2\bigr)$ and $C > 0$ is independent of $u$ and $v$.

\begin{lem}\label{RestDiff} The map $\Theta: H^1 \to \mathbb{R}$ given by 
\begin{equation}\label{DefTheta}
\Theta(u) = \frac{1}{2}\int_M\left(1-q\Phi(u)\right)u^2dv_g
\end{equation}
is $C^1$ and, for any $u \in H^1$,
\begin{equation}\label{DiffTheta}
D\Theta(u).(h) = \int_M\left(1-q\Phi(u)\right)^2uhdv_g
\end{equation}
for all $h \in H^1$.
\end{lem}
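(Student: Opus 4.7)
The plan is to split $\Theta$ as $\Theta(u) = \frac12\int_M u^2\,dv_g - \frac{q}{2}\int_M \Phi(u)\,u^2\,dv_g$ and differentiate each piece. The first term is a continuous quadratic form on $L^2 \supset H^1$ and hence $C^1$ with differential $h \mapsto \int_M u h\,dv_g$. For the second term I would view $u \mapsto \int_M \Phi(u)u^2 dv_g$ as the composition of the $C^1$ map $u \mapsto (\Phi(u),u) \in H^1 \times H^1$ (using Lemma \ref{C1Phi}) with the continuous trilinear form $(\varphi,u,w) \mapsto \int_M \varphi u w\, dv_g$, which is smooth on $H^1\times H^1\times H^1$ by the Sobolev embedding $H^1 \hookrightarrow L^6$ in dimension $3$ (so that each factor lies in $L^3$ or better). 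The chain rule then yields
\begin{equation*}
D\Theta(u).h = \int_M u h\, dv_g - \frac{q}{2}\int_M V_u(h)\, u^2\, dv_g - q\int_M \Phi(u)\, u h\, dv_g\hskip.1cm .
\end{equation*}

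The key step is to eliminate the term involving $V_u(h)$ by exploiting the self-adjointness of $\Delta_g + \lambda + q^2 u^2$. Multiplying \eqref{DefPhi} by $V_u(h)$ and \eqref{EqtC1Phi} by $\Phi(u)$, integrating over $M$, and equating the left-hand sides gives
\begin{equation*}
q\int_M u^2\, V_u(h)\, dv_g = 2q\int_M u\, \Phi(u)\bigl(1-q\Phi(u)\bigr)h\, dv_g\hskip.1cm ,
\end{equation*}
which is the core computational identity. Plugging this into the expression for $D\Theta(u).h$ and expanding, the terms reorganize cleanly into $\int_M\bigl(1 - 2q\Phi(u) + q^2\Phi(u)^2\bigr) u h\, dv_g = \int_M(1-q\Phi(u))^2 u h\, dv_g$, which is the formula claimed in \eqref{DiffTheta}. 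Note that for this integration by parts to be legitimate, one needs $\Phi(u)$ and $V_u(h)$ to have enough regularity; this is provided by the observation following \eqref{DefPhi} that $\Phi(u) \in H^{2,3}$, and the same bootstrap applied to \eqref{EqtC1Phi} gives $V_u(h)\in H^{2,3/2}$, so all pairings are well-defined.

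For the $C^1$-property I must also verify continuity of $u \mapsto D\Theta(u) \in (H^1)^*$. This follows from the Lipschitz-type estimate \eqref{LipsType} on $\Phi$, together with the Sobolev embedding $H^1 \hookrightarrow L^6$ applied to the representation $D\Theta(u).h = \int_M(1-q\Phi(u))^2 u h\, dv_g$: if $u_n \to u$ in $H^1$ then $u_n \to u$ in $L^3$, $\Phi(u_n) \to \Phi(u)$ in $H^1$, and by boundedness of $\Phi(u)$ in $L^\infty$ (see \eqref{EqtDbleIneq}) the integrand converges in $L^{6/5}$, giving continuity of the differential as a map into $(H^1)^*$. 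The main (and only) real obstacle is the algebraic identity relating $V_u(h)$ and $\Phi(u)$ via symmetry of the linear operator, since the rest of the argument is fairly mechanical bookkeeping of Sobolev norms.
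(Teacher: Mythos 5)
Your proof is correct, but it takes a more computational route than the paper. The paper's proof rewrites $\Theta(u) = H\left(u,\Phi(u)\right) + \frac{1}{2}\int_Mu^2dv_g$, where
$$H(u,\Phi) = \frac{1}{2}\int_M\left(\vert\nabla\Phi\vert^2+\lambda\Phi^2\right)dv_g + \frac{q^2}{2}\int_Mu^2\Phi^2dv_g - q\int_Mu^2\Phi dv_g\hskip.1cm ,$$
and observes that $\frac{\partial H}{\partial\Phi}\left(u,\Phi(u)\right) = 0$ because $\Phi(u)$ is precisely the critical point of $H(u,\cdot)$; the chain rule then kills the term involving $D\Phi(u)$ at once, and only $\frac{\partial H}{\partial u}$ survives, giving \eqref{DiffTheta} with no further computation. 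You instead differentiate the original expression for $\Theta$ directly and then remove the $V_u(h)$-term by pairing \eqref{DefPhi} against $V_u(h)$ and \eqref{EqtC1Phi} against $\Phi(u)$ and using the symmetry of $\Delta_g+\lambda+q^2u^2$. The two arguments are equivalent -- your duality identity $q\int_Mu^2V_u(h)dv_g = 2q\int_Mu\Phi(u)\left(1-q\Phi(u)\right)hdv_g$ is exactly the weak form of $\frac{\partial H}{\partial\Phi}\left(u,\Phi(u)\right) = 0$ tested against $V_u(h)$, combined with the weak form of \eqref{EqtC1Phi} tested against $\Phi(u)$ -- but the paper's formulation is the standard reduced-functional device (going back to Benci--Fortunato) and avoids the explicit bookkeeping. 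One small remark: you do not actually need the $H^{2,3}$ or $H^{2,3/2}$ regularity you invoke to justify the pairing; since $u,h,\Phi(u),V_u(h)$ all lie in $H^1\hookrightarrow L^6$ and $0\le\Phi(u)\le 1/q$ by \eqref{EqtDbleIneq}, all the integrands in the two weak formulations are already in $L^1$ and the identity holds at the $H^1$ level. Your continuity argument for $u\mapsto D\Theta(u)$ via \eqref{LipsType} and the $L^{6/5}$--$L^6$ duality is fine.
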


\begin{proof}[Proof of Lemma \ref{RestDiff}] It follows from Lemma \ref{C1Phi} that $\Theta$ is $C^1$. By \eqref{DefPhi},
$$\Theta(u) = \frac{1}{2}\int_M\left(\vert\nabla\Phi(u)\vert^2+\lambda\Phi(u)^2\right)dv_g 
+ \frac{1}{2}\int_M\left(1-q\Phi(u)\right)^2u^2dv_g\hskip.1cm ,$$
and we also have that $\frac{\partial H}{\partial\Phi}\left(u,\Phi(u)\right) = 0$, where 
$$H(u,\Phi) = \frac{1}{2}\int_M\left(\vert\nabla\Phi\vert^2+\lambda\Phi^2\right)dv_g + \frac{q^2}{2}\int_Mu^2\Phi^2dv_g 
- q\int_Mu^2\Phi dv_g\hskip.1cm .$$
Noting that
$$\Theta(u) = H\left(u,\Phi(u)\right) + \frac{1}{2}\int_Mu^2dv_g\hskip.1cm ,$$
we get that \eqref{DiffTheta} holds true. This ends the proof of the lemma.
\end{proof}

At this point we define the functional $I_p: H^1 \to \mathbb{R}$ by
\begin{equation}\label{DefFct}
\begin{split}
I_p(u) &= \frac{1}{2}\int_M\vert\nabla u\vert^2dv_g + \frac{1}{2}\int_Mau^2dv_g - \frac{1}{p}\int_M(u^+)^pdv_g\\
&- \frac{\omega^2}{2}\int_M\left(1-q\Phi(u)\right)(u^+)^2dv_g\hskip.1cm ,
\end{split}
\end{equation}
where $u^+ = \max(u,0)$, and $p \in (2,6]$. By Lemma \ref{C1Phi} we get that $I_p$ is $C^1$ and if $\hat\Theta$ is given by
\begin{equation}\label{DefHatTheta}
\hat\Theta(u) = \int_M\left(1-q\Phi(u)\right)(u^+)^2dv_g
\end{equation}
then, for any $u \in H^1$,
\begin{equation}\label{DiffHatTheta}
D\hat\Theta(u).(h) = 2\int_M\left(1-q\Phi(u)\right)u^+hdv_g - q\int_M(u^+)^2V_u(h)dv_g
\end{equation}
for all $h \in H^1$, where $V_u$ is as in \eqref{EqtC1Phi}. First we prove the existence part of 
Theorem \ref{MainThm} when $p \in (2,6)$. 
For this aim we use the mountain pass lemma, as stated in Ambrosetti and Rabinowitz 
\cite{AmbRab}, that we apply to the functional $I_p$ defined in \eqref{DefFct}.

\begin{proof}[Proof of the Existence Part in Theorem \ref{MainThm} when $p \in (2,6)$] Suppose first that $\lambda = 0$. Since $p < 6$ 
and $a > 0$ there exists $u$ 
smooth and positive such that
$$\Delta_gu + au = u^{p-1}\hskip.1cm .$$
The existence of $u$ easily follows from standard variational arguments. 
Then $(u,\frac{1}{q})$ solve \eqref{SWSyst}. From now on we assume $\lambda > 0$. 
It is easily checked that $I_p(0) = 0$ and that for $u_0$ 
arbitrarily given in $H^1$, with $u_0^+ \not\equiv 0$, there holds that $I_p(tu_0) \to -\infty$ as $t \to +\infty$. Moreover, since we assumed that $\omega \in (-\omega_a,\omega_a)$, we get 
from \eqref{EqtDbleIneq} and the Sobolev embedding theorem that 
\begin{equation}\label{Eqt1PExist1}
\begin{split}
I_p(u) &\ge \frac{1}{2}\int_M\vert\nabla u\vert^2dv_g + \frac{1}{2}\int_M\left(a-\omega^2\right)u^2dv_g - \frac{1}{p}\int_M\vert u\vert^pdv_g\\
&\ge C_1\Vert u\Vert_{H^1}^2 - C_2\Vert u\Vert_{H^1}^p\hskip.1cm ,
\end{split}
\end{equation}
where $C_1, C_2 > 0$. In particular, it follows from \eqref{Eqt1PExist1} that there exist $\delta > 0$ and $C > 0$ such that 
$I_p(u) \ge C$ for all $u \in H^1$ which satisfy $\Vert u\Vert_{H^1} = \delta$. Let $T_0 = T(u_0)$ be such that $I_p(T_0u_0) < 0$. 
Let $c_p = c_p(u_0)$ be given by
\begin{equation}\label{Eqt2PExist1}
c_p = \inf_{P\in\mathcal{P}}\max_{u\in P}I_p(u)\hskip.1cm ,
\end{equation}
where $\mathcal{P}$ denotes the class of continuous paths joining $0$ to $T_0u_0$. 
By the mountain pass lemma, see Ambrosetti and Rabinowitz 
\cite{AmbRab}, there exists $(u_\alpha)_\alpha$ in $H^1$ such that $I_p(u_\alpha) \to c_p$ 
and $DI_p(u_\alpha) \to 0$ as $\alpha \to +\infty$. Writing that $I_p(u_\alpha) = c_p + o(1)$ 
and that $DI_p(u_\alpha).(u_\alpha) = o(\Vert u_\alpha\Vert_{H^1})$ we get that
\begin{equation}\label{Eqt3PExist1}
\begin{split}
&\frac{1}{2}\int_M\left(\vert\nabla u_\alpha\vert^2+ au_\alpha^2\right)dv_g\\
&\hskip.4cm= \frac{1}{p}\int_M(u_\alpha^+)^pdv_g + c_p + \frac{\omega^2}{2}\hat\Theta(u_\alpha) + o(1)
\hskip.1cm ,\hskip.1cm\hbox{and}\\
&\frac{1}{2}\int_M\left(\vert\nabla u_\alpha\vert^2+ au_\alpha^2\right)dv_g\\
&\hskip.4cm= \frac{1}{2}\int_M(u_\alpha^+)^pdv_g + \frac{\omega^2}{4}D\hat\Theta(u_\alpha).(u_\alpha) + o(\Vert u_\alpha\Vert_{H^1})\hskip.1cm ,
\end{split}
\end{equation}
where $\hat\Theta$ is as in \eqref{DefHatTheta}. By \eqref{EqtC1Phi} and \eqref{DiffHatTheta}, for any $u \in H^1$,
$$\Delta_g\left(V_u(u) - 2\Phi(u)\right) + \left(\lambda + q^2u^2\right)\left(V_u(u)-2\Phi(u)\right) 
= -2q^2u^2\Phi(u) \le 0\hskip.1cm .$$
By the maximum principle and \eqref{EqtDbleIneq} we then get that 
\begin{equation}\label{Eqt4PExist1}
0 \le V_u(u) \le 2\Phi(u) \le \frac{2}{q}
\end{equation}
for all $u \in H^1$. In particular, thanks to \eqref{DiffHatTheta} and \eqref{Eqt4PExist1}, we get that
\begin{equation}\label{Eqt5PExist1}
\left\vert D\hat\Theta(u).(u)\right\vert \le C\int_M(u^+)^2dv_g
\end{equation}
for all $u \in H^1$, where $C > 0$ is independent of $u$. 
Substracting the second equation in \eqref{Eqt3PExist1} to the first, thanks to \eqref{Eqt4PExist1}, 
it follows that
\begin{equation}\label{Eqt6PExist1}
\begin{split}
\left(\frac{1}{2}-\frac{1}{p}\right)\Vert u_\alpha^+\Vert_{L^p}^p 
&\le c_p + o(1) + C\Vert u_\alpha^+\Vert_{L^2}^2+o(\Vert u_\alpha\Vert_{H^1})\\
&\le c_p + o(1) + C^\prime\Vert u_\alpha^+\Vert_{L^p}^2+o(\Vert u_\alpha\Vert_{H^1})
\end{split}
\end{equation}
for all $\alpha$, where $C, C^\prime > 0$ are independent of $\alpha$. By \eqref{Eqt3PExist1} and \eqref{Eqt6PExist1}, since $p > 2$, 
the sequence $(u_\alpha)_\alpha$ is bounded in $H^1$. Since $p < 6$ we may then assume that 
there exists $u_p \in H^1$ such that, up to a subsequence, 
$u_\alpha \rightharpoonup u_p$ in $H^1$ and $u_\alpha \to u_p$ in $L^p\cap L^3$ as $\alpha \to +\infty$. 
Since $c_p > 0$, it is clear from \eqref{Eqt3PExist1} that $u_p\not\equiv 0$.
For any $\varphi \in H^1$, 
$DI_p(u_\alpha).(\varphi) = o(1)$. Letting $\alpha \to +\infty$ in this equation, thanks to \eqref{LipsType}, \eqref{LipsType2},  
and \eqref{DiffTheta}, it follows that for any $\varphi \in H^1$,
\begin{equation}\label{Eqt7PExist1}
\begin{split}
&\int_M(\nabla u_p\nabla\varphi)dv_g + \int_Mau_p\varphi dv_g\\
&= \int_M(u_p^+)^{p-1}\varphi dv_g + \omega^2\int_M\left(1-q\Phi(u_p)\right)u_p^+\varphi dv_g\\
&\hskip.4cm- \frac{q\omega^2}{2}\int_MV_{u_p}(\varphi)(u_p^+)^2dv_g\hskip.1cm .
\end{split}
\end{equation}
Noting that $V_{u_p}(u_p^-) \le 0$, where $u^- = \max(-u,0)$, we get from \eqref{Eqt7PExist1} that
$$\int_M\left((\nabla u_p\nabla u_p^-) +au_pu_p^-\right)dv_g \ge 0$$
and it follows that $u_p^- \equiv 0$. Hence $u_p \ge 0$ in $M$. By Lemma \ref{RestDiff} and \eqref{Eqt7PExist1} 
we can then write that for any $\varphi \in H^1$,
\begin{equation}\label{Eqt8PExist1}
\begin{split}
&\int_M(\nabla u_p\nabla\varphi)dv_g + \int_Mau_p\varphi dv_g\\
&= \int_Mu_p^{p-1}\varphi dv_g + \omega^2\int_M\left(1-q\Phi(u_p)\right)^2u_p\varphi dv_g\hskip.1cm .
\end{split}
\end{equation}
In particular, by \eqref{Eqt8PExist1}, we get that $\left(u_p,\Phi(u_p)\right)$ solves \eqref{SWSyst}. By the maximum principle and elliptic regularity 
we get that $u_p > 0$, $\Phi(u_p) >0$, and $u_p$ and $\Phi(u_p)$ are smooth. This ends the proof of 
the existence part in Theorem \ref{MainThm}  when $p \in (2,6)$.
\end{proof}

As already mentionned, it follows from \eqref{Eqt3PExist1} that the sequence $(u_\alpha)_\alpha$ is bounded in $H^1$. By 
\eqref{LipsType} and \eqref{LipsType2} we then get that
$$D\hat\Theta(u_\alpha).(u_\alpha) \to D\hat\Theta(u_p).(u_p)$$
as $\alpha \to +\infty$ since $u_\alpha \to u_p$ in $L^p\cap L^3$. Also there holds that $\int_Mau_\alpha^2dv_g \to \int_Mau_p^2dv_g$ and 
$\int_M(u_\alpha^+)^pdv_g \to \int_Mu_p^pdv_g$ as $\alpha \to +\infty$. By \eqref{Eqt7PExist1},
$$\frac{1}{2}\int_M\left(\vert\nabla u_p\vert^2+ au_p^2\right)dv_g = \frac{1}{2}\int_Mu_p^pdv_g + \frac{\omega^2}{4}D\hat\Theta(u_p).(u_p)\hskip.1cm .$$
Coming back to the second equation in \eqref{Eqt3PExist1} we get that
$$\int_M\vert\nabla u_\alpha\vert^2dv_g \to \int_M\vert\nabla u_p\vert^2dv_g$$
as $\alpha \to +\infty$ and it follows that $u_\alpha \to u_p$ in $H^1$ as $\alpha \to +\infty$. By the first equation in \eqref{Eqt3PExist1}, since by 
\eqref{LipsType} we can write that $\hat\Theta(u_\alpha) \to \hat\Theta(u_p)$ as $\alpha \to +\infty$, we get that
\begin{equation}\label{Levelcp}
I_p(u_p) = c_p\hskip.1cm ,
\end{equation}
where $c_p$ is as in \eqref{Eqt2PExist1}.

\medskip At this point it remains to prove the existence part of Theorem \ref{MainThm} when $p = 6$. For this aim we use the existence of subcritical solutions 
we just obtained and the developments in Schoen \cite{Sch1} based on the positive mass theorem of Schoen and Yau \cite{SchYau1} (see also 
Schoen and Yau \cite{SchYau2,SchYau3} and Witten \cite{Wit}). 
Let $G$ be the Green's function of the conformal Laplacian $\Delta_g + \frac{1}{8}S_g$. Let $x_0$ be given in $M$ and 
$G(x) = G(x_0,x)$. In geodesic normal 
coordinates,
\begin{equation}\label{GreenSchYau}
G(x) = \frac{1}{\omega_2\vert x\vert} + A + \alpha(x)\hskip.1cm ,
\end{equation}
where $\omega_2$ is the volume of the unit $2$-sphere, and $\alpha(x) = O(\vert x\vert)$. 
Noting that $\hat g = G^4g$ is scalar flat and asymptotically Euclidean, 
it is a consequence of the positive mass theorem that $A \ge 0$ and $A = 0$ if and only if $(M,g)$ is 
conformally diffeomorphic to the unit sphere. Following Schoen \cite{Sch1}, we let $\rho_0 > 0$ be a small radius 
and $\varepsilon_0 > 0$ to be chosen small relative to $\rho_0$. Let also $\psi$ be a piecewise smooth decreasing function 
of $\vert x\vert$ such that $\psi(x) = 1$ for $\vert x\vert \le \rho_0$, $\psi(x) = 0$ for $\vert x\vert \ge 2\rho_0$, and 
$\vert\nabla\psi\vert \le \rho_0^{-1}$ for $\rho_0 \le \vert x\vert \le 2\rho_0$. We define $u_\varepsilon$, $\varepsilon > 0$, by
\begin{equation}\label{DefuEps}
\begin{cases}
u_\varepsilon(x) = \left(\frac{\varepsilon}{\varepsilon^2+d_g(x_0,x)^2}\right)^{1/2}\hskip.2cm\hbox{for}~d_g(x_0,x) \le \rho_0\hskip.1cm ,\\
u_\varepsilon(x) = \varepsilon_0\left(G(x) - \psi(x)\alpha(x)\right)\hskip.2cm\hbox{for}~\rho_0 \le d_g(x_0,x) \le 2\rho_0\hskip.1cm ,\\
u_\varepsilon(x) = \varepsilon_0 G(x)\hskip.2cm\hbox{for}~d_g(x_0,x) \ge 2\rho_0\hskip.1cm .
\end{cases}
\end{equation}
and require that
$$\varepsilon_0\left(\frac{1}{\omega_2\rho_0}+A\right) = \sqrt{\frac{\varepsilon}{\varepsilon^2+\rho_0^2}}\hskip.1cm .$$
Then, see Schoen \cite{Sch1}, since $A > 0$ if $(M,g)$ is not conformally diffeomorphic to $(S^3,g_0)$, we get that 
\begin{equation}\label{QuotSch}
\frac{\int_M\left(\vert\nabla u_\varepsilon\vert^2+\frac{1}{8}S_gu_\varepsilon^2\right)dv_g}{\left(\int_Mu_\varepsilon^6dv_g\right)^{1/3}} 
< \frac{1}{K_3^2}
\end{equation}
for $\varepsilon \ll 1$, when $(M,g)$ is not conformally diffeomorphic to $(S^3,g_0)$, where $K_3$ is the sharp constant in the Euclidean 
Sobolev inequality
\begin{equation}\label{EuclIneq}
\left(\int_{\mathbb{R}^3}\vert u\vert^6dx\right)^{1/3} \le K_3^2\int_{\mathbb{R}^3}\vert\nabla u\vert^2dx\hskip.1cm .
\end{equation}
Also there holds that
\begin{equation}\label{QuotSch2}
\int_Mu_\varepsilon^6dv_g = \int_{\mathbb{R}^3}\left(\frac{1}{1+\vert x\vert^2}\right)^3dx + o(1)\hskip.1cm .
\end{equation}
Now we split the proof of the existence part of Theorem \ref{MainThm} when $p = 6$ in two cases. In the first case we assume that 
$(M,g)$ is not conformally diffeomorphic to the unit $3$-sphere. In the second case we assume that 
$(M,g)$ is conformally diffeomorphic to the unit $3$-sphere. 

\medskip We use in what follows phase compensation for 
electrostatic Klein-Gordon-Maxwell systems. 
Phase compensation follows from the subcritical nature of the second equation in \eqref{SWSyst}.
It is a key tool in studying \eqref{SWSyst} and it 
can be explained in naive terms in the following way: if $(u,v)$ solves \eqref{SWSyst}, and 
$u$ is small in $L^{p^\prime}$, for $p^\prime$ sufficiently large but still in the subcritical range, then, by Sobolev embeddings, $v$ needs to be small in $L^\infty$, and the 
potential like term 
$a-\omega^2(qv-1)^2$ in the nonlinear Schr\"odinger equation in \eqref{SWSyst} approaches $a-\omega^2$. In particular, the nonlinear 
Schr\"odinger equation in \eqref{SWSyst}, and its variational formulations, approach the static isolated nonlinear model Schr\"odinger equation 
with unknown function $u$, potential $a-\omega^2$, and nonlinear term $u^{p-1}$.

\begin{proof}[Proof of the Existence Part in Theorem \ref{MainThm} when $p = 6$. Case 1] We assume 
that $(M,g)$ is not conformally diffeomorphic to the unit $3$-sphere. 
Suppose first that $\lambda = 0$ or that $\omega = 0$. By \eqref{MainAssumpt} and \eqref{QuotSch} we then get that 
$a \le \frac{1}{8}S_g$ and that 
$$\frac{\int_M\left(\vert\nabla u_\varepsilon\vert^2+au_\varepsilon^2\right)dv_g}{\left(\int_Mu_\varepsilon^6dv_g\right)^{1/3}} 
< \frac{1}{K_3^2}$$
for $\varepsilon \ll 1$. In particular, see for instance Aubin \cite{Aub1,Aub2}, 
there exists $u$ smooth and positive such that
$$\Delta_gu+au = u^5\hskip.1cm .$$
Then $(u,\frac{1}{q})$ solves \eqref{SWSyst} if $\lambda = 0$, and $\left(u,\Phi(u)\right)$ solves \eqref{SWSyst} if 
$\lambda > 0$. From now on we assume that $\lambda > 0$ and $\omega^2 > 0$. 
Let $(\varepsilon_\alpha)_\alpha$ be a sequence of positive real numbers such that $\varepsilon_\alpha \to 0$ 
as $\alpha \to +\infty$. Let $u_\alpha = u_{\varepsilon_\alpha}$. By \eqref{QuotSch2}, there exists $T > 0$, independent 
of $\alpha$, such that $I_6(Tu_\alpha) < 0$ for all $\alpha \gg 1$. By \eqref{QuotSch2}, noting that $u_\alpha \to 0$ a.e. 
as $\alpha \to +\infty$, we get that $u_\alpha \to 0$ in $L^q$ for all $q < 6$ as $\alpha \to +\infty$. Let $(t_\alpha)_\alpha$ 
be any sequence in $[0,T]$. We have that 
$\Phi(0) = 0$ and thus, by \eqref{LipsType}, there holds that $\Phi(t_\alpha u_\alpha) \to 0$ 
in $H^1$ as $\alpha \to +\infty$. By \eqref{DefPhi} and \eqref{EqtDbleIneq} we then get that 
$\Phi(t_\alpha u_\alpha) \to 0$ in $H^{2,q}$ for all $q < 3$ as $\alpha \to +\infty$. In particular, $\Phi(t_\alpha u_\alpha) \to 0$ 
in $L^\infty$ as $\alpha \to +\infty$, and we can write that
\begin{equation}\label{Conv0ArgEx1}
\max_{0\le t \le T}\Vert\Phi(tu_\alpha)\Vert_{L^\infty} \to 0
\end{equation}
as $\alpha \to +\infty$. Since $k\lambda < 1$, we get with \eqref{Conv0ArgEx1} that for any $\alpha \gg 1$, and any $t \in [0,T]$,
\begin{equation}\label{IneqC0Ex1}
\int_M\left(1-q\Phi(t u_\alpha)\right)u_\alpha^2dv_g \ge k\lambda \int_Mu_\alpha^2dv_g
\hskip.1cm
\end{equation}
Let $\mathcal{F}_6$ be the functional defined in $H^1$ by
\begin{equation}\label{DefF6Proof1}
\mathcal{F}_6(u) = \frac{1}{2}\int_M\vert\nabla u\vert^2dv_g + \frac{1}{16}\int_MS_gu^2dv_g - 
\frac{1}{6}\int_M\vert u\vert^6dv_g\hskip.1cm .
\end{equation}
By \eqref{MainAssumpt} and \eqref{IneqC0Ex1},
\begin{equation}\label{CompFctsProof1Eqt1}
\max_{0\le t\le T}I_6(tu_\alpha) \le \max_{0\le t\le T}\mathcal{F}_6(tu_\alpha)
\end{equation}
for all $\alpha \gg 1$. Fix $u_0 = u_\alpha$ for $\alpha \gg 1$, sufficiently large such that \eqref{QuotSch} holds true, 
and let $T_0 = T$. For $\varepsilon > 0$ sufficiently small, $I_p(T_0u_0) < 0$ and
\begin{equation}\label{CompFctsProof1Eqt2}
\max_{0\le t\le T_0}I_p(tu_0) \le (1+\delta_\varepsilon)\max_{0\le t\le T_0}I_6(tu_0)
\end{equation}
for all $p \in (6-\varepsilon,6)$, where $\delta_\varepsilon >0$ is such that $\delta_\varepsilon \to 0$ 
as $\varepsilon \to 0$. As is easily checked, differentiating $\mathcal{F}_6(tu_0)$ 
with respect to $t$, we get that
\begin{equation}\label{MaxF6Proof1}
\max_{0 \le t \le T_0}\mathcal{F}_6(tu_0) \le \left(\frac{1}{2}-\frac{1}{6}\right) 
\left(\frac{\int_M\left(\vert\nabla u_0\vert^2+\frac{1}{8}S_gu_0^2\right)dv_g}{\left(\int_Mu_0^6dv_g\right)^{1/3}}\right)^{3/2}\hskip.1cm .
\end{equation}
We have that
$$c_p \le \max_{0\le t\le T_0}I_p(tu_0)\hskip.1cm ,$$
where $c_p$ is as in \eqref{Eqt2PExist1}. 
By \eqref{QuotSch}, \eqref{CompFctsProof1Eqt1}, \eqref{CompFctsProof1Eqt2}, and \eqref{MaxF6Proof1}, we then get 
that there exists $\delta_0 > 0$ such that for $0 < \varepsilon \ll 1$ sufficiently small, 
\begin{equation}\label{Proof2ExEqt3}
\delta_0 \le c_p \le \frac{1}{3K_3^3} - \delta_0
\end{equation}
for all $p \in (6-\varepsilon,6)$. Let $u_p$, $p < 6$, be the solution obtained in the subcritical part of the proof 
of existence. We have, see \eqref{Levelcp}, that $I_p(u_p) = c_p$ and also that 
$I_p^\prime(u_p) = 0$. In particular,
\begin{equation}\label{EnerEqtUp1}
\frac{1}{2}\int_M\left(\vert\nabla u_p\vert^2 + au_p^2\right)dv_g
= c_p + \frac{1}{p}\int_Mu_p^pdv_g + \frac{\omega^2}{2}\int_M\left(1-q\Phi(u_p)\right)u_p^2dv_g\hskip.1cm ,
\end{equation}
and
\begin{equation}\label{EnerEqtUp2}
\int_M\left(\vert\nabla u_p\vert^2 + au_p^2\right)dv_g = \int_Mu_p^pdv_g 
+ \omega^2\int_M\left(1-q\Phi(u_p)\right)^2u_p^2dv_g\hskip.1cm .
\end{equation}
Let $(p_\alpha)_\alpha$ be a sequence such that $p_\alpha < 6$ for all $\alpha$ and $p_\alpha\to 6$ as $\alpha \to +\infty$. 
Let $v_\alpha = u_{p_\alpha}$. Substracting \eqref{EnerEqtUp1}-$\frac{1}{2}$\eqref{EnerEqtUp2}, we get 
from \eqref{EqtDbleIneq} and \eqref{Proof2ExEqt3} that the sequence consisting of the $\Vert v_\alpha\Vert_{L^{p_\alpha}}$'s is bounded. Then, 
by \eqref{EnerEqtUp1}, $(v_\alpha)_\alpha$ is bounded in $H^1$. 
In particular, there exists $u \in H^1$, $u \ge 0$, such that $v_\alpha \rightharpoonup u$ weakly in $H^1$, 
$v_\alpha \to u$ strongly in $L^3$, and $v_\alpha \to u$ a.e. as $\alpha \to +\infty$. Since $I_{p_\alpha}^\prime(v_\alpha) = 0$, it follows from 
\eqref{LipsType}--\eqref{LipsType2} that there also holds that $I_6^\prime(u) = 0$. By the Trudinger \cite{Tru} regularity argument 
developed for critical Schr\"odinger equations, by standard 
elliptic regularity, 
and by the maximum principle, either $u \equiv 0$, or $u > 0$ in $M$ and $u, \Phi(u)$ are smooth. Thus it remains only to prove that 
$u\not\equiv 0$. By the sharp Sobolev inequality, as established in Hebey and Vaugon \cite{HebVau1,HebVau2}, there exists $B >0$ such  
that
\begin{equation}\label{ConclEqtExistThm1Eqt1}
\left(\int_Mv_\alpha^{p_\alpha}dv_g\right)^{2/p_\alpha} \le \left(K_3^2+o(1)\right)\int_M\left(\vert\nabla v_\alpha\vert^2+av_\alpha^2\right)dv_g 
+ B \int_Mv_\alpha^2dv_g
\end{equation}
for all $\alpha$. By contradiction we assume $u\equiv 0$. Substracting \eqref{EnerEqtUp1}-$\frac{1}{p_\alpha}$\eqref{EnerEqtUp2} 
we get that
\begin{equation}\label{ConclEqtExistThm1Eqt2}
\int_M\left(\vert\nabla v_\alpha\vert^2+av_\alpha^2\right)dv_g = \frac{2p_\alpha}{p_\alpha-2}c_{p_\alpha} + o(1)\hskip.1cm ,
\end{equation}
and we also have that
\begin{equation}\label{ConclEqtExistThm1Eqt3}
\int_M\left(\vert\nabla v_\alpha\vert^2+av_\alpha^2\right)dv_g = 
\int_Mv_\alpha^{p_\alpha}dv_g + o(1)\hskip.1cm .
\end{equation}
Inserting \eqref{ConclEqtExistThm1Eqt2}--\eqref{ConclEqtExistThm1Eqt3} into \eqref{ConclEqtExistThm1Eqt1} we obtain that
\begin{equation}\label{LastEqtExistProof}
\left(\frac{2p_\alpha}{p_\alpha-2}c_{p_\alpha} + o(1)\right)^{2/p_\alpha} \le 
K_3^2\left(\frac{2p_\alpha}{p_\alpha-2}c_{p_\alpha} + o(1)\right)
\end{equation}
for all $\alpha$, and the contradiction follows from \eqref{Proof2ExEqt3} by letting $\alpha \to +\infty$ in \eqref{LastEqtExistProof}. This ends the 
proof of the existence part of Theorem \ref{MainThm} when we assume \eqref{MainAssumpt} 
with the property that the inequality in \eqref{MainAssumpt} is strict at some point when $(M,g)$ is conformally diffeomorphic to the unit $3$-sphere.
\end{proof}

At this point it remains to prove the existence part in 
Theorem \ref{MainThm} when $p=6$ and $(M,g)$ is conformally diffeomorphic to the $3$-sphere. 
This is the subject of what follows.

\begin{proof}[Proof of the Existence Part in Theorem \ref{MainThm} when $p = 6$. Case 2] We assume that  $(M,g)$ is conformally diffeomorphic to the unit $3$-sphere. 
Without loss of generality we can assume that $M = S^3$ and that $g = \varphi^4g_0$ for some smooth positive function $\varphi > 0$. 
We let $(\beta_\alpha)_\alpha$ be any sequence 
of real numbers such that $\beta_\alpha > 1$ for all $\alpha$ and $\beta_\alpha \to 1$ as $\alpha \to +\infty$. We fix $x_0 \in S^3$ and define the functions 
$\varphi_\alpha: S^3\to\mathbb{R}$ by
\begin{equation}\label{DefVarphialp}
\varphi_\alpha(x) = \frac{\left(3(\beta_\alpha^2-1)\right)^{1/4}}{\varphi\sqrt{2\left(\beta_\alpha - \cos r\right)}}\hskip.1cm ,
\end{equation}
where $r = d_{g_0}(x_0,x)$. We have that
\begin{equation}\label{EqtPhialp}
\Delta_g\varphi_\alpha + \frac{1}{8}S_g\varphi_\alpha = \varphi_\alpha^5
\end{equation}
and that 
\begin{equation}\label{EqtPhialpExtr}
\left(\int_{S^3}\varphi_\alpha^6dv_g\right)^{1/3} = K_3^2\int_{S^3}\left(\vert\nabla\varphi_\alpha\vert^2 + \frac{1}{8}S_g\varphi_\alpha^2\right)dv_g
\end{equation}
for all $\alpha$. A possible reference in book form for \eqref{EqtPhialp} and \eqref{EqtPhialpExtr} is Hebey 
\cite{CIMSBook}. It follows from \eqref{DefVarphialp}-\eqref{EqtPhialpExtr} that
\begin{equation}\label{EnerLevel}
\int_{S^3}\vert\nabla\varphi_\alpha\vert^2dv_g = \frac{1}{K_3^3} + o(1)
\hskip.2cm\hbox{and}\hskip.2cm
\int_{S^3}\varphi_\alpha^6dv_g = \frac{1}{K_3^3} 
\end{equation}
for all $\alpha$, while $\varphi_\alpha \to 0$ in $L^q$ for all $q < 6$ as $\alpha \to +\infty$. If $\lambda = 0$ or $\omega = 0$, then, by \eqref{MainAssumpt}, either 
$a \equiv \frac{1}{8}S_g$ or $a \le \frac{1}{8}S_g$, the inequality being strict at least at one point. In the first case,
by \eqref{EqtPhialp}, for any $\alpha$, 
$(\varphi_\alpha,\frac{1}{q})$ is a solution of \eqref{SWSyst} if $\lambda = 0$ while 
$\left(\varphi_\alpha,\Phi(\varphi_\alpha)\right)$ is a solution of \eqref{SWSyst} if $\lambda > 0$. In the second case, by \eqref{EqtPhialpExtr},
$$\frac{\int_{S^3}\left(\vert\nabla\varphi_\alpha\vert^2+a\varphi_\alpha^2\right)dv_g}{\left(\int_{S^3}\varphi_\alpha^6dv_g\right)^{1/3}} 
< \frac{1}{K_3^2}$$
and it follows from Aubin \cite{Aub1,Aub2} that there exists $u$ smooth and positive such that
$$\Delta_gu + au = u^5\hskip.1cm .$$
Then $(u,\frac{1}{q})$ solves \eqref{SWSyst} if $\lambda = 0$ while $\left(u,\Phi(u)\right)$ solves \eqref{SWSyst} if $\lambda > 0$.  
We may thus assume that $\lambda > 0$ and $\omega^2 > 0$. By \eqref{EnerLevel} there exists $T > 0$, independent 
of $\alpha$, such that $I_6(T\varphi_\alpha) < 0$ for all $\alpha \gg 1$. Let $(t_\alpha)_\alpha$ be any 
sequence in $[0,T]$. By \eqref{LipsType}, since $\Phi(0) = 0$, there holds that $\Phi(t_\alpha\varphi_\alpha) \to 0$ 
in $H^1$ as $\alpha \to +\infty$. By \eqref{DefPhi} and \eqref{EqtDbleIneq} we then get that $\Phi(t_\alpha\varphi_\alpha) \to 0$ 
in $H^{2,q}$ for all $q < 3$ as $\alpha \to +\infty$. In particular, $\Phi(t_\alpha\varphi_\alpha) \to 0$ in $L^\infty$ as $\alpha \to +\infty$, and we can write that
\begin{equation}\label{C0Conv}
\max_{0 \le t \le T}\Vert\Phi(t\varphi_\alpha)\Vert_{L^\infty} \to 0
\end{equation}
as $\alpha \to +\infty$. Since $k\lambda < 1$, it follows from \eqref{C0Conv} that there exists $\varepsilon_0 > 0$ such that for any $\alpha \gg 1$, and any $t \in [0,T]$,
\begin{equation}\label{IneqC0Ex1bis}
\int_M\left(1-q\Phi(t\varphi_\alpha)\right)u_\alpha^2dv_g \ge \left(k\lambda+\frac{\varepsilon_0}{\omega^2}\right) \int_Mu_\alpha^2dv_g
\hskip.1cm .
\end{equation}
Let $\mathcal{H}_6$ be the functional defined on $H^1$ by
\begin{equation}\label{DefF6}
\mathcal{H}_6(u) = \frac{1}{2}\int_{S^3}\vert\nabla u\vert^2dv_g + \frac{1}{2}\int_{S^3}\left(\frac{1}{8}S_g-\varepsilon_0\right)u^2dv_g - 
\frac{1}{6}\int_{S^3}\vert u\vert^6dv_g\hskip.1cm .
\end{equation}
Then, by \eqref{MainAssumpt} and \eqref{IneqC0Ex1bis},
\begin{equation}\label{IneqF6}
\max_{0\le t\le T}I_6(t\varphi_\alpha) \le \max_{0\le t\le T}\mathcal{H}_6(t\varphi_\alpha)
\end{equation}
for all $\alpha \gg 1$. Fix $u_0 = \varphi_\alpha$ for $\alpha \gg 1$, and let $T_0 = T$. Then, for $\varepsilon > 0$ sufficiently small, we 
can write that $I_p(T_0u_0) < 0$ and 
\begin{equation}\label{IneqIpI6}
\max_{0\le t\le T_0}I_p(tu_0) \le \left(1+\delta_\varepsilon\right)\max_{0\le t\le T_0}I_6(tu_0)
\end{equation}
for all $p \in (6-\varepsilon,6)$, where $\delta_\varepsilon \to 0$ as $\varepsilon \to 0$. We have that
\begin{equation}\label{MaxF6}
 \max_{0 \le t \le T_0}\mathcal{H}_6(tu_0) \le \left(\frac{1}{2}-\frac{1}{6}\right) 
 \left(\frac{\int_{S^3}\left(\vert\nabla u_0\vert^2+\left(\frac{1}{8}S_g-\varepsilon_0\right)u_0^2\right)dv_g}{\left(\int_{S^3}u_0^6dv_g\right)^{1/3}}\right)^{3/2}\hskip.1cm .
 \end{equation}
By \eqref{EqtPhialp} and \eqref{EqtPhialpExtr}, since $\varepsilon_0 > 0$, it follows from \eqref{MaxF6} that
\begin{equation}\label{MaxF6Bis}
 \max_{0 \le t \le T_0}\mathcal{H}_6(tu_0) < \frac{1}{3K_3^3}\hskip.1cm .
\end{equation}
As a consequence of \eqref{Eqt2PExist1}, \eqref{IneqF6}, \eqref{IneqIpI6} and \eqref{MaxF6Bis}, we get that 
there exists $\delta_0 > 0$ such that for $0 < \varepsilon \ll 1$ sufficiently small, 
\begin{equation}\label{Proof2ExEqt32ndPart}
\delta_0 \le c_p \le \frac{1}{3K_3^3} - \delta_0
\end{equation}
for all $p \in (6-\varepsilon,6)$. Then we conclude as in case 1 of the proof. This ends the proof of the existence part in Theorem \ref{MainThm}.
\end{proof}

\section{Proof of the uniform bound in Theorem \ref{MainThm} when $p \in (2,6)$}\label{Sect2}

We prove the uniform bounds in Theorem \ref{MainThm} when $p \in (2,6)$. Let $(\omega_\alpha)_\alpha$ be a sequence in $(-\omega_a,\omega_a)$ such that 
$\omega_\alpha \to \omega$ as $\alpha \to +\infty$ for some $\omega \in [-\omega_a,\omega_a]$. Also let 
$p \in (2,6)$ and $\bigl((u_\alpha,v_\alpha)\bigr)_\alpha$ be a sequence 
of smooth positive solutions of \eqref{SWSyst} with phases $\omega_\alpha$. Then,
\begin{equation}\label{SWSystAlpha}
\begin{cases}
\Delta_gu_\alpha + au_\alpha = u_\alpha^{p-1} + \omega_\alpha^2\left(qv_\alpha-1\right)^2u_\alpha\\
\Delta_gv_\alpha + \left(\lambda+q^2u_\alpha^2\right)v_\alpha = qu_\alpha^2
\end{cases}
\end{equation}
for all $\alpha$. Since $u_\alpha > 0$, we get with the second equation in \eqref{SWSystAlpha}, see \eqref{Eqt0LemC1}, that 
$0 \le v_\alpha \le \frac{1}{q}$ for all $\alpha$. Assume by contradiction that
\begin{equation}\label{ContrAssumptSubCptness1}
\max_Mu_\alpha \to +\infty
\end{equation}
as $\alpha \to +\infty$. Let $x_\alpha \in M$ and $\mu_\alpha > 0$ be given by
$$u_\alpha(x_\alpha) = \max_Mu_\alpha = \mu_\alpha^{-2/(p-2)}\hskip.1cm .$$
By \eqref{ContrAssumptSubCptness1}, $\mu_\alpha \to 0$ as $\alpha \to +\infty$. Define $\tilde u_\alpha$ by
$$\tilde u_\alpha(x) = \mu_\alpha^{\frac{2}{p-2}}u_\alpha\left(\exp_{x_\alpha}(\mu_\alpha x)\right)$$
and $g_\alpha$ by $g_\alpha(x) = \left(\exp_{x_\alpha}^\star g\right)(\mu_\alpha x)$ for $x \in B_0(\delta\mu_\alpha^{-1})$, 
where $\delta > 0$ is small. Since $\mu_\alpha \to 0$, we get that $g_\alpha \to \xi$ in $C^2_{loc}(\mathbb{R}^3)$ as $\alpha \to +\infty$. 
Moreover, by \eqref{SWSystAlpha},
\begin{equation}\label{EqtTildeUAlpha}
\Delta_{g_\alpha}\tilde u_\alpha + \mu_\alpha^2\hat a_\alpha\tilde u_\alpha = \tilde u_\alpha^{p-1} 
+ \omega_\alpha^2\mu_\alpha^2\left(q\hat v_\alpha - 1\right)^2\tilde u_\alpha\hskip.1cm ,
\end{equation}
where $\hat a_\alpha$ and $\hat v_\alpha$ are given by
$$\hat a_\alpha(x) = a\left(\exp_{x_\alpha}(\mu_\alpha x)\right)
\hskip.2cm\hbox{and}\hskip.2cm
\hat v_\alpha(x) = v_\alpha\left(\exp_{x_\alpha}(\mu_\alpha x)\right)\hskip.1cm .$$
In addition, $\tilde u_\alpha(0) = 1$ and $0 \le \tilde u_\alpha \le 1$. By \eqref{EqtTildeUAlpha} and standard elliptic theory 
arguments, we can write that, after passing to a subsequence, $\tilde u_\alpha \to u$ in $C^{1,\theta}_{loc}(\mathbb{R}^3)$ 
as $\alpha \to +\infty$, where $u$ is such that $u(0) = 1$ and $0 \le u \le 1$. Then
$$\Delta_\xi u = u^{p-1}$$
in $\mathbb{R}^3$, where $\Delta_\xi$ is the Euclidean Laplacian. It follows that $u$ is actually smooth and positive, and, 
since $2 < p < 6$, we get 
a contradiction with the Liouville result of Gidas and Spruck \cite{GidSpr}. As a conclusion, \eqref{ContrAssumptSubCptness1} is 
not possible and there exists $C > 0$ such that
\begin{equation}\label{ConclContr1}
u_\alpha + v_\alpha \le C
\end{equation}
in $M$ for all $\alpha$. Coming back to \eqref{SWSystAlpha} it follows that the sequences $(u_\alpha)_\alpha$ and $(v_\alpha)_\alpha$ 
are actually bounded in $H^{2,q}$ for all $q$. Pushing one step further the regularity argument they turn out to be bounded in $H^{3,q}$ for all $q$, 
and by the Sobolev embedding theorem we get that they are also 
bounded in $C^{2,\theta}$, $0 < \theta < 1$. This ends the proof of the uniform bounds in Theorem \ref{MainThm} when $p \in (2,6)$. 

\medskip As a remark on the above proof it is necessary to assume that $u_\alpha \not\equiv 0$ since if not the case, when 
$\lambda = 0$, couples like $(0,t)$ solve \eqref{SWSyst} for all $t > 0$. Assuming $u_\alpha \ge 0$, $u_\alpha\not\equiv 0$, we get that 
$u_\alpha > 0$ in $M$ and also that $v_\alpha > 0$ in $M$. An operator like $\Delta_g + h$ is said to be coercive if its energy is a possible norm for $H^1$ 
or, in an equivalent way, if there exists $C > 0$ such that
$$C\int_Mu^2dv_g \le \int_M\left(\vert\nabla u\vert^2+hu^2\right)dv_g$$
for all $u \in H^1$. A complementary lemma is as follows.

\begin{lem}\label{LemNonZer} Let $(\omega_\alpha)_\alpha$ be a sequence in $(-\omega_a,\omega_a)$ such that 
$\omega_\alpha \to \omega$ as $\alpha \to +\infty$ for some $\omega \in [-\omega_a,\omega_a]$, 
$p \in (2,6]$, and $\bigl((u_\alpha,v_\alpha)\bigr)_\alpha$ be a sequence 
of smooth positive solutions of \eqref{SWSystAlpha}. Assume that the operator $\Delta_g + (a-\omega^2)$ is coercive. Let $u_\alpha \to u$ and $v_\alpha \to v$ 
in $C^2$ as $\alpha \to +\infty$. Then $u > 0$, $v > 0$, and $u, v$ are smooth solutions of \eqref{SWSyst}.
\end{lem}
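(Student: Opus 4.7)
The plan is to pass to the limit in \eqref{SWSystAlpha} as $\alpha \to +\infty$ and then use the coercivity hypothesis to rule out the trivial limit. By the $C^2$ convergence $u_\alpha \to u$ and $v_\alpha \to v$, both equations in \eqref{SWSystAlpha} pass termwise to the limit, so $(u,v)$ is a $C^2$ solution of \eqref{SWSyst} with phase $\omega$. Schauder bootstrapping, applied alternately to the two equations with smooth coefficients, upgrades $(u,v)$ to a smooth pair. Since $u_\alpha > 0$ and $0 \le v_\alpha \le 1/q$ by the computation leading to \eqref{Eqt0LemC1}, the $C^0$ convergence gives $u \ge 0$ and $0 \le v \le 1/q$ pointwise. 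It remains to show that $u > 0$ and $v > 0$ on $M$.

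The main obstacle, which the coercivity hypothesis is precisely designed to handle, is to exclude $u \equiv 0$. I argue by contradiction: assume $u \equiv 0$, set $M_\alpha = \max_M u_\alpha \to 0$, and rescale $w_\alpha = M_\alpha^{-1} u_\alpha$. Then $\Vert w_\alpha\Vert_{L^\infty} = 1$ and
\begin{equation*}
\Delta_g w_\alpha + a w_\alpha = M_\alpha^{p-2} w_\alpha^{p-1} + \omega_\alpha^2(qv_\alpha - 1)^2 w_\alpha,
\end{equation*}
whose right-hand side is uniformly bounded in $L^\infty$. Standard $L^q$-elliptic theory gives a uniform $C^{1,\theta}$ bound on $w_\alpha$, so along a subsequence $w_\alpha \to w$ in $C^1$ with $\Vert w\Vert_{L^\infty} = 1$, in particular $w \not\equiv 0$. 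To identify the limit equation, I distinguish two cases. If $\lambda > 0$, passing to the limit in the second equation of \eqref{SWSystAlpha} yields $\Delta_g v + \lambda v = 0$, which by coercivity of $\Delta_g + \lambda$ forces $v \equiv 0$. If $\lambda = 0$, integrating $\Delta_g v_\alpha + q^2 u_\alpha^2 v_\alpha = q u_\alpha^2$ over $M$ gives $\int_M q u_\alpha^2 (1 - q v_\alpha)\, dv_g = 0$, and since $u_\alpha > 0$ and $qv_\alpha \le 1$, this forces $v_\alpha \equiv 1/q$, hence $v \equiv 1/q$. In either case, letting $\alpha \to +\infty$ in the rescaled equation produces
\begin{equation*}
\Delta_g w + \bigl(a - \omega^2(qv-1)^2\bigr) w = 0,
\end{equation*}
where the potential is $a - \omega^2$ when $\lambda > 0$ and $a$ when $\lambda = 0$. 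The first operator is coercive by hypothesis; the second dominates the first since $\omega^2 \ge 0$, so it too is coercive. Testing this equation against $w$ therefore forces $w \equiv 0$, contradicting $\Vert w\Vert_{L^\infty} = 1$.

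Having established $u \not\equiv 0$, the strong maximum principle applied to the first equation of \eqref{SWSyst}, viewed as a linear equation $\Delta_g u + b\, u = 0$ with $b = a - \omega^2(qv-1)^2 - u^{p-2}$ bounded, forces $u > 0$ throughout $M$. Inserting this into the second equation gives $\Delta_g v + (\lambda + q^2 u^2) v = q u^2 > 0$ on $M$, so a zero of $v$ would be an interior minimum violating the strong maximum principle; hence $v > 0$ on $M$. I expect the blow-down rescaling that rules out $u \equiv 0$ to be the only delicate step, the remainder being routine elliptic regularity and maximum principle work.
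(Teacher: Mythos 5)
Your proof is correct, but it reaches the key non-vanishing conclusion $u\not\equiv 0$ by a genuinely different mechanism than the paper. The paper's argument is a short energy estimate: testing the first equation of \eqref{SWSystAlpha} against $u_\alpha$ and using $0\le v_\alpha\le \frac{1}{q}$ (so $(qv_\alpha-1)^2\le 1$) together with $\omega_\alpha^2\to\omega^2$, one gets for $\alpha$ large
\begin{equation*}
\int_M\left(\vert\nabla u_\alpha\vert^2+(a-\omega^2-\varepsilon)u_\alpha^2\right)dv_g \le \int_Mu_\alpha^pdv_g \le C\left(\int_M\left(\vert\nabla u_\alpha\vert^2+(a-\omega^2-\varepsilon)u_\alpha^2\right)dv_g\right)^{p/2},
\end{equation*}
the last inequality coming from the Sobolev embedding and the coercivity of $\Delta_g+(a-\omega^2-\varepsilon)$; since $p>2$ this pins the coercive quadratic form away from $0$ uniformly in $\alpha$, so the $C^2$-limit $u$ cannot vanish. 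Your blow-down argument ($w_\alpha=u_\alpha/\max_Mu_\alpha$, passage to a limiting linear equation, coercivity forcing $w\equiv 0$) achieves the same thing and is sound: the $W^{2,q}$--$C^{1,\theta}$ bounds are justified, the identification of the limiting gauge ($v\equiv 0$ for $\lambda>0$ via $\Delta_gv+\lambda v=0$, and $v_\alpha\equiv \frac{1}{q}$ for $\lambda=0$ by integrating the second equation against the constant $1$) is correct, and the concluding maximum-principle steps are the same as the paper's. The trade-off: your route is longer and requires the case analysis on $\lambda$ to identify the limiting potential, but it yields a little extra information -- in particular it shows that coercivity is not needed at all when $\lambda=0$, which is consistent with the paper's counterexample to the lemma at $\omega^2=a$, where $\lambda>0$ -- whereas the paper's route is a four-line computation that avoids rescaling entirely.
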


\begin{proof} Assume that $\Delta_g + (a-\omega^2)$ is coercive. Then, for $\varepsilon > 0$ sufficiently small, 
$\Delta_g + (a-\omega^2-\varepsilon)$ is still coercive. Since $u_\alpha > 0$ in $M$ there holds that $0 \le v_\alpha \le \frac{1}{q}$ for all $\alpha$. 
In particular, by \eqref{SWSystAlpha} and the Sobolev inequality, for any $\alpha \gg 1$ sufficiently large,
\begin{eqnarray*} 
&&\int_M\left(\vert\nabla u_\alpha\vert^2+ \left(a-\omega^2-\varepsilon\right)u_\alpha^2\right)dv_g\\
&&\le \int_M\vert\nabla u_\alpha\vert^2dv_g+ \int_Mau_\alpha^2dv_g - \omega_\alpha^2\int_M(qv_\alpha^2-1)^2u_\alpha^2dv_g\\
&&= \int_Mu_\alpha^pdv_g\\
&&\le C\left(\int_M\left(\vert\nabla u_\alpha\vert^2+ \left(a-\omega^2-\varepsilon\right)u_\alpha^2\right)dv_g\right)^{p/2}
\end{eqnarray*}
for some $C > 0$ independent of $\alpha$. This implies $u > 0$ and then $v > 0$. The lemma follows.
\end{proof}

As a remark, Lemma \ref{LemNonZer} does not hold anymore if we allow $\Delta_g + (a-\omega^2)$ not to be coercive. Suppose $\lambda > 0$, $a >0$ is a positive constant, and let 
$(\varepsilon_\alpha)_\alpha$ be a sequence of positive real numbers such that $\varepsilon_\alpha \to 0$ as 
$\alpha \to +\infty$. Assuming $a$ is constant we have that $\omega_a^2 = a$. Let $u_\alpha = \varepsilon_\alpha$ and
$$v_\alpha = \frac{q\varepsilon_\alpha^2}{\lambda + q^2\varepsilon_\alpha^2}\hskip.1cm .$$
Then $u_\alpha \to 0$ and 
$v_\alpha \to 0$ in $C^2$ as $\alpha \to +\infty$, and we do have that $(u_\alpha,v_\alpha)$ solves \eqref{SWSystAlpha} , where 
$$\omega_\alpha^2 = \frac{1}{(qv_\alpha-1)^2}\left(\omega_a^2-\varepsilon_\alpha^{p-2}\right)\hskip.1cm .$$
Noting that $\omega_\alpha \to \omega_a$ as $\alpha \to +\infty$, the construction provides a counter example to Lemma 
\ref{LemNonZer} when $\omega^2 = a$. As an independent remark, $\Delta_g + (a-\omega^2)$ is automatically coercive when $\omega \in (-\omega_a,\omega_a)$. 
Also we may allow $\omega=\pm\omega_a$ if $\Delta_g + (a-\min_Ma)$ is coercive, and this is automatically the case if $a$ is nonconstant. 

\section{Sharp blow-up estimates when $p=6$}\label{blowupest}

In what follows we let $(M,g)$ be a smooth compact $3$-dimensional Riemannian manifold, 
$a > 0$ be a smooth positive function in $M$, and $(\omega_\alpha)_\alpha$ be a sequence in $(-\omega_a,\omega_a)$ such that 
$\omega_\alpha \to \omega$ as $\alpha \to +\infty$ for some $\omega \in [-\omega_a,\omega_a]$, where $\omega_a$ is as in \eqref{DefOm0a}. Also we let 
$\bigl((u_\alpha,v_\alpha)\bigr)_\alpha$ be a sequence 
of smooth positive solutions of \eqref{SWSyst} with phases $\omega_\alpha$ and $p = 6$. Namely,
\begin{equation}\label{SWSystAlphaCrit}
\begin{cases}
\Delta_gu_\alpha + au_\alpha = u_\alpha^5 + \omega_\alpha^2\left(qv_\alpha-1\right)^2u_\alpha\\
\Delta_gv_\alpha + \left(\lambda+q^2u_\alpha^2\right)v_\alpha = qu_\alpha^2
\end{cases}
\end{equation}
for all $\alpha$. Since $u_\alpha > 0$, we get with the second equation in \eqref{SWSystAlphaCrit}, see \eqref{Eqt0LemC1}, that 
$0 \le v_\alpha \le \frac{1}{q}$ for all $\alpha$. In particular, if we let
\begin{equation}\label{Defhalpha}
h_\alpha = a - \omega_\alpha^2\left(qv_\alpha-1\right)^2\hskip.1cm ,
\end{equation}
then $\Vert h_\alpha\Vert_{L^\infty} \le C$ for all $\alpha$, where $C > 0$ is independent of $\alpha$. 
Assume by contradiction that
\begin{equation}\label{ContrAssumptSubCptness}
\max_Mu_\alpha \to +\infty
\end{equation}
as $\alpha \to +\infty$. In what follows we let $(x_\alpha)_\alpha$ be a sequence of points in $M$, and $(\rho_\alpha)_\alpha$ be a 
sequence of positive real numbers, $0 < \rho_\alpha < i_g/7$ for all $\alpha$, where $i_g$ is the injectivity radius 
of $(M,g)$. We assume that the $x_\alpha$'s and $\rho_\alpha$'s satisfy
\begin{equation}\label{hypCdts}
\begin{cases}
\nabla u_\alpha(x_\alpha) = 0\hskip.1cm\hbox{for all}\hskip.1cm \alpha ,\\
d_g(x_\alpha,x)^{\frac{1}{2}}u_\alpha(x) \le C\hskip.1cm\hbox{for all}\hskip.1cm x\in B_{x_\alpha}(7\rho_\alpha)\hskip.1cm\hbox{and all}\hskip.1cm \alpha\hskip.1cm ,\\
\lim_{\alpha \to +\infty}\rho_\alpha^{\frac{1}{2}}\sup_{B_{x_\alpha}(6\rho_\alpha)}u_\alpha(x) = +\infty\hskip.1cm .
\end{cases}
\end{equation}
We let $\mu_\alpha$ be given by
\begin{equation}\label{DefMualpha}
\mu_\alpha = u_\alpha(x_\alpha)^{-2}\hskip.1cm .
\end{equation}
Since the $h_\alpha$'s in \eqref{Defhalpha} are $L^\infty$-bounded we can apply the asymptotic analysis in Druet and Hebey \cite{DruHebEinstLich} and 
Druet, Hebey and V\'etois \cite{DruHebVet}. Closely related arguments were first developed by Schoen \cite{SchNotes}, and then 
by Druet \cite{Dru2} and Li and Zhu \cite{LiZhu} assuming $C^1$-convergences of the potentials. Assuming \eqref{hypCdts}, and 
coming back to the analysis in Druet, Hebey and V\'etois \cite{DruHebVet}, 
we can write that $\frac{\rho_\alpha}{\mu_\alpha} \to +\infty$ as 
$\alpha \to +\infty$ and that
\begin{equation}\label{Limualpha}
\mu_\alpha^{\frac{1}{2}}u_\alpha\left(\exp_{x_\alpha}(\mu_\alpha x)\right) \to \left(1 + \frac{\vert x\vert^2}{3}\right)^{-\frac{1}{2}}
\end{equation}
in $C^1_{loc}(\mathbb{R}^3)$ as $\alpha \to +\infty$, where $\mu_\alpha$ is as in \eqref{DefMualpha}. In particular we have that 
$\mu_\alpha \to 0$ 
as $\alpha \to +\infty$. 
Now we define $\varphi_\alpha : \left(0,\rho_\alpha\right)\mapsto {\mathbb R}^+$ by 
\begin{equation}\label{eq3.1}
\varphi_\alpha(r)= \frac{1}{\left\vert \partial B_{x_\alpha}\left(r\right)\right\vert_g} 
\int_{\partial B_{x_\alpha}\left(r\right)} u_\alpha d\sigma_g\hskip.1cm ,
\end{equation}
where $\left\vert \partial B_{x_\alpha}\left(r\right)\right\vert_g$ is the volume of the sphere of center $x_\alpha$ 
and radius $r$ for the induced metric. 
Let $\Lambda = 2\sqrt{3}$. 
We define $r_\alpha\in \left[\Lambda\mu_\alpha, \rho_\alpha\right]$ by 
\begin{equation}\label{eq3.3}
r_\alpha = \sup\left\{r\in \left[\Lambda\mu_\alpha, \rho_\alpha\right]\hbox{ s.t. } \left(s^{\frac{1}{2}}\varphi_\alpha(s)\right)' \le 0 
\hskip.1cm\hbox{in}\hskip.1cm\left[\Lambda\mu_\alpha,r\right]\right\}\hskip.1cm .
\end{equation} 
It follows from \eqref{Limualpha} that 
\begin{equation}\label{eq3.4}
\frac{r_\alpha}{\mu_\alpha}\to +\infty
\end{equation}
as $\alpha \to +\infty$, while the definition of $r_\alpha$ gives that 
\begin{equation}\label{eq3.5}
r^{\frac{1}{2}}\varphi_\alpha \hbox{ is non-increasing in }\left[\Lambda\mu_\alpha,r_\alpha\right]
\end{equation}
and that 
\begin{equation}\label{eq3.6}
\left(r^{\frac{1}{2}}\varphi_\alpha(r)\right)'\left(r_\alpha\right)=0\hbox{ if }r_\alpha<\rho_\alpha\hskip.1cm.
\end{equation}
We prove that the following sharp asymptotic estimates on the $u_\alpha$'s in \eqref{SWSystAlphaCrit} hold true.

\begin{lem}\label{SharpEst} Let $(M,g)$ be a smooth compact Riemannian $3$-dimensional manifold, 
and $\bigl((u_\alpha,v_\alpha)\bigr)_\alpha$ be a sequence 
of smooth positive solutions of \eqref{SWSystAlphaCrit} such that \eqref{ContrAssumptSubCptness} holds true.
Let $(x_\alpha)_\alpha$ and $(\rho_\alpha)_\alpha$ be such that \eqref{hypCdts} hold true, and let $R \ge 6$ be such that 
$Rr_\alpha \le 6\rho_\alpha$ for all $\alpha \gg 1$. There exists $C > 0$ such that, after passing to a subsequence,
\begin{equation}\label{SharpEstCtrl}
u_\alpha(x) + d_g(x_\alpha,x)\left\vert\nabla u_\alpha(x)\right\vert \le 
C\mu_\alpha^{\frac{1}{2}}d_g(x_\alpha,x)^{-1}
\end{equation}
for all $x \in B_{x_\alpha}(\frac{R}{2}r_\alpha)\backslash\left\{x_\alpha\right\}$ and all $\alpha$, where
$\mu_\alpha$ is as in \eqref{DefMualpha}, and where $r_\alpha$ 
is as in \eqref{eq3.3}.
\end{lem}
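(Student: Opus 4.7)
The plan is to prove the pointwise bound on $u_\alpha$ first, then obtain the gradient estimate by rescaling and standard interior elliptic regularity. On the bubble core $B_{x_\alpha}(R_0\mu_\alpha)$ for any fixed large $R_0$, the estimate \eqref{SharpEstCtrl} is immediate from the convergence \eqref{Limualpha}, since the limiting profile $U(z) = (1+|z|^2/3)^{-1/2}$ satisfies $|z|U(z) \le \sqrt{3}$ and the convergence is in $C^1_{loc}$. Everything therefore reduces to the annular region $A_\alpha := B_{x_\alpha}(\tfrac{R}{2} r_\alpha) \setminus B_{x_\alpha}(R_0\mu_\alpha)$.

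On $A_\alpha$, I would first derive the intermediate bound $u_\alpha(x) \le C d_g(x_\alpha, x)^{-1/2}$. Since $0 \le v_\alpha \le 1/q$, the potential $h_\alpha$ in \eqref{Defhalpha} is uniformly bounded in $L^\infty$, and the hypothesis \eqref{hypCdts} ensures that on each dyadic shell $B_{x_\alpha}(2r) \setminus B_{x_\alpha}(r/2)$ the rescaled equation has uniformly bounded coefficients; the standard Harnack inequality then gives $\max_{\partial B_{x_\alpha}(r)} u_\alpha \le C \varphi_\alpha(r)$, and combining this with the monotonicity \eqref{eq3.5} together with the fact that $(\Lambda \mu_\alpha)^{1/2} \varphi_\alpha(\Lambda \mu_\alpha) = O(1)$ (from \eqref{Limualpha}) yields the intermediate bound on $[\Lambda \mu_\alpha, r_\alpha]$, extended to $\tfrac{R}{2} r_\alpha$ by finitely many further Harnack iterations. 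To upgrade to \eqref{SharpEstCtrl} I would argue by contradiction in the style of Druet-Hebey-V\'etois: suppose $d_g(x_\alpha, y_\alpha) u_\alpha(y_\alpha)/\mu_\alpha^{1/2} \to +\infty$ for some $y_\alpha \in A_\alpha$ maximizing $d_g(x_\alpha, \cdot) u_\alpha$ over $A_\alpha$, set $\nu_\alpha := u_\alpha(y_\alpha)^{-2}$, and rescale at $y_\alpha$ at scale $\nu_\alpha$. The intermediate estimate and the equation \eqref{SWSystAlphaCrit} force this rescaling to produce a second standard bubble concentrating at $y_\alpha$ with scale $\nu_\alpha \ll d_g(x_\alpha, y_\alpha)$, which in turn forces $r \mapsto r^{1/2} \varphi_\alpha(r)$ to strictly increase across $r \sim d_g(x_\alpha, y_\alpha)$ in $[\Lambda \mu_\alpha, r_\alpha]$, contradicting \eqref{eq3.5}.

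The gradient estimate follows by scaling: for $x$ with $s := d_g(x_\alpha, x) \in [R_0 \mu_\alpha, \tfrac{R}{2} r_\alpha]$, set $w_\alpha(y) := s\, u_\alpha(\exp_x(sy))$ on $B_0(1/4)$. This satisfies an equation whose coefficients are uniformly bounded on $B_0(1/4)$ thanks to the pointwise bound just proved (in particular $s^2 u_\alpha(\exp_x(sy))^4 \le C$ there), and $\|w_\alpha\|_{L^\infty(B_0(1/4))} \le C\mu_\alpha^{1/2}$ from the same bound. Standard interior elliptic gradient estimates then give $|\nabla w_\alpha(0)| \le C\mu_\alpha^{1/2}$, which unrescales to the gradient part of \eqref{SharpEstCtrl}. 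The main obstacle is the contradiction step that rules out a secondary bubble at $y_\alpha$: combining bubble energy quantization in dimension three with the monotonicity constraint \eqref{eq3.5} is the decisive input, and this is where the specific scaling properties of the critical exponent are used. The inhomogeneous potential $h_\alpha$ and the coupling with $v_\alpha$ enter only as uniformly $L^\infty$-bounded lower-order terms, so they do not obstruct the scheme.
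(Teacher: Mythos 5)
Your treatment of the bubble core via \eqref{Limualpha}, and your derivation of the gradient bound from the pointwise bound by rescaling, are both fine (the paper gets the gradient part from the Harnack inequality \eqref{HarnackIneq}, which is the same mechanism). The genuine gap is in the central step: the upgrade from $u_\alpha\le C\,d_g(x_\alpha,\cdot)^{-1/2}$ (which, note, is already the second line of \eqref{hypCdts}, so there is nothing to derive there) to the sharp bound \eqref{SharpEstCtrl}. Your contradiction argument does not work. If $y_\alpha$ maximizes $d_g(x_\alpha,\cdot)u_\alpha$ over the annulus and $s_\alpha=d_g(x_\alpha,y_\alpha)$, then \eqref{hypCdts} gives $u_\alpha(y_\alpha)\le C s_\alpha^{-1/2}$, hence $\nu_\alpha=u_\alpha(y_\alpha)^{-2}\ge C^{-2}s_\alpha$: the putative concentration scale at $y_\alpha$ is bounded \emph{below} by a constant times the distance to $x_\alpha$, so rescaling at $y_\alpha$ at scale $\nu_\alpha$ can never produce a second standard bubble separated from $x_\alpha$ (and if $s_\alpha\not\to 0$, there is no blow-up at $y_\alpha$ at all). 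More importantly, failure of \eqref{SharpEstCtrl} is compatible with the total absence of secondary concentration: writing $d=d_g(x_\alpha,\cdot)$, a neck profile $u_\alpha\approx\mu_\alpha^{\beta-\frac{1}{2}}d^{-\beta}$ with $\frac{1}{2}\le\beta<1$, matched to the bubble at scale $\mu_\alpha$, satisfies \eqref{hypCdts}, is consistent with \eqref{Limualpha} and with the monotonicity \eqref{eq3.5} (since $r\mapsto r^{\frac{1}{2}-\beta}$ is non-increasing), yet violates \eqref{SharpEstCtrl} by a factor $(d/\mu_\alpha)^{1-\beta}\to+\infty$. So ``no second bubble plus monotonicity'' cannot yield the lemma; the monotonicity only controls the boundary value at scale $r_\alpha$, not the interior decay exponent.

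What is missing is the quantitative use of the equation that pins the decay exponent at $1$. The paper does this by a Green's representation bootstrap: one first needs the intermediate, genuinely nontrivial decay estimate \eqref{WeakEst}, namely $u_\alpha\le C\bigl(\mu_\alpha^{1/10}d^{-3/5}+\eta_{R,\alpha}r_\alpha^{2/5}d^{-2/5}\bigr)$, which makes $d_g(y_\alpha,\cdot)^{-1}u_\alpha^5$ integrable with the right size; plugging this into the representation of $u_\alpha(y_\alpha)$ by the Green's function of $\Delta_g-k\Lambda$ yields \eqref{Eqt1Lem4}, that is $u_\alpha\le C\bigl(\mu_\alpha^{1/2}d^{-1}+\eta_{R,\alpha}\bigr)$; and only then do the monotonicity \eqref{eq3.5} and the Harnack inequality enter, to absorb $\eta_{R,\alpha}$ and prove \eqref{Eqt2Lem4}. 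You would need to supply this bootstrap (or an equivalent mechanism, such as the isolated-simple-blow-up analysis via Pohozaev identities) to close the argument.
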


\begin{proof}[Proof of Lemma \ref{SharpEst}] Given $R > 0$ we define
\begin{equation}\label{DefEta}
\eta_{R,\alpha} = \sup_{B_{x_\alpha}(Rr_\alpha)\backslash B_{x_\alpha}(\frac{1}{R}r_\alpha)}u_\alpha\hskip.1cm .
\end{equation}
We prove that there exist $C, C^\prime > 0$ such that
\begin{equation}\label{Eqt1Lem4}
u_\alpha(x) \le C\left(\mu_\alpha^{\frac{1}{2}}d_g(x_\alpha,x)^{-1} + \eta_{R,\alpha}\right)
\end{equation}
for all $x \in B_{x_\alpha}(\frac{R}{2}r_\alpha)\backslash\{x_\alpha\}$ and all $\alpha$, and such that
\begin{equation}\label{Eqt2Lem4}
\eta_{R,\alpha} \le C^\prime\mu_\alpha^{\frac{1}{2}}r_\alpha^{-1}
\end{equation}
for all $\alpha$. Let $R^\prime \ge 6$ be given. The Harnack inequality in Druet, Hebey and V\'etois \cite{DruHebVet} can be stated in the 
following way: there 
exists $C>1$ such that for any sequence $(s_\alpha)_\alpha$ of positive real numbers satisfying that $s_\alpha > 0$ 
and $R^\prime s_\alpha \le 6\rho_\alpha$ for all $\alpha$, there holds
\begin{equation}\label{HarnackIneq}
s_\alpha \left\Vert \nabla u_\alpha \right\Vert_{L^\infty\left(\Omega_\alpha\right)}\le C \sup_{\Omega_\alpha}u_\alpha 
\le C^2 \inf_{\Omega_\alpha} u_\alpha\hskip.1cm ,
\end{equation}
where $\Omega_\alpha = B_{x_\alpha}\left(R^\prime s_\alpha\right)\setminus B_{x_\alpha}\left(\frac{1}{R^\prime}s_\alpha\right)$.
Lemma \ref{SharpEst} follows from \eqref{Eqt1Lem4}, \eqref{Eqt2Lem4}, and \eqref{HarnackIneq} 
in order to get the gradient part in \eqref{SharpEstCtrl}. We start with the proof of \eqref{Eqt1Lem4}.  For this aim 
we let $(y_\alpha)_\alpha$ be an arbitrary sequence in $B_{x_\alpha}(\frac{R}{2}r_\alpha)\backslash\{x_\alpha\}$, and prove that 
there exists $C > 0$ such that, up to a subsequence,
\begin{equation}\label{Eqt4Lem4}
u_\alpha(y_\alpha) \le C\left(\mu_\alpha^{\frac{1}{2}}d_g(x_\alpha,y_\alpha)^{-1}+\eta_{R,\alpha}\right)\hskip.1cm .
\end{equation}
As a preliminary remark one can note that \eqref{Eqt4Lem4} directly follows from \eqref{Limualpha} if $d_g(x_\alpha,y_\alpha) = O(\mu_\alpha)$. 
By \eqref{HarnackIneq} we may then assume that
\begin{equation}\label{Eqt5Lem4}
\lim_{\alpha\to+\infty}\frac{1}{\mu_\alpha}d_g(x_\alpha,y_\alpha) = +\infty
\hskip.2cm\hbox{and}\hskip.2cm
\lim_{\alpha \to +\infty}\frac{1}{r_\alpha}d_g(x_\alpha,y_\alpha) = 0\hskip.1cm .
\end{equation}
Without loss of generality, since the $\Vert h_\alpha\Vert_{L^\infty}$'s are bounded, we can assume that, up to a subsequence, 
$\Vert h_\alpha\Vert_{L^\infty} \to \Lambda$ as $\alpha \to +\infty$ for some $\Lambda \ge 0$, where the $h_\alpha$'s are as in \eqref{Defhalpha}.
Now we let $k > 1$ be such that $k\Lambda \not\in \hbox{Sp}(\Delta_g)$, where $\hbox{Sp}(\Delta_g)$ is the spectrum of $\Delta_g$, and let $G$ be the Green's 
function of $\Delta_g - k\Lambda$. Then, see, for instance, Robert \cite{RobGreen}, there are positive constants $C_1 > 1$ and $C_2, C_3 > 0$ such that
\begin{equation}\label{Eqt6Lem4}
\begin{split}
&\frac{1}{C_1}d_g(x,y)^{-1} - C_2 \le G(x,y) \le C_1 d_g(x,y)^{-1}\hskip.1cm ,\hskip.1cm\hbox{and}\\
&\vert\nabla G(x,y)\vert \le C_3d_g(x,y)^{-2}
\end{split}
\end{equation}
for all $x \not= y$. By \eqref{Eqt6Lem4} there exists $\delta > 0$ such that $G \ge 0$ in $B_{x_\alpha}(\delta r_\alpha)$ for all 
$\alpha$. By \eqref{Eqt5Lem4}, $y_\alpha \in B_{x_\alpha}(\frac{\delta}{2}r_\alpha)$ for $\alpha \gg 1$, and by the Green's representation formula,
\begin{equation}\label{Eqt7Lem4}
\begin{split}
u_\alpha(y_\alpha)
&= \int_{B_{x_\alpha}(\delta r_\alpha)}G(y_\alpha,x)\left(\Delta_gu_\alpha - k\Lambda u_\alpha\right)(x)dv_g(x)\\
&+ \int_{\partial B_{x_\alpha}(\delta r_\alpha)}G(y_\alpha,x)\left(\partial_\nu u_\alpha\right)(x)d\sigma_g(x)\\
&- \int_{\partial B_{x_\alpha}(\delta r_\alpha)}\left(\partial_\nu G(y_\alpha,x)\right)u_\alpha(x)d\sigma_g(x)\hskip.1cm ,
\end{split}
\end{equation}
where $\nu$ is the unit outward normal to $\partial B_{x_\alpha}(\delta r_\alpha)$. Since $k > 1$, and 
$\Vert h_\alpha\Vert_{L^\infty}\to\Lambda$ as $\alpha \to +\infty$,
$$\Delta_gu_\alpha - k\Lambda u_\alpha \le u_\alpha^5$$
and since $G \ge 0$ in $B_{x_\alpha}(\delta r_\alpha)$ we get with \eqref{Eqt6Lem4} that
\begin{equation}\label{Eqt8Lem4}
\begin{split}
&\int_{B_{x_\alpha}(\delta r_\alpha)}G(y_\alpha,x)\left(\Delta_gu_\alpha - k\Lambda u_\alpha\right)(x)dv_g(x)\\
&\le C \int_{B_{x_\alpha}(\delta r_\alpha)}d_g(y_\alpha,x)^{-1}u_\alpha(x)^5dv_g(x)\hskip.1cm .
\end{split}
\end{equation}
Also, by \eqref{HarnackIneq} and \eqref{Eqt6Lem4}, we have that
\begin{equation}\label{Eqt9Lem4}
\begin{split}
&\int_{\partial B_{x_\alpha}(\delta r_\alpha)}G(y_\alpha,x)\left\vert\partial_\nu u_\alpha(x)\right\vert d\sigma_g(x)
\le C\eta_{R,\alpha}\hskip.1cm ,\hskip.1cm\hbox{and}\\
&\int_{\partial B_{x_\alpha}(\delta r_\alpha)}\left\vert\partial_\nu G(y_\alpha,x)\right\vert u_\alpha(x)d\sigma_g(x)
\le C\eta_{R,\alpha}
\end{split}
\end{equation}
for some $C > 0$. Combining \eqref{Eqt7Lem4}--\eqref{Eqt9Lem4}, we get that
\begin{equation}\label{Eqt10Lem4}
u_\alpha(y_\alpha) 
\le  C\int_{B_{x_\alpha}(\delta r_\alpha)}d_g(y_\alpha,x)^{-1}u_\alpha^5(x)dv_g(x) + C\eta_{R,\alpha}\hskip.1cm .
\end{equation}
Following Druet, Hebey and V\'etois \cite{DruHebVet}, there holds that
\begin{equation}\label{WeakEst}
u_\alpha(x) \le C\left(\mu_\alpha^{1/10}d_g(x_\alpha,x)^{-3/5}+\eta_{R,\alpha}r_\alpha^{2/5}d_g(x_\alpha,x)^{-2/5}\right)
\end{equation}
for all $x \in B_{x_\alpha}(Rr_\alpha)\backslash\{x_\alpha\}$ and all $\alpha$, where $C > 0$ does not depend on 
$x$ and $\alpha$. In particular, we get with \eqref{hypCdts}, \eqref{Limualpha}, \eqref{Eqt5Lem4}, and \eqref{WeakEst}, that
\begin{equation}\label{EqtDHV1}
\int_{B_{x_\alpha}(\delta r_\alpha)}d_g(y_\alpha,x)^{-1}u_\alpha^5(x)dv_g(x)
= O\left(\mu_\alpha^{\frac{1}{2}}d_g(x_\alpha,y_\alpha)^{-1}\right) + O\left(\eta_{R,\alpha}\right)\hskip.1cm .
\end{equation}
By \eqref{Eqt10Lem4} and \eqref{EqtDHV1}, we obtain \eqref{Eqt4Lem4}. In particular, \eqref{Eqt1Lem4} holds true. Now it remains to prove 
\eqref{Eqt2Lem4}. By \eqref{eq3.5}, for any $\eta \in (0,1)$,
$$\left(\eta r_\alpha\right)^{\frac{1}{2}}\varphi_\alpha(\eta r_\alpha) \ge r_\alpha^{\frac{1}{2}}\varphi_\alpha(r_\alpha)$$
for all $\alpha \gg 1$, where $\varphi_\alpha$ is as in \eqref{eq3.1}. 
By \eqref{HarnackIneq}, there exists $C > 1$ such that
\begin{equation}\label{Eqt1Lem3}
\frac{1}{C}\sup_{B_{x_\alpha}(Rs_\alpha)\backslash B_{x_\alpha}(\frac{1}{R}s_\alpha)}u_\alpha
\le \varphi_\alpha(s_\alpha) \le C\inf_{B_{x_\alpha}(Rs_\alpha)\backslash B_{x_\alpha}(\frac{1}{R}s_\alpha)}u_\alpha
\end{equation}
for all $0 < s_\alpha \le r_\alpha$ and all $\alpha$. 
By \eqref{Eqt1Lem3} we then get that
$$\frac{1}{C}r_\alpha^{\frac{1}{2}}\eta_{R,\alpha} 
\le (\eta r_\alpha)^{\frac{1}{2}}\sup_{\partial B_{x_\alpha}(\eta r_\alpha)}u_\alpha\hskip.1cm .$$
Assuming \eqref{Eqt1Lem4} it follows that
$$\frac{1}{C}\eta_{R,\alpha} \le \eta^{\frac{1}{2}}\left(\mu_\alpha^{\frac{1}{2}}(\eta r_\alpha)^{-1} + \eta_{R,\alpha}\right)$$
and if we choose $\eta \in (0,1)$ sufficiently small such that $C\eta^{\frac{1}{2}} \le \frac{1}{2}$, we obtain that
$$\eta_{R,\alpha} \le \eta^{2-n}\mu_\alpha^{\frac{1}{2}}r_\alpha^{-1}\hskip.1cm .$$
In particular, \eqref{Eqt2Lem4} holds true. This ends the proof of the lemma.
\end{proof}

Now that we have Lemma \ref{SharpEst} we prove that the following fundamental asymptotic estimate holds true.

\begin{lem}\label{LemSharpAsypts} Let $(M,g)$ be a smooth compact Riemannian $3$-dimensional manifold and 
$\bigl((u_\alpha,v_\alpha)\bigr)_\alpha$ be a sequence 
of smooth positive solutions of \eqref{SWSystAlphaCrit} such that \eqref{ContrAssumptSubCptness} holds true. 
Let $(x_\alpha)_\alpha$ and $(\rho_\alpha)_\alpha$ be such that \eqref{hypCdts} holds true. Assume $r_\alpha \to 0$ 
as $\alpha \to +\infty$, where $r_\alpha$ is as in \eqref{eq3.3}. Then $\rho_\alpha = O\left(r_\alpha\right)$ and
\begin{equation}\label{EqtLemSharpAspt}
r_\alpha\mu_\alpha^{-\frac{1}{2}}u_\alpha\left(\exp_{x_\alpha}(r_\alpha x)\right) \to 
\frac{\sqrt{3}}{\vert x\vert} + \mathcal{H}(x)
\end{equation}
in $C^2_{loc}\left(B_0(2)\backslash\{0\}\right)$ as $\alpha \to +\infty$, where $\mu_\alpha$ is as in \eqref{DefMualpha}, 
and $\mathcal{H}$ is a harmonic function in $B_0(2)$ which satisfies that $\mathcal{H}(0) = 0$. 
\end{lem}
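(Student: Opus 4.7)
My plan is a blow-up analysis of \eqref{SWSystAlphaCrit} at the intermediate scale $r_\alpha$, in the style of Druet--Hebey--V\'etois \cite{DruHebVet}. Define
\[ \tilde u_\alpha(x) = r_\alpha\mu_\alpha^{-1/2}\,u_\alpha(\exp_{x_\alpha}(r_\alpha x)), \qquad \tilde g_\alpha(x) = (\exp_{x_\alpha}^\star g)(r_\alpha x), \]
and rescale the first equation in \eqref{SWSystAlphaCrit} to
\[ \Delta_{\tilde g_\alpha}\tilde u_\alpha + r_\alpha^2\,\tilde h_\alpha\,\tilde u_\alpha = \left(\mu_\alpha/r_\alpha\right)^2 \tilde u_\alpha^5, \]
with $\tilde h_\alpha$ the pullback of $h_\alpha$ from \eqref{Defhalpha}. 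Since $r_\alpha\to 0$, the metrics $\tilde g_\alpha$ converge to $\xi$ locally and $r_\alpha^2\tilde h_\alpha\to 0$ uniformly (using $\Vert h_\alpha\Vert_{L^\infty}=O(1)$); by \eqref{eq3.4}, $\mu_\alpha/r_\alpha\to 0$. Applying Lemma \ref{SharpEst} with $R\ge 4$ (which is allowed provided $\rho_\alpha\ge c r_\alpha$ for some $c>0$, a point returned to below) gives the uniform pointwise bound $\tilde u_\alpha(x)\le C/|x|$ on $B_0(2)\setminus\{0\}$, so the nonlinear right-hand side vanishes in $L^\infty_{loc}(B_0(2)\setminus\{0\})$.

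Standard elliptic estimates on annuli $B_0(2)\setminus B_0(\varepsilon)$, combined with the gradient bound supplied by \eqref{SharpEstCtrl}, then produce a subsequence along which $\tilde u_\alpha\to\tilde u$ in $C^2_{loc}(B_0(2)\setminus\{0\})$, with $\tilde u\ge 0$, $\Delta_\xi\tilde u=0$ on $B_0(2)\setminus\{0\}$, and $\tilde u(x)\le C/|x|$. The B\^ocher-type removable singularity theorem for nonnegative harmonic functions then writes $\tilde u(x)=c/|x|+\mathcal H(x)$ with $c\ge 0$ and $\mathcal H$ harmonic on $B_0(2)$. The coefficient is pinned to $c=\sqrt 3$ by matching with the bubble at an intermediate scale $s_\alpha=(\mu_\alpha r_\alpha)^{1/2}$: formula \eqref{Limualpha} gives
\[ u_\alpha(\exp_{x_\alpha}(s_\alpha\hat x)) = \mu_\alpha^{-1/2}\left(1+\tfrac{s_\alpha^2}{3\mu_\alpha^2}+o(1)\right)^{-1/2} \sim \sqrt{3}\,\mu_\alpha^{1/2}/s_\alpha, \]
so rescaling at $x=(s_\alpha/r_\alpha)\hat x$ produces $\tilde u_\alpha(x)|x|\to\sqrt 3$, and since $\mathcal H$ is bounded near $0$, this forces $c=\sqrt 3$.

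The main obstacle is the joint conclusion $\mathcal H(0)=0$ and $\rho_\alpha=O(r_\alpha)$. Using the mean-value property for $\mathcal H$, the limiting spherical average of $\tilde u$ equals $\tilde\varphi(r)=\sqrt 3/r+\mathcal H(0)$, and the limiting form of the monotonicity \eqref{eq3.5}, namely that $r^{1/2}\tilde\varphi(r)$ is non-increasing on $(0,1]$, gives $\mathcal H(0)\le\sqrt 3$. If $r_\alpha<\rho_\alpha$ for infinitely many $\alpha$, then \eqref{eq3.6} passes to $(r^{1/2}\tilde\varphi)'(1)=0$, which pins $\mathcal H(0)=\sqrt 3$. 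My plan to secure $\mathcal H(0)=0$ is to apply a Pohozaev-type identity on $B_{x_\alpha}(r_\alpha)$ to the first equation of \eqref{SWSystAlphaCrit}: after rescaling, the bulk terms (involving $h_\alpha u_\alpha^2$ and the coupling with $v_\alpha$, the latter controlled thanks to $0\le v_\alpha\le 1/q$) are $O(r_\alpha^2)$ and vanish, the concentrated bubble contribution converges to an explicit constant matching the known bubble energy, and the boundary integral over $|x|=1$ becomes an explicit linear functional of $c$ and $\mathcal H(0)$; identifying the terms in the limit forces $\mathcal H(0)=0$. This contradicts the case $r_\alpha<\rho_\alpha$, so $r_\alpha=\rho_\alpha$ for $\alpha$ large, and in particular $\rho_\alpha=O(r_\alpha)$.
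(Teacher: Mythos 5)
Your overall architecture is the one the paper uses: rescale at the scale $r_\alpha$, invoke Lemma \ref{SharpEst} for the bound $\tilde u_\alpha(x)\le C\vert x\vert^{-1}$, pass to a harmonic limit with a B\^ocher decomposition $c/\vert x\vert+\mathcal{H}$, kill $\mathcal{H}(0)$ with a Pohozaev identity on $B_{x_\alpha}(r_\alpha)$ whose interior terms are $O(\mu_\alpha r_\alpha)=o(\mu_\alpha/r_\alpha)$, and then use \eqref{eq3.6} to rule out $r_\alpha/\rho_\alpha\to 0$ (since $(r^{1/2}\varphi(r))'(1)=0$ with $\varphi(r)=\Lambda r^{-1}+\mathcal{H}(0)$ and $\mathcal{H}(0)=0$ would force $\Lambda=0$). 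That part of your plan is sound, modulo the usual care in choosing the vector field (the paper takes $X_\alpha$ with a Ricci correction as in \eqref{DefX} so that the trace-free Hessian term and $\Delta_g\hbox{div}_gX_\alpha$ are controlled); also, in the boundary balance it is the product $\Lambda\,\mathcal{H}(0)$ that survives, not separate linear contributions, and there is no ``bubble energy constant'' left over on the interior side --- every interior term is $o(\mu_\alpha/r_\alpha)$.

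The genuine gap is your determination of $c=\sqrt 3$ by matching at the intermediate scale $s_\alpha=(\mu_\alpha r_\alpha)^{1/2}$. Neither convergence you have reaches that scale: \eqref{Limualpha} is a $C^1_{loc}(\mathbb{R}^3)$ statement, so it controls $u_\alpha$ only at points $\exp_{x_\alpha}(\mu_\alpha y)$ with $\vert y\vert$ bounded, whereas $s_\alpha/\mu_\alpha=(r_\alpha/\mu_\alpha)^{1/2}\to+\infty$; and on the other side $\tilde u_\alpha\to\tilde u$ only in $C^2_{loc}(B_0(2)\backslash\{0\})$, whereas the matching point $x=(s_\alpha/r_\alpha)\hat x$ satisfies $\vert x\vert=(\mu_\alpha/r_\alpha)^{1/2}\to 0$. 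Lemma \ref{SharpEst} supplies only the one-sided bound $u_\alpha\le C\mu_\alpha^{1/2}d_g(x_\alpha,\cdot)^{-1}$, not the two-sided asymptotic $u_\alpha=(1+o(1))\sqrt{3}\,\mu_\alpha^{1/2}d_g(x_\alpha,\cdot)^{-1}$ you implicitly use, so as written the step ``$\tilde u_\alpha(x)\vert x\vert\to\sqrt 3$ at the intermediate scale'' does not follow. The fix is the paper's flux computation: integrate the rescaled equation over $B_0(1)$; the boundary flux converges to $-\omega_2\Lambda$ by the established convergence away from the origin, the linear term is $O(r_\alpha^2)$, and the nonlinear term $(\mu_\alpha/r_\alpha)^2\int_{B_0(1)}\tilde u_\alpha^5\,dv_{g_\alpha}$ is evaluated by changing variables back to the scale $\mu_\alpha$, where \eqref{Limualpha} together with the integrable majorant $C\vert y\vert^{-5}$ from Lemma \ref{SharpEst} gives convergence to $\int_{\mathbb{R}^3}(1+\vert y\vert^2/3)^{-5/2}dy=\sqrt 3\,\omega_2$, whence $\Lambda=\sqrt 3$. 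A minor additional remark: your proviso that $R\ge 4$ in Lemma \ref{SharpEst} requires $\rho_\alpha\ge c\,r_\alpha$ is unnecessary, since the hypothesis there is $Rr_\alpha\le 6\rho_\alpha$ and $r_\alpha\le\rho_\alpha$ holds by the definition \eqref{eq3.3}, so $R=6$ is always admissible and already covers $B_0(3)$.
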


\begin{proof}[Proof of Lemma \ref{LemSharpAsypts}] Let $R \ge 6$ be such that $Rr_\alpha \le 6\rho_\alpha$ for $\alpha \gg 1$. 
In what follows we assume that $r_\alpha\to 0$ as $\alpha\to +\infty$. For $x\in B_0(3)$ we set
\begin{eqnarray*}
\tilde{u}_\alpha(x) &=& r_\alpha \mu_\alpha^{-\frac{1}{2}}u_\alpha\left(\exp_{x_\alpha}\left(r_\alpha x\right)\right)\hskip.1cm,\\
g_\alpha(x)&=& \left(\exp_{x_\alpha}^\star g\right)\left(r_\alpha x\right)\hskip.1cm ,\hskip.1cm\hbox{and}\\
\tilde{h}_\alpha(x)&=& h_\alpha\left(\exp_{x_\alpha}(r_\alpha x)\right)\hskip.1cm,
\end{eqnarray*}
where $h_\alpha$ is as in \eqref{Defhalpha}. Since $r_\alpha\to 0$ as $\alpha\to +\infty$, we have that $\tilde{g}_\alpha\to \xi$ in 
$C^2_{loc}(\mathbb{R}^3)$ as $\alpha\to +\infty$, where $\xi$ is the Euclidean metric. 
Thanks to Lemma \ref{SharpEst} we also have that 
\begin{equation}\label{eq5.1}
\left\vert\tilde{u}_\alpha(x)\right\vert \le C \left\vert x\right\vert^{-1}
\end{equation}
in $B_0(\frac{R}{2})\backslash\{0\}$. By \eqref{SWSystAlphaCrit}, \eqref{eq3.4}, and thanks to standard elliptic theory we 
can write that, after passing to a subsequence, 
$\tilde{u}_\alpha \to \tilde{u}$ in $C^2_{loc}\left(B_0(\frac{R}{2})\backslash\{0\}\right)$ as $\alpha \to +\infty$, where $\tilde{u}$ satisfies 
$\Delta\tilde{u} = 0$ in $B_0(\frac{R}{2})\backslash\{0\}$. By \eqref{eq5.1}, $\left\vert\tilde{u}(x)\right\vert \le C \left\vert x\right\vert^{-1}$ 
in $B_0(\frac{R}{2})\backslash\{0\}$. Thus we can write that 
\begin{equation}\label{eq5.6}
\tilde{u}(x) = \frac{\Lambda}{\left\vert x\right\vert}+ \mathcal{H}(x)
\end{equation}
where $\Lambda \ge 0$ and $\mathcal{H}$ satisfies 
$\Delta\mathcal{H}=0$ in $B_0(\frac{R}{2})$. 
In order to see that $\Lambda = \sqrt{3}$, it is sufficient to integrate the equation satisfied by the $\tilde{u}_\alpha$'s 
in $B_0(1)$. Then
\begin{equation}\label{Eqt1LS}
-\int_{\partial B_0(1)} \partial_\nu\tilde{u}_\alpha d\sigma_{g_\alpha}
= \left(\frac{\mu_\alpha}{r_\alpha}\right)^2\int_{B_0(1)}\tilde{u}_\alpha^5dv_{g_\alpha}
 -r_\alpha^2 \int_{B_0(1)} \tilde{h}_\alpha\tilde{u}_\alpha dv_{g_\alpha}\hskip.1cm ,
 \end{equation}
where $\nu$ is the unit outward normal derivative to $\partial B_0(1)$. 
By \eqref{eq5.1}, the $\tilde{u}_\alpha$'s are bounded in $L^1\left(B_0(1)\right)$. 
Changing $x$ into $\frac{\mu_\alpha}{r_\alpha}x$, thanks to \eqref{Limualpha} and Lemma \ref{SharpEst}, we also have that
\begin{equation}\label{Eqt3LS}
\lim_{\alpha\to+\infty}\left(\frac{\mu_\alpha}{r_\alpha}\right)^2\int_{B_0(1)}\tilde{u}_\alpha^5 dv_{g_\alpha}
= \int_{\mathbb{R}^3}\left(\frac{1}{1+\frac{\vert x\vert^2}{3}}\right)^{5/2}dx
= \sqrt{3}\omega_2\hskip.1cm .
\end{equation}
Noting that by \eqref{eq5.6},
\begin{equation}\label{Eqt4LS}
\lim_{\alpha\to+\infty}\int_{\partial B_0(1)}\partial_\nu\tilde{u}_\alpha d\sigma_{g_\alpha} = - \omega_2\Lambda\hskip.1cm ,
\end{equation}
we get that $\Lambda = \sqrt{3}$ by combining \eqref{Eqt1LS}--\eqref{Eqt4LS}. Now we prove that $\mathcal{H}(0) = 0$. 
In what follows we let $X_\alpha$ be the $1$-form given by
\begin{equation}\label{DefX}
X_\alpha(x) = \left(1 - \frac{1}{12}Rc _g^\sharp(x)\left(\nabla f_\alpha(x),\nabla f_\alpha(x)\right)\right)\nabla f_\alpha(x)
\hskip.1cm ,
\end{equation}
where $f_\alpha(x) = \frac{1}{2}d_g(x_\alpha,x)^2$ and, in local coordinates, $(Rc_g^\sharp)^{ij} = g^{i\mu}g^{j\nu}R_{\mu\nu}$, 
where the $R_{ij}$'s are the components of the Ricci curvature $Rc_g$ of $g$. We adopt the notations that $A^\sharp$ is the musical isomorphism 
applied to $A$, and that $X(\nabla u) = (X,\nabla u)$ for $X$ a $1$-form and $u$ a function. By the Pohozaev 
identity in Druet and Hebey \cite{DruHebStrongCoup1}, 
that we apply to the $u_\alpha$'s in $B_{x_\alpha}(r_\alpha)$ with the above choice of $X_\alpha$, we have that
\begin{equation}\label{eq5.7}
\begin{split}
& \int_{B_{x_\alpha}(r_\alpha)} X_\alpha(\nabla u_\alpha) h_\alpha u_\alpha dv_g 
+ \frac{1}{12} \int_{B_{x_\alpha}(r_\alpha)} \left(\Delta_g\hbox{div}_g X_\alpha\right)u_\alpha^2dv_g \\
&+ \frac{1}{6}  \int_{B_{x_\alpha}(r_\alpha)}\left(\hbox{div}_g X_\alpha\right) h_\alpha u_\alpha^2dv_g 
= Q_{1,\alpha} + Q_{2,\alpha} + Q_{3,\alpha}\hskip.1cm ,
\end{split}
\end{equation}
where 
\begin{eqnarray*}
Q_{1,\alpha}&=& \frac{1}{6} \int_{\partial B_{x_\alpha}(r_\alpha)} \left(\hbox{div}_g X_\alpha\right) (\partial_\nu u_\alpha) u_\alpha d\sigma_g \\
&&- \int_{\partial B_{x_\alpha}(r_\alpha)} \left(\frac{1}{2} X_\alpha(\nu)\vert \nabla u_\alpha\vert^2 - X_\alpha(\nabla u_\alpha)\partial_\nu u_\alpha\right) d\sigma_g\hskip.1cm,
\end{eqnarray*}
$$Q_{2,\alpha} = -\int_{B_{x_\alpha}(r_\alpha)} \left(\nabla X_\alpha-\frac{1}{3}\left(\hbox{div}_g X_\alpha\right) g\right)^\sharp
\left(\nabla u_\alpha,\nabla u_\alpha\right) dv_g\hskip.1cm,$$
$$Q_{3,\alpha}= \frac{1}{6}\int_{\partial B_{x_\alpha}(r_\alpha)} X_\alpha\left(\nu\right) u_\alpha^6 d\sigma_g 
 - \frac{1}{12} \int_{\partial B_{x_\alpha}(r_\alpha)} \left(\partial_\nu \left(\hbox{div}_g X_\alpha\right)\right) u_\alpha^2 d\sigma_g\hskip.1cm,$$
and $\nu$ is the unit outward normal derivative to $\partial B_{x_\alpha}(r_\alpha)$. We have that 
\begin{equation}\label{Eqt1TL1}
\left(\nabla X_\alpha\right)_{ij} - \frac{1}{n}\left(\hbox{div}_gX_\alpha\right)g_{ij} = O\left(d_g(x_\alpha,x)^2\right)
\end{equation}
for all $i, j$. By Lemma \ref{SharpEst} and \eqref{Eqt1TL1} we then get that
\begin{equation}\label{IntCalc1}
\begin{split}
\left\vert Q_{2,\alpha}\right\vert
&\le C \int_{B_{x_\alpha}(r_\alpha)}d_g(x_\alpha,x)^2\vert\nabla u_\alpha(x)\vert^2dv_g(x)\\
&\le C\mu_\alpha\int_{B_{x_\alpha}(r_\alpha)}d_g(x_\alpha,x)^{-2}dv_g(x)\\
&\le C\mu_\alpha r_\alpha\hskip.1cm .
\end{split}
\end{equation}
Similarly, 
\begin{equation}\label{Eqt1LT2}
\begin{split}
&\vert X_\alpha(x)\vert = O\left(d_g(x_\alpha,x)\right)\hskip.1cm ,\\
&\hbox{div}_gX_\alpha(x) = 3 + O\left(d_g(x_\alpha,x)^2\right)\hskip.1cm ,\hskip.1cm\hbox{and}\\
&\Delta_g\left(\hbox{div}_gX_\alpha\right)(x) = \frac{3}{2}S_g(x_\alpha) + O\left(d_g(x_\alpha,x)\right)\hskip.1cm .
\end{split}
\end{equation}
By \eqref{Eqt1LT2} and Lemma \ref{SharpEst}, we then get that
\begin{equation}\label{eq5.8}
\begin{split}
& \int_{B_{x_\alpha}(r_\alpha)} X_\alpha(\nabla u_\alpha) h_\alpha u_\alpha dv_g 
+ \frac{1}{12} \int_{B_{x_\alpha}(r_\alpha)} \left(\Delta_g\hbox{div}_g X_\alpha\right)u_\alpha^2dv_g \\
&+ \frac{1}{6}  \int_{B_{x_\alpha}(r_\alpha)}\left(\hbox{div}_g X_\alpha\right) h_\alpha u_\alpha^2dv_g 
= O\left(\mu_\alpha r_\alpha\right)\hskip.1cm ,
\end{split}
\end{equation}
 and since there also holds that $\vert\nabla\left(\hbox{div}_gX_\alpha\right)(x)\vert = O\left(d_g(x_\alpha,x)\right)$, we can write 
 in addition that
\begin{equation}\label{eq5.8bis}
\left\vert Q_{3,\alpha}\right\vert = O\left(\mu_\alpha^3r_\alpha^{-3}\right) + O\left(\mu_\alpha r_\alpha\right)\hskip.1cm .
\end{equation}
Combining \eqref{eq5.7}, \eqref{IntCalc1}, \eqref{eq5.8}, and \eqref{eq5.8bis} we get that 
\begin{equation}\label{FrstComp1}
\begin{split}
&\frac{1}{6} \int_{\partial B_{x_\alpha}(r_\alpha)} \left(\hbox{div}_g X_\alpha\right) (\partial_\nu u_\alpha) u_\alpha d\sigma_g \\
&- \int_{\partial B_{x_\alpha}(r_\alpha)} \left(\frac{1}{2} X_\alpha(\nu)\vert \nabla u_\alpha\vert^2 - X_\alpha(\nabla u_\alpha)\partial_\nu u_\alpha\right) d\sigma_g\\
&= O\left(\mu_\alpha^3r_\alpha^{-3}\right) + O\left(\mu_\alpha r_\alpha\right)\hskip.1cm.
\end{split}
\end{equation}
Since $\tilde{u}_\alpha \to \tilde{u}$ in $C^2_{loc}\left(B_0(\frac{R}{2})\backslash\{0\}\right)$ as $\alpha \to +\infty$, where $\tilde{u}$ satisfies 
$\Delta\tilde{u} = 0$ and $\left\vert\tilde{u}(x)\right\vert \le C \left\vert x\right\vert^{-1}$ 
in $B_0(\frac{R}{2})\backslash\{0\}$, and by \eqref{eq5.6}, we independently get that 
\begin{equation}\label{FrstComp2}
\begin{split}
&\frac{1}{6} \int_{\partial B_{x_\alpha}(r_\alpha)} \left(\hbox{div}_g X_\alpha\right) (\partial_\nu u_\alpha) u_\alpha d\sigma_g \\
&- \int_{\partial B_{x_\alpha}(r_\alpha)} \left(\frac{1}{2} X_\alpha(\nu)\vert \nabla u_\alpha\vert^2 - X_\alpha(\nabla u_\alpha)\partial_\nu u_\alpha\right) d\sigma_g\\
&= \left(\frac{3}{2}\omega_2\Lambda\mathcal{H}(0) + o(1)\right)\frac{\mu_\alpha}{r_\alpha}\hskip.1cm .
\end{split}
\end{equation}
Combining \eqref{FrstComp1} and \eqref{FrstComp2}, we get with \eqref{eq3.4} that $\mathcal{H}(0) = 0$. At this point it remains to prove that 
$\rho_\alpha = O\left(r_\alpha\right)$. We proceed by contradiction and assume that $r_\alpha\rho_\alpha^{-1} \to 0$ as 
$\alpha \to +\infty$. Then \eqref{eq5.6} holds in $B_0(R)\backslash\{0\}$ for all $R$, and 
$r_\alpha < \rho_\alpha$ for $\alpha \gg 1$. In particular, we get with \eqref{eq3.6} and \eqref{eq5.6} that 
$(r^{1/2}\varphi(r))^\prime(1) = 0\hskip.1cm ,$
where
\begin{eqnarray*}
\varphi(r) 
&=& \frac{1}{\omega_2r^2}\int_{\partial B_0(r)}\tilde u d\sigma\\
&=& \frac{\Lambda}{r^{n-2}} + \mathcal{H}(0)\hskip.1cm .
\end{eqnarray*}
Since $\mathcal{H}(0) = 0$, it follows that $\Lambda = 0$, and this is impossible since $\Lambda = \sqrt{3}$. 
Lemma \ref{LemSharpAsypts} is proved.
\end{proof}

\section{Proof of the uniform bound in Theorem \ref{MainThm} when $p = 6$}\label{Sect3}

We prove that the uniform bound in the theorem holds true when $p = 6$. For this aim we use the analysis developed in 
Section \ref{blowupest} to prove that blow-up points are isolated. Then we use 
phase compensation, and the positive mass theorem of Schoen, and Yau \cite{SchYau1} 
(see also Schoen and Yau \cite{SchYau2,SchYau3} Witten \cite{Wit}) to prove that there are no blow-up points when we assume \eqref{MainAssumpt} 
with the property that \eqref{MainAssumpt} is strict at least at one point if $(M,g)$ is conformally diffeomorphic to the unit $3$-sphere 
and $\omega\lambda = 0$.

\medskip Here again we let $(M,g)$ be a smooth compact $3$-dimensional Riemannian manifold, 
$a > 0$ be a smooth positive function in $M$, and $(\omega_\alpha)_\alpha$ be a sequence in $(-\omega_a,\omega_a)$ such that 
$\omega_\alpha \to \omega$ as $\alpha \to +\infty$ for some $\omega \in [-\omega_a,\omega_a]$, where $\omega_a$ is as in \eqref{DefOm0a}. Also we let 
$\bigl((u_\alpha,v_\alpha)\bigr)_\alpha$ be a sequence 
of smooth positive solutions of \eqref{SWSyst} with phases $\omega_\alpha$ and $p = 6$. In particular, the $u_\alpha$'s and $v_\alpha$'s satisfy 
\eqref{SWSystAlphaCrit}. 
We assume that \eqref{ContrAssumptSubCptness} holds true. Following Druet and Hebey \cite{DruHebEinstLich}, see 
also Druet, Hebey and V\'etois \cite{DruHebVet}, 
there exists $C > 0$ such that for any $\alpha$ the following holds true. Namely that 
there exist $N_\alpha\in \mathbb{N}^\star$ and $N_\alpha$ critical points of $u_\alpha$, 
denoted by $\left(x_{1,\alpha}, x_{2,\alpha}, \dots, x_{N_\alpha,\alpha}\right)$, such that 
\begin{equation}\label{Eqt1Pr}
d_g\left(x_{i,\alpha},x_{j,\alpha}\right)^{\frac{1}{2}}u_\alpha(x_{i,\alpha}) \ge 1
\end{equation}
for all $i, j \in \left\{1,\dots,N_\alpha\right\}$, $i\neq j$, and 
\begin{equation}\label{Eqt2Pr}
\left(\min_{i=1,\dots,N_\alpha} d_g\left(x_{i,\alpha}, x\right)\right)^{\frac{1}{2}}u_\alpha(x) \le C
\end{equation}
for all $x \in M$ and all $\alpha$. We define $d_\alpha$ by
\begin{equation}\label{eqconcl1}
d_\alpha = \min_{1 \le i <   j \le N_\alpha} d_g\left(x_{i,\alpha},x_{j,\alpha}\right)\hskip.1cm.
\end{equation}
If $N_\alpha=1$, we set $d_\alpha= \frac{1}{4}i_g$, where $i_g$ is the injectivity radius of $(M,g)$. The first important lemma we prove in 
this section is that blow-up points are necessarily isolated in the sense that $d_\alpha \not\to 0$ as $\alpha \to +\infty$.

\begin{lem}\label{BLowUpPtsIsol} Let $(M,g)$ be a smooth compact Riemannian $3$-dimensional manifold and 
$\bigl((u_\alpha,v_\alpha)\bigr)_\alpha$ be a sequence 
of smooth positive solutions of \eqref{SWSystAlphaCrit} such that \eqref{ContrAssumptSubCptness} holds true. Then $d_\alpha \not\to 0$ as $\alpha \to +\infty$, where 
$d_\alpha$ is as in \eqref{eqconcl1}.
\end{lem}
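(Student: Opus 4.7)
The plan is to argue by contradiction. Suppose $d_\alpha\to 0$; after relabeling and extracting a subsequence, assume the closest pair is realized by $x_{1,\alpha},x_{2,\alpha}$, so $d_g(x_{1,\alpha},x_{2,\alpha})=d_\alpha$, and $\mu_{1,\alpha}\le\mu_{2,\alpha}$, where $\mu_{i,\alpha}=u_\alpha(x_{i,\alpha})^{-2}$. By \eqref{Eqt1Pr}, $\mu_{1,\alpha},\mu_{2,\alpha}\le d_\alpha$, so both tend to $0$. I would split according to whether $d_\alpha/\mu_{1,\alpha}$ stays bounded or not.

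\emph{Case A: $d_\alpha/\mu_{1,\alpha}$ bounded.} Rescale by $\mu_{1,\alpha}$ around $x_{1,\alpha}$ and invoke \eqref{Limualpha} to get $C^1_{loc}(\mathbb{R}^3)$ convergence to the Aubin--Talenti bubble $U(y)=(1+|y|^2/3)^{-1/2}$. The rescaled position $y_{2,\alpha}$ of $x_{2,\alpha}$ satisfies $|y_{2,\alpha}|=d_\alpha/\mu_{1,\alpha}\in[1,C]$, the lower bound coming from \eqref{Eqt1Pr}. Since each $x_{2,\alpha}$ is a critical point of $u_\alpha$, the subsequential limit $y^*$ is a critical point of $U$ by $C^1$ convergence, so $y^*=0$; this contradicts $|y^*|\ge 1$.

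\emph{Case B: $d_\alpha/\mu_{1,\alpha}\to+\infty$.} I would apply Lemma~\ref{LemSharpAsypts} at $x_{1,\alpha}$ with $\rho_\alpha=d_\alpha/14$. The conditions \eqref{hypCdts} hold: on $B_{x_{1,\alpha}}(7\rho_\alpha)=B_{x_{1,\alpha}}(d_\alpha/2)$ the triangle inequality ensures $d_g(x_{j,\alpha},x)\ge d_\alpha/2\ge d_g(x_{1,\alpha},x)$ for every $j\ne 1$, so $x_{1,\alpha}$ realizes the minimum in \eqref{Eqt2Pr}; condition (iii) follows from $\rho_\alpha^{1/2}u_\alpha(x_{1,\alpha})=(d_\alpha/(14\mu_{1,\alpha}))^{1/2}\to\infty$. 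The lemma yields $r_\alpha\asymp d_\alpha$ and, with $\tilde u_\alpha(x):=r_\alpha\mu_{1,\alpha}^{-1/2}u_\alpha(\exp_{x_{1,\alpha}}(r_\alpha x))$, the $C^2_{loc}(B_0(2)\setminus\{0\})$ convergence $\tilde u_\alpha\to\sqrt{3}/|x|+\mathcal{H}_1(x)$ with $\mathcal{H}_1$ harmonic in $B_0(2)$ and $\mathcal{H}_1(0)=0$. The contradiction comes from the second bubble: for any $y_\alpha\in\partial B_{x_{1,\alpha}}(r_\alpha)$ one has $d_g(x_{2,\alpha},y_\alpha)\in[d_\alpha-r_\alpha,d_\alpha+r_\alpha]\asymp d_\alpha$. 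A Harnack argument together with the bubble convergence \eqref{Limualpha} applied at $x_{2,\alpha}$ (or Lemma~\ref{LemSharpAsypts} applied at $x_{2,\alpha}$ when $d_\alpha/\mu_{2,\alpha}\to\infty$) produces a lower bound on $u_\alpha(y_\alpha)$ which, rescaled, gives
\[
\tilde u_\alpha(x)\Bigm|_{|x|=1}\;\ge\;\sqrt{3}+c'\bigl(\mu_{2,\alpha}/\mu_{1,\alpha}\bigr)^{1/2}(r_\alpha/d_\alpha).
\]
Using $\mu_{2,\alpha}\ge\mu_{1,\alpha}$ and $r_\alpha\asymp d_\alpha$, the correction is bounded below by some $c''>0$; passing to the limit and integrating over $\partial B_0(1)$, the harmonic mean value property gives $\mathcal{H}_1(0)\ge c''>0$, contradicting $\mathcal{H}_1(0)=0$. (If the correction is unbounded along a subsequence, the $C^2_{loc}$ convergence itself fails, an equally good contradiction.)

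The principal obstacle will be to make this lower bound on $u_\alpha(y_\alpha)$ genuinely uniform across the subregimes of $\mu_{2,\alpha}/d_\alpha$. The tight case is $\mu_{2,\alpha}\asymp d_\alpha$: any admissible $\rho_\alpha^{(2)}<d_\alpha/7$ needed to exclude $x_{1,\alpha}$ from $B_{x_{2,\alpha}}(7\rho_\alpha^{(2)})$ then satisfies $\rho_\alpha^{(2)}/\mu_{2,\alpha}=O(1)$, so condition (iii) of \eqref{hypCdts} for the second bubble fails and neither \eqref{Limualpha} nor Lemma~\ref{LemSharpAsypts} applies directly at $x_{2,\alpha}$. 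One must then rescale by $\mu_{2,\alpha}$ on a bounded-radius ball avoiding the rescaled image of $x_{1,\alpha}$, pass to a limit by local elliptic theory, and use the Caffarelli--Gidas--Spruck classification on the limiting profile to extract the needed lower bound on the "inner" sphere $\partial B_{x_{1,\alpha}}(r_\alpha)$. Carrying out this uniform lower bound in all subregimes is the heart of the argument.
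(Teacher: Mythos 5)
Your overall architecture is the right one and matches the paper's: rescale at the scale $d_\alpha$ of the closest pair, and treat separately the regime where the bubbles live at scale $d_\alpha$ (contradiction via the Caffarelli--Gidas--Spruck classification, since the limit would have two critical points) and the regime where they concentrate faster than $d_\alpha$ (contradiction between $\mathcal{H}(0)=0$ from Lemma \ref{LemSharpAsypts} and a strictly positive contribution of the second bubble to the regular part at the origin). But there is a genuine gap, and you have put your finger on it yourself: everything hinges on controlling $u_\alpha$ \emph{near the second point} $x_{2,\alpha}$ at scale $d_\alpha$, and neither \eqref{Eqt2Pr} (which degenerates as $x\to x_{2,\alpha}$) nor Lemma \ref{LemSharpAsypts} (whose hypotheses \eqref{hypCdts} you cannot verify at $x_{2,\alpha}$ when $\mu_{2,\alpha}\asymp d_\alpha$) supplies it. Concretely, in Case A you need local boundedness of the rescaled sequence in a neighbourhood of $y_{2,\alpha}$ in order to pass the critical point to the limit, and in Case B you need a lower bound for $u_\alpha$ at distance $\asymp d_\alpha$ from $x_{2,\alpha}$ that is uniform over the subregimes of $\mu_{2,\alpha}/d_\alpha$; you defer both to ``the heart of the argument.'' The ingredient that closes this is the dichotomy \eqref{Dichot}, imported in the paper from Druet--Hebey and Druet--Hebey--V\'etois: after rescaling by $d_\alpha$, either \emph{all} nearby concentration points satisfy $d_\alpha/\mu_{i,\alpha}=O(1)$, or \emph{all} satisfy $d_\alpha/\mu_{i,\alpha}\to+\infty$. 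This eliminates your mixed regime entirely, yields the uniform local bounds needed in the first case, and guarantees in the second case that the limit of $\tilde u_\alpha(0)\tilde u_\alpha$ has a pole of positive weight $\Lambda_2>0$ at $\tilde x_2$, which the paper then plays against \eqref{ConclEqt1LemmIsol} via the maximum principle, see \eqref{ConclEqt2LemmIsol}. Without this (or an equivalent Harnack-type comparison between neighbouring bubbles) the proof does not close.

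Two smaller points. In Case A you invoke \eqref{Limualpha}, but its hypothesis \eqref{hypCdts} contains the blow-up condition (iii), which fails at $x_{1,\alpha}$ with $\rho_\alpha\asymp d_\alpha$ precisely when $d_\alpha\asymp\mu_{1,\alpha}$; the paper instead extracts a limit of the $d_\alpha$-rescaled sequence solving $\Delta\tilde u=\tilde u^5$ with $\tilde u(0)\ge 1$ (by \eqref{Eqt1Pr}) and applies Caffarelli--Gidas--Spruck to conclude it has a unique critical point. And in Case B, the additive lower bound $\tilde u\ge\sqrt3/\vert x\vert+c''$ on $\partial B_0(1)$ would require a Green's representation with the sharp constant in front of $d_g(y,z)^{-1}$, not merely the two-sided bounds of the type \eqref{Eqt6Lem4}; the paper sidesteps this by identifying the full limit \eqref{ExtGreenLim} and estimating its regular part at $0$ directly.
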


\begin{proof}[Proof of Lemma \ref{BLowUpPtsIsol}] We proceed by contradiction and assume that $d_\alpha \to 0$ as $\alpha \to +\infty$. Then 
$N_\alpha \ge 2$ for $\alpha \gg 1$, and we can assume that the $x_{i,\alpha}$'s are such that 
$d_g(x_{1,\alpha},x_{i,\alpha}) \le d_g(x_{1,\alpha},x_{i+1,\alpha})$ 
for all $i = 2,\dots,N_\alpha$. Let $\delta \in (0,\frac{1}{2}i_g)$ be given. For $x \in B_0(\delta d_\alpha^{-1})$, we let
\begin{equation}\label{DefRendA}
\tilde u_\alpha(x) = d_\alpha^{1/2}u_\alpha\left(\exp_{x_{1,\alpha}}(d_\alpha x)\right)\hskip.1cm .
\end{equation}
We let also 
$\tilde h_\alpha(x) = h_\alpha\left(\exp_{x_{1,\alpha}}(d_\alpha x)\right)$, and $\tilde g_\alpha(x) = \left(\exp_{x_{1,\alpha}}^\star g\right)(d_\alpha x)$, 
where $h_\alpha$ is as in \eqref{Defhalpha}. Then, by \eqref{SWSystAlphaCrit},
\begin{equation}\label{EqtSec5}
\Delta_{\tilde g_\alpha}\tilde u_\alpha + d_\alpha^2\tilde h_\alpha\tilde u_\alpha = \tilde u_\alpha^5\hskip.1cm ,
\end{equation}
and we clearly have that $\tilde g_\alpha \to \xi$ in $C^2_{loc}(\mathbb{R}^3)$ as $\alpha \to +\infty$. Given $R > 0$ we let 
$1 \le N_{R,\alpha} \le N_\alpha$ be such that $d_g(x_{1,\alpha},x_{i,\alpha}) \le Rd_\alpha$ for all $1 \le i \le N_{R,\alpha}$, 
and $d_g(x_{1,\alpha},x_{i,\alpha}) > Rd_\alpha$ for all $N_{R,\alpha}+1 \le i \le N_\alpha$. We have that $N_{R,\alpha} \ge 2$ 
for all $R > 1$, and $(N_{R,\alpha})_\alpha$ is uniformly bounded for all $R > 0$. Mimicking 
the arguments in Druet and Hebey \cite{DruHebEinstLich}, 
see also Druet, Hebey and V\'etois \cite{DruHebVet}, given $R > 0$, there holds that
\begin{equation}\label{Dichot}
\begin{split}
&\hbox{either}\hskip.1cm \tilde u_\alpha(\tilde x_{i,\alpha}) = O(1)\hskip.1cm\hbox{for all}\hskip.1cm 1 \le i \le N_{R,\alpha}\hskip.1cm ,\\
&\hbox{or}\hskip.1cm \tilde u_\alpha(\tilde x_{i,\alpha}) \to +\infty\hskip.1cm\hbox{as}\hskip.1cm \alpha \to +\infty\hskip.1cm\hbox{for all}
\hskip.1cm 1 \le i \le N_{R,\alpha}\hskip.1cm ,
\end{split}
\end{equation}
where the $\tilde u_\alpha$'s are as in \eqref{DefRendA}, and 
\begin{equation}\label{defxia}
\tilde x_{i,\alpha} = \frac{1}{d_\alpha}\exp_{x_{1,\alpha}}^{-1}(x_{i,\alpha})\hskip.1cm .
\end{equation}
Now we split the proof into the study of two cases. In the first case we assume that 
there exist $R>0$ and $1\le i \le N_{R,\alpha}$ such that 
$\tilde u_\alpha(\tilde x_{i,\alpha}) = O(1)$. Then, by \eqref{Dichot}, $\tilde u_\alpha(\tilde x_{i,\alpha}) = O(1)$
for all $1\le i \le N_{R,\alpha}$ and all $R>0$. Noting that the two first equations in \eqref{hypCdts} are satisfied by 
$x_\alpha = x_{i,\alpha}$ and $\rho_\alpha = \frac{1}{8}d_\alpha$, it follows from \eqref{Limualpha} that the 
sequence $(\tilde u_\alpha)_\alpha$ is uniformly bounded in the balls $B_{\tilde x_{i,\alpha}}(1/2)$. Thus, by 
\eqref{EqtSec5} and elliptic theory, the sequence $(\tilde u_\alpha)_\alpha$ is bounded in $C^1_{loc}(\mathbb{R}^3)$. 
Up to a subsequence, still thanks to \eqref{EqtSec5}, we get that the $\tilde u_\alpha$'s converge in $C^1_{loc}(\mathbb{R}^3)$ 
as $\alpha \to +\infty$ to some $\tilde u$ which satisfies $\Delta\tilde u = \tilde u^5$ in $\mathbb{R}^3$. Moreover, 
$\tilde u$ has two critical points which are $0$ and the limit 
$\tilde x_2 \in S^2$ as $\alpha \to +\infty$ of the $\tilde x_{2,\alpha}$'s in \eqref{defxia}. By the classification result of 
Caffarelli, Gidas, and Spruck \cite{CafGidSpr}, this is impossible. In particular, we are left with the second case of our study, where 
we assume that there exist $R > 0$ and $1\le i \le N_{R,\alpha}$ such that 
$\tilde u_\alpha(\tilde x_{i,\alpha}) \to +\infty$ as $\alpha \to +\infty$. Then, by \eqref{Dichot}, 
$\tilde u_\alpha(\tilde x_{i,\alpha}) \to +\infty$ as $\alpha \to +\infty$
for all $1\le i \le N_{R,\alpha}$ and all $R>0$. The assumptions \eqref{hypCdts} are satisfied by 
$x_\alpha = x_{1,\alpha}$ and $\rho_\alpha = \frac{1}{8}d_\alpha$. Let $\tilde v_\alpha = \tilde u_\alpha(0)\tilde u_\alpha$. 
By \eqref{EqtSec5}, 
\begin{equation}\label{EqtSec5bis}
\Delta_{\tilde g_\alpha}\tilde v_\alpha + d_\alpha^2\tilde h_\alpha\tilde v_\alpha = \frac{1}{\tilde u_\alpha(0)^4}\tilde v_\alpha^5\hskip.1cm .
\end{equation}
Noting that $\tilde u_\alpha(0) \to +\infty$ as $\alpha \to +\infty$, mimicking again arguments from Druet and Hebey \cite{DruHebEinstLich}, 
and Druet, Hebey and V\'etois \cite{DruHebVet}, we get with \eqref{EqtSec5bis} that, up to a subsequence, 
$\tilde u_\alpha(0)\tilde u_\alpha \to \tilde G$ in $C^1_{loc}(\mathbb{R}^3\backslash\{\tilde x_i\}_{i\in I})$ as $\alpha \to +\infty$, where 
the $\tilde x_i$'s are the limits of the $\tilde x_{i,\alpha}$'s in \eqref{defxia}, and 
$I= \left\{1,\dots, \lim_{R\to +\infty}\lim_{\alpha\to +\infty} N_{R,\alpha}\right\}$.
Moreover, for any $R>0$, 
\begin{equation}\label{ExtGreenLim}
\begin{split}
\tilde G(x)
&= \sum_{i=1}^{\tilde{N}_R} \frac{\Lambda_i}{\vert x-\tilde{x}_i\vert} + \tilde{H}_R(x)\\
&= \frac{\Lambda_1}{\vert x\vert} + \left(\sum_{i=2}^{\tilde{N}_R} \frac{\Lambda_i}{\vert x-\tilde{x}_i\vert} + \tilde{H}_R(x)\right)
\end{split}
\end{equation}
in $B_0(R)$, where $\Lambda_i > 0$ for all $i$, $\tilde{H}_R$ is harmonic in $B_0(R)$, $2 \le \tilde{N}_R \le N_{2R}$ is such that 
$\vert \hat{x}_{\tilde{N}_R}\vert \le R$ and $\vert \hat{x}_{\tilde{N}_R +1}\vert > R$, 
and $N_{2R,\alpha} \to N_{2R}$ as $\alpha\to+\infty$. By Lemma \ref{LemSharpAsypts}, and \eqref{ExtGreenLim}, 
we get that $\Lambda_1 = \sqrt{3}$ and that 
\begin{equation}\label{ConclEqt1LemmIsol}
\sum_{i=2}^{\tilde{N}_R} \frac{\Lambda_i}{\vert\tilde x_i\vert} + \tilde{H}_R(0) = 0\hskip.1cm .
\end{equation}
Independently, by the maximum principle, since $\tilde G \ge 0$ and $\vert\tilde x_2\vert = 1$, there holds that 
\begin{equation}\label{ConclEqt2LemmIsol}
\sum_{i=2}^{\tilde{N}_R} \frac{\Lambda_i}{\vert\tilde x_i\vert} + \tilde{H}_R(0) \ge \Lambda_2 -\frac{\sqrt{3}}{R} - \frac{\Lambda_2}{(R-1)}\hskip.1cm .
\end{equation}
Choosing $R \gg 1$ sufficiently large, we get a contradiction by combining \eqref{ConclEqt1LemmIsol} 
and \eqref{ConclEqt2LemmIsol}. In particular, $d_\alpha \not\to 0$ as $\alpha \to +\infty$, and this proves Lemma \ref{BLowUpPtsIsol}. 
\end{proof}

Now that we know that blow-up points are isolated, we use elliptic theory and phase compensation to get strong convergence 
of the potential term in the nonlinear equation in \eqref{SWSystAlphaCrit}. Namely we prove that the following lemma holds true.

\begin{lem}\label{EllTheorPhaseComp} Let $(M,g)$ be a smooth compact Riemannian $3$-dimensional manifold and 
$\bigl((u_\alpha,v_\alpha)\bigr)_\alpha$ be a sequence 
of smooth positive solutions of \eqref{SWSystAlphaCrit} such that \eqref{ContrAssumptSubCptness} holds true. Then 
$(u_\alpha)_\alpha$ is bounded in $H^1$ and, up to a subsequence, $u_\alpha \rightharpoonup 0$ in $H^1$ and   
$v_\alpha \to v$ in $C^{0,\theta}$ as $\alpha \to +\infty$, where $v$ is a constant and $0 < \theta < 1$. Moreover, if 
$\lambda > 0$, then $v = 0$ and $h_\alpha \to a-\omega^2$ in $C^{0,\theta}$ as $\alpha \to +\infty$, where $0 < \theta < 1$, 
$\omega$ is the limit of the $\omega_\alpha$'s, and $h_\alpha$ is as in \eqref{Defhalpha}.
\end{lem}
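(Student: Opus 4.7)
\medskip\noindent\textbf{Proof proposal for Lemma \ref{EllTheorPhaseComp}.}

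The plan is to first establish $H^1$-boundedness by integrating the sharp pointwise estimates derived in Section \ref{blowupest}, then extract weak limits and exploit the subcritical coercive structure of the second equation in \eqref{SWSystAlphaCrit} to obtain strong convergence of $v_\alpha$. Throughout we work under \eqref{ContrAssumptSubCptness}, and we use the decomposition of $(u_\alpha)_\alpha$ into isolated bubbles provided by \eqref{Eqt1Pr}--\eqref{Eqt2Pr} together with Lemma \ref{BLowUpPtsIsol} (which gives $d_\alpha \not\to 0$, hence a uniform bound on $N_\alpha$). First I would note that by Lemma \ref{SharpEst} applied at each concentration point $x_{i,\alpha}$ with radius $r_{i,\alpha}$, together with the standard bubble profile \eqref{Limualpha} on the inner scale $B_{x_{i,\alpha}}(\Lambda\mu_{i,\alpha})$, one has $\int_{B_{x_{i,\alpha}}(r_{i,\alpha})} u_\alpha^6 dv_g = O(1)$: the inner region contributes a finite quantity by rescaling to the standard bubble $(1+|x|^2/3)^{-1/2}$, while on the annular region $u_\alpha^6 \le C\mu_{i,\alpha}^3 d_g(x_{i,\alpha},\cdot)^{-6}$ integrates to $O(1)$. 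Combined with the far-field bound \eqref{Eqt2Pr} and the uniform isolation of blow-up points, this yields $\int_M u_\alpha^6 dv_g \le C$. Testing the first equation in \eqref{SWSystAlphaCrit} against $u_\alpha$ and using $0 \le v_\alpha \le 1/q$ then gives $\int_M(|\nabla u_\alpha|^2+au_\alpha^2)dv_g \le C$, which is the $H^1$-bound.

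Next, up to a subsequence, $u_\alpha \rightharpoonup u$ in $H^1$ and $u_\alpha \to u$ strongly in $L^q$ for every $q < 6$ by the Rellich--Kondrakov theorem. The sharp estimate in Lemma \ref{SharpEst} together with the isolation of blow-up points forces $u_\alpha \to 0$ in $C^0_{loc}(M\setminus S)$ where $S$ is the finite set of limits of the $x_{i,\alpha}$'s, so $u_\alpha \to 0$ almost everywhere and therefore $u \equiv 0$. In particular, $u_\alpha \rightharpoonup 0$ in $H^1$ and $u_\alpha \to 0$ in $L^q$ for every $q < 6$.

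For $v_\alpha$, I would exploit the subcritical nature of the second equation in \eqref{SWSystAlphaCrit}. Since $0 \le v_\alpha \le 1/q$ and $u_\alpha^2$ is bounded in $L^3$ by the $H^1$-bound and the Sobolev embedding, the right-hand side $qu_\alpha^2$ and the zero-order term $(\lambda+q^2u_\alpha^2)v_\alpha$ are both bounded in $L^3$. Standard $L^p$-elliptic regularity then gives $v_\alpha$ bounded in $H^{2,3}$, hence in $C^{0,\theta}$ for some $\theta \in (0,1)$ by Sobolev embedding, so up to a subsequence $v_\alpha \to v$ in $C^{0,\theta}$. Passing to the limit in the second equation in the sense of distributions, and using that $u_\alpha^2 \to 0$ in $L^1$ while $v_\alpha \to v$ uniformly, we obtain
\begin{equation*}
\Delta_g v + \lambda v = 0
\end{equation*}
in $M$. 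When $\lambda > 0$, multiplying by $v \in H^1$ and integrating gives $\int_M(|\nabla v|^2+\lambda v^2)dv_g = 0$, so $v \equiv 0$. When $\lambda = 0$, $v$ is harmonic on the closed manifold $M$, hence constant. In either case $v$ is a constant, and $v = 0$ when $\lambda > 0$.

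Finally, under $\lambda > 0$, since $v_\alpha \to 0$ in $C^{0,\theta}$ and $\omega_\alpha \to \omega$, direct substitution into \eqref{Defhalpha} gives $h_\alpha = a - \omega_\alpha^2(qv_\alpha-1)^2 \to a - \omega^2$ in $C^{0,\theta}$. The main obstacle I anticipate is the $H^1$-bound step, specifically justifying $\int_M u_\alpha^6 dv_g = O(1)$ cleanly from the sharp estimates of Section \ref{blowupest}; once this bound is in hand, the remainder is a straightforward combination of weak compactness, the $L^3$-subcritical nature of the equation for $v_\alpha$, and the maximum principle applied to the limit equation $\Delta_g v + \lambda v = 0$.
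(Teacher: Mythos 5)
Your overall route is the same as the paper's, and most of the steps (the $H^{2,3}$ bound on $v_\alpha$ from the subcritical second equation, the passage to the limit equation $\Delta_g v+\lambda v=0$, and the conclusion for $h_\alpha$) are correct as written. But there is a genuine gap exactly where you anticipated one: your derivation of $\int_M u_\alpha^6\,dv_g=O(1)$ does not close. Lemma \ref{SharpEst} only gives the estimate $u_\alpha\le C\mu_{i,\alpha}^{1/2}d_g(x_{i,\alpha},\cdot)^{-1}$ on $B_{x_{i,\alpha}}(\frac{R}{2}r_{i,\alpha})$, and if $r_{i,\alpha}\to 0$ the intermediate annulus $B_{x_{i,\alpha}}(\delta)\setminus B_{x_{i,\alpha}}(\frac{R}{2}r_{i,\alpha})$ is controlled only by \eqref{Eqt2Pr}, i.e.\ $u_\alpha^6\le Cd_g(x_{i,\alpha},\cdot)^{-3}$, whose integral over that annulus is of order $\log(\delta/r_{i,\alpha})$ and blows up. The missing ingredient is Lemma \ref{LemSharpAsypts}: taking $\rho_\alpha=\delta_0$ fixed in \eqref{hypCdts}, the alternative $r_\alpha\to 0$ would force $\rho_\alpha=O(r_\alpha)$, a contradiction, so $r_{i,\alpha}\not\to 0$ and the sharp estimate holds on balls of \emph{fixed} radius $r>0$. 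Only then do your inner/annular computations cover a full neighborhood of each $x_i$ and, together with the uniform bound from \eqref{Eqt2Pr} away from $S$, yield the $L^6$ and hence $H^1$ bounds. This is precisely how the paper proceeds.

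A second, smaller omission: you assert that Lemma \ref{SharpEst} and the isolation of the blow-up points force $u_\alpha\to 0$ in $C^0_{loc}(M\setminus S)$, but the sharp estimate only controls $u_\alpha$ inside the balls $B_{x_i}(r)$; outside them \eqref{Eqt2Pr} gives local boundedness, not decay. What you actually get (as in the paper) is $u_\alpha\to u$ in $C^1_{loc}(M\setminus S)$ with $u\equiv 0$ on the punctured balls $B_{x_i}(r)\setminus\{x_i\}$; since $u\ge 0$ solves $\Delta_g u+hu=u^5$ with $h$ the $C^{0,\theta}$-limit of the $h_\alpha$'s, the strong maximum principle then forces $u\equiv 0$ on all of $M$ because $u$ vanishes on an open set. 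Without this step the weak limit could a priori be positive far from the concentration set, and the conclusion $u_\alpha\rightharpoonup 0$ would not follow.
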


\begin{proof}[Proof of Lemma \ref{EllTheorPhaseComp}] By Lemma \ref{BLowUpPtsIsol}, the sequence $(N_\alpha)_\alpha$ is uniformly bounded.
Up to a subsequence we may assume that $N_\alpha = N$ for all $\alpha$. Without loss of generality we may also assume that for any $\delta > 0$,
$$\sup_{B_{x_{i,\alpha}}(\delta)}u_\alpha \to +\infty$$
as $\alpha \to +\infty$ for all $i = 1,\dots,N$. It follows that there exists $\delta_0 > 0$, sufficiently small, such that 
\eqref{hypCdts} holds true with $x_\alpha = x_{i,\alpha}$ and $\rho_\alpha = \delta_0$ for all $i = 1,\dots,N$. We fix $i = 1,\dots,N$ arbitrary. 
By Lemma \ref{LemSharpAsypts}, $r_\alpha \not\to 0$ as $\alpha \to +\infty$. Then it follows from Lemma \ref{SharpEst} that there exist 
$r > 0$ and $C > 0$ such that
\begin{equation}\label{EstimateStrg}
u_\alpha(x) \le C\mu_\alpha^{\frac{1}{2}}d_g(x_\alpha,x)^{-1}
\end{equation}
for all $x \in B_{x_\alpha}(r)\backslash\{x_\alpha\}$ and all $\alpha$, where $\mu_\alpha$ is as in \eqref{DefMualpha}. 
In particular, together with \eqref{Limualpha}, this implies that
\begin{eqnarray*}
\int_{B_{x_\alpha}(r)}u_\alpha^6dv_g
&=& \int_{B_{x_\alpha}(\mu_\alpha)}u_\alpha^6dv_g 
+ \int_{B_{x_\alpha}(r)\backslash B_{x_\alpha}(\mu_\alpha)}u_\alpha^6dv_g\\
&\le& C
\end{eqnarray*}
for all $\alpha$, and since the $h_\alpha$'s in \eqref{Defhalpha} are bounded in $L^\infty$, we get with \eqref{SWSystAlphaCrit} that 
the $u_\alpha$'s are actually bounded in $H^1$. As a first consequence, since
\begin{equation}\label{EqtvAlpha}
\Delta_gv_\alpha+ \left(\lambda + q^2u_\alpha^2\right)v_\alpha = qu_\alpha^2\hskip.1cm ,
\end{equation}
and $0 \le v_\alpha \le \frac{1}{q}$, we get that that the $v_\alpha$'s are such that the $\left(\Delta_gv_\alpha + v_\alpha\right)$'s are 
bounded in $L^3$. By elliptic theory it follows the $v_\alpha$'s are bounded in $H^{2,3}$, and we can write that, up to a subsequence, 
\begin{equation}\label{ConvLInfty}
v_\alpha \to v
\end{equation}
in $C^{0,\theta}$ as $\alpha \to +\infty$ for some $v$, where $0 < \theta < 1$. Up to a subsequence, since the $u_\alpha$'s are bounded in $H^1$, 
we can assume that $u_\alpha \rightharpoonup u$ in $H^1$
as $\alpha \to +\infty$ for some $u \in H^1$. Let $x_i$ be the limit of the $x_{i,\alpha}$'s as $\alpha \to +\infty$, 
$i = 1,\dots,N$. By \eqref{hypCdts}, the $u_\alpha$'s are bounded in $L^\infty_{loc}(M\backslash S)$, where $S = \bigl\{x_1,\dots,x_N\bigr\}$. 
By elliptic estimates it follows that $u_\alpha \to u$ in $C^1_{loc}(M\backslash S)$ as $\alpha \to +\infty$. Coming back to 
\eqref{EstimateStrg} we then get that, necessarily, $u \equiv 0$ in $\bigcup_{i=1}^NB_{x_i}(r)$. By \eqref{ConvLInfty}, 
we can assume that $h_\alpha \to h$ in $C^{0,\theta}$ as $\alpha \to +\infty$ for some $h$, and $u$ solves
$\Delta_gu + hu = u^5$. 
In particular, the maximum principle applies and we actually have that $u \equiv 0$ in $M$. By Rellich-Kondrakov we can assume that 
$u_\alpha \to 0$ in $L^p$ as $\alpha \to +\infty$ for $p < 6$. Then, by \eqref{EqtvAlpha}, $\Delta_gv + \lambda v = 0$ in $M$. In 
particular, $v$ is a constant, and if $\lambda > 0$, then $v = 0$. This proves Lemma \ref{EllTheorPhaseComp}.
\end{proof}

In what follows we let $\delta > 0$ be given, sufficiently small, and let 
$\eta \in C^\infty(M\times M)$, $0 \le \eta \le 1$, be such that $\eta(x,y) = 1$ if $d_g(x,y) \le \delta$ 
and $\eta(x,y) = 0$ if $d_g(x,y) \ge 2\delta$. For $x\not= y$ we define
\begin{equation}\label{GreenPrincPart}
H(x,y) = \frac{\eta(x,y)}{\omega_2d_g(x,y)}\hskip.1cm ,
\end{equation}
where $\omega_2$ is the volume of the unit $2$-sphere. 
The following lemma, which will be used in the proof of the uniform bound in Theorem \ref{MainThm} when $p = 6$, 
establishes basic estimates for the Green's functions of Schr\"odinger's operators as well 
as a positive mass property for such operators that we deduce from the maximum principle and the 
positive mass theorem of Schoen and Yau \cite{SchYau1}.

\begin{lem}\label{PositiveMass} Let $(M,g)$ be a smooth compact Riemannian $3$-dimensional manifold and 
$\Lambda \in C^\infty(M)$ be such that $\Delta_g+\Lambda$ is coercive. The Green's function $G$ of $\Delta_g+\Lambda$ can be written as
\begin{equation}\label{GreenDec}
G(x,y) = H(x,y) + R(x,y)
\end{equation}
for all $(x,y) \in M\times M\backslash D$, where $D$ is the diagonal in $M\times M$, and $R$ is continuous 
in $M\times M$. Moreover, for any $x \in M$, there exists $C > 0$ such that
\begin{equation}\label{GreenFctProp1}
d_g(x,y)\vert\nabla R_x(y)\vert \le C
\end{equation}
for all $y \in M\backslash\{x\}$, where $R_x(y) = R(x,y)$, and there also holds that
\begin{equation}\label{GreenFctProp2}
\delta_\alpha\max_{y \in \partial B_x(\delta_\alpha)}\vert\nabla R_x(y)\vert = o(1)
\end{equation}
for all sequences $(\delta_\alpha)_\alpha$ of positive real numbers converging to zero. At last, if 
we assume that $\Lambda \le \frac{1}{8}S_g$, the inequality being strict at least at one point if $(M,g)$ 
is conformally diffeomorphic to the unit $3$-sphere, then $R(x,x) > 0$ for all $x \in M$.
\end{lem}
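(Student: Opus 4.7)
The plan is to realize $R$ as the regular part of the Green's function via an explicit parametrix, exploit elliptic regularity for the gradient estimates, and obtain positivity of $R(x,x)$ by comparison with the Green's function of the conformal Laplacian, to which the positive mass theorem applies.

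First, I would construct the decomposition. In geodesic normal coordinates centered at $x$, a direct computation using $g_{ij}=\delta_{ij}+O(r^2)$ shows that
\begin{equation*}
(\Delta_g+\Lambda)H(x,\cdot)=\delta_x+\psi_x
\end{equation*}
in $\mathcal{D}'(M)$, with $\psi_x \in L^p(M)$ for some $p>3$ depending only on $g$ and $\eta$; this is because the discrepancy $\Delta_g(d_g(x,\cdot)^{-1})-\Delta_\xi r^{-1}$ blows up at worst like $r^{-1}$ at the origin, which is $L^p_{\rm loc}$ for any $p<3$ in dimension three, and the cutoff $\eta$ localizes the rest. Since $\Delta_g+\Lambda$ is coercive, it admits a positive Green's function $G$, and $R_x:=G_x-H_x$ satisfies $(\Delta_g+\Lambda)R_x=-\psi_x$ weakly; $L^p$-elliptic regularity with $p>3$ yields $R_x\in C^{0,\theta}(M)$ for some $\theta\in(0,1)$, and joint continuity of $R$ in $(x,y)$ follows by running the same argument uniformly in $x$.

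The gradient bounds follow from this regularity. On the ball $B_y(\tfrac12 d_g(x,y))$, the function $R_x$ solves the coercive equation $(\Delta_g+\Lambda)R_x=-\psi_x$ with $L^p$ right-hand side and is bounded on $M$, so interior $C^{1,\alpha}$-estimates yield $|\nabla R_x(y)|\le C d_g(x,y)^{-1}$, which is \eqref{GreenFctProp1}. For \eqref{GreenFctProp2} I would refine the parametrix (e.g.\ by peeling off one more term of the Hadamard expansion) so that $\psi_x$ is H\"older continuous near $x$ and hence $R_x\in C^{1,\alpha}$ in a neighborhood of $x$; then $|\nabla R_x|$ is bounded near $x$ and $\delta_\alpha\max_{\partial B_x(\delta_\alpha)}|\nabla R_x|=O(\delta_\alpha)=o(1)$.

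The core of the lemma is the positivity claim. Let $L_g:=\Delta_g+\tfrac18 S_g$ be the conformal Laplacian; since $\Lambda\le\tfrac18 S_g$ and $\Delta_g+\Lambda$ is coercive, so is $L_g$, and $L_g$ admits a positive Green's function $G_0$ with decomposition $G_0=H+R_0$. The constant $R_0(x,x)$ coincides with the constant $A$ in the expansion \eqref{GreenSchYau}, which by the positive mass theorem of Schoen and Yau \cite{SchYau1} applied to the asymptotically flat scalar-flat metric $G_0(x,\cdot)^4 g$ is nonnegative, and zero if and only if $(M,g)$ is conformally diffeomorphic to $(S^3,g_0)$. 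Setting $f:=G-G_0$, the singular parts cancel so $f$ is continuous on $M\times M$, and in the second variable
\begin{equation*}
(\Delta_g+\Lambda)f(x,\cdot)=\bigl(\tfrac18 S_g-\Lambda\bigr)G_0(x,\cdot)\ge 0.
\end{equation*}
Representing $f$ as $\int G(\cdot,z)(\tfrac18 S_g-\Lambda)(z)G_0(x,z)\,dv_g(z)$ gives $f\ge 0$ everywhere, hence $R(x,x)\ge R_0(x,x)$. If $(M,g)$ is not conformal to $(S^3,g_0)$, then $R_0(x,x)>0$ and we are done. If $(M,g)\cong(S^3,g_0)$, the hypothesis gives $\tfrac18 S_g-\Lambda>0$ on an open set, and since $G(x,\cdot)G_0(x,\cdot)$ is positive and locally integrable (behaving like $d_g(x,\cdot)^{-2}$ near $x$), the integral representation yields $f(x,x)>0$, so $R(x,x)>0$.

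The main obstacle is ensuring that the parametrix is sharp enough for the decomposition $G=H+R$ to identify $R_0(x,x)$ with the constant term $A$ in Schoen's expansion \eqref{GreenSchYau}, so that the positive mass theorem can be invoked verbatim; once this identification is in place, the remaining steps reduce to routine applications of the maximum principle and elliptic regularity.
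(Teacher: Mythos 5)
Your proof is correct in substance and, for the heart of the lemma (the positivity of $R(x,x)$), follows the same route as the paper: compare $G$ with the Green's function $G_0$ of the conformal Laplacian, use that $(\Delta_g+\Lambda)(G_x-G_{0,x})=(\tfrac18 S_g-\Lambda)G_{0,x}\ge 0$, and invoke the positive mass theorem of Schoen--Yau for the constant term $A$ in \eqref{GreenSchYau}. The paper packages the comparison through an auxiliary function $\tilde h$ solving $(\Delta_g+\Lambda)\tilde h=h$ with $0\le h\le(\tfrac18 S_g-\Lambda)\tilde G_x$ and the maximum principle, whereas you use the Green's representation of $f=G-G_0$ directly; these are equivalent, both ultimately resting on the positivity of the Green's function of the coercive operator $\Delta_g+\Lambda$, and your version handles the sphere case (strict inequality on an open set forces $f(x,x)>0$) just as cleanly. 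Where you genuinely diverge is in \eqref{GreenFctProp1}--\eqref{GreenFctProp2}: the paper rescales $R_x$ around a sequence $y_\alpha\to x$, gets $C^1$-bounds at scale $d_g(x,y_\alpha)$ from $d_g(x,y)\vert\Delta_gR_x(y)\vert\le C$, and derives the $o(1)$ in \eqref{GreenFctProp2} from Liouville's theorem (the rescaled limit is a bounded harmonic function on $\mathbb{R}^3\setminus\{0\}$, hence constant); you instead use scaled interior elliptic estimates for \eqref{GreenFctProp1} and a refined parametrix for \eqref{GreenFctProp2}. Both work; the blow-up argument avoids computing the next Hadamard term, while your refinement yields the stronger conclusion that $\vert\nabla R_x\vert$ is bounded near $x$.

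Two points need repair. First, your claim that $\psi_x\in L^p$ for some $p>3$ contradicts your own justification: the discrepancy is $O(d_g(x,\cdot)^{-1})$, which lies in $L^p$ only for $p<3$ in dimension $3$. This still gives $R_x\in W^{2,p}\hookrightarrow C^{0,\theta}$ for $p\in(3/2,3)$, so continuity of $R$ survives, but the exponent must be corrected. Second, in the argument for \eqref{GreenFctProp2}, refining the parametrix changes $R$: the original $R_x$ equals the refined remainder plus the correction term $H_1$, and $\nabla H_1$ is homogeneous of degree $0$, hence bounded but in general not continuous at $x$. So the original $R_x$ need not be $C^{1,\alpha}$ near $x$ as you assert; what you actually obtain is that $\vert\nabla R_x\vert$ is bounded near $x$, which is all that is needed for $\delta_\alpha\max_{\partial B_x(\delta_\alpha)}\vert\nabla R_x\vert=O(\delta_\alpha)=o(1)$. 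Finally, your identification of $R_0(x,x)$ with $A$ in \eqref{GreenSchYau} is indeed the crux you flag, but with $H$ as in \eqref{GreenPrincPart} it is immediate in dimension $3$ since $G_{0,x}-\frac{1}{\omega_2 d_g(x,\cdot)}=A+O(d_g(x,\cdot))$ near $x$.
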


\begin{proof}[Proof of Lemma \ref{PositiveMass}] The decomposition \eqref{GreenDec} is well  known. It is also known that 
there exists $C > 0$ such that $d_g(x,y)\vert\Delta_gR_x(y)\vert \le C$ for all $y \in M\backslash\{x\}$. Possible references 
for such properties are Aubin \cite{Aub1}, or Druet, Hebey and Robert \cite{DruHebRob}. We refer also to Robert \cite{RobGreen}. Now we 
establish \eqref{GreenFctProp1} and \eqref{GreenFctProp2}. We let $(y_\alpha)_\alpha$ be an arbitrary sequence in $M\backslash\{x\}$ such that 
$y_\alpha \to x$ as $\alpha \to +\infty$. Let $\delta_\alpha = d_g(x,y_\alpha)$ and $R_\alpha(y) = R_x\left(\exp_x(\delta_\alpha y)\right)$ 
for $y \in \mathbb{R}^3$. Let also $g_\alpha$ be the metric given by $g_\alpha(y) = (\exp_x^\star g)(\delta_\alpha y)$, and 
$\tilde y_\alpha \in \mathbb{R}^3$ be such that $y_\alpha = \exp_x(\delta_\alpha\tilde y_\alpha)$. There holds that 
$\vert\Delta_g R_\alpha(y)\vert \le C\delta_\alpha \vert y\vert^{-1}$, that $(R_\alpha)_\alpha$ is bounded in $L^\infty$, and that 
$g_\alpha \to \xi$ in $C^1_{loc}(\mathbb{R}^3)$ as $\alpha \to +\infty$, where $\xi$ is the Euclidean metric. There also holds that 
$\vert\tilde y_\alpha\vert = 1$ for all $\alpha$. Let $\tilde y$ be such that $\tilde y_\alpha \to \tilde y$ as $\alpha \to +\infty$. 
Since $\vert\tilde y\vert = 1$ it follows from standard elliptic theory that $(R_\alpha)_\alpha$ is bounded in the 
$C^1$-topology in the Euclidean ball of center $\tilde y$ and radius $1/4$. Since $(y_\alpha)_\alpha$ is arbitrary, this proves 
\eqref{GreenFctProp1}. Noting that $\Delta_{g_\alpha}R_\alpha \to 0$ uniformly in compact subsets of 
$\mathbb{R}^3\backslash\{0\}$ as $\alpha \to +\infty$, we get that $R_\alpha \to R$ in $C^1_{loc}(\mathbb{R}^3)$ as 
$\alpha \to +\infty$, where $R$ is harmonic and bounded in $\mathbb{R}^3\backslash\{0\}$. By Liouville's theorem, $R$ is constant. 
This implies \eqref{GreenFctProp2}. Now it remains to prove the positive mass property that $R(x,x) > 0$ for all $x$ if 
we assume that $\Lambda \le \frac{1}{8}S_g$, the inequality being strict at least at one point if $(M,g)$ 
is conformally diffeomorphic to the unit $3$-sphere. Let $\tilde G$ be the Green's function of the conformal Laplacian 
$\Delta_g+\frac{1}{8}S_g$. Let $x \in M$ and $h \ge 0$ be smooth and such that
$$h \le \left(\frac{1}{8}S_g-\Lambda\right)\tilde G_x\hskip.1cm .$$
If $\Lambda \equiv \frac{1}{8}S_g$, then $h \equiv 0$, but if not the case we can take $h \not\equiv 0$. Let $\tilde h$, smooth, 
be such that $\Delta_g\tilde h + \Lambda\tilde h = h$. 
Then $\tilde h \ge 0$ and $\tilde h \not\equiv 0$ if $h\not\equiv 0$. In particular, by the maximum principle, 
$\tilde h > 0$ in $M$ if $\tilde h \not\equiv 0$. Let $\mathcal{H} = G_x - \tilde G_x - \tilde h$. Noting that
$$\Delta_g\mathcal{H} + \Lambda\mathcal{H} \ge 0$$
in $M$, and that by the local expansions of $G_x$ and $\tilde G_x$, 
$\mathcal{H}$ is continuous in $M$, we get from the maximum principle that $\mathcal{H} \ge 0$ in $M$. In particular, by \eqref{GreenSchYau}, 
$R(x,x) \ge A + \tilde h(x)$, and we get that $R(x,x) > 0$ by 
the positive mass theorem of Schoen and Yau \cite{SchYau1}. This ends the proof of the lemma.
\end{proof}

Thanks to Lemma \ref{EllTheorPhaseComp} and Lemma \ref{PositiveMass} we can now prove the uniform bound in Theorem \ref{MainThm} when $p = 6$. 
In the process we use the asymptotic control we obtained in Lemma \ref{SharpEst}.

\begin{proof}[Proof of the uniform bound in Theorem \ref{MainThm} when $p = 6$] In what follows we consider a smooth compact $3$-dimensional Riemannian manifold $(M,g)$, 
and let $a > 0$ be a smooth positive function in $M$, $\omega \in (-\omega_a,\omega_a)$, and $(\omega_\alpha)_\alpha$ be a sequence such that 
$\omega_\alpha \to \tilde\omega$ as $\alpha \to +\infty$ for some $\tilde\omega \in [-\omega_a,-\omega]\bigcup[\omega,\omega_a]$, 
where $\omega_a$ is as in \eqref{DefOm0a}. We assume either that $\vert\tilde\omega\vert < \omega_a$, or that $\Delta_g + (a-\omega_a^2)$ 
is a coercive operator. In case $a$ is constant and $\tilde\omega = \omega_a$, we apply the arguments in Section \ref{PrfThm2}, noting that $v_\alpha = 1/q$ for all 
$\alpha$ in case $\lambda = 0$. We let 
$\bigl((u_\alpha,v_\alpha)\bigr)_\alpha$ be a sequence 
of smooth positive solutions of \eqref{SWSyst} with phases $\omega_\alpha$ and $p = 6$. In particular, the $u_\alpha$'s and $v_\alpha$'s satisfy 
\eqref{SWSystAlphaCrit}. 
We assume by contradiction that \eqref{ContrAssumptSubCptness} holds true and we assume \eqref{MainAssumpt}, 
with the property that \eqref{MainAssumpt} is strict at least at one point if $(M,g)$ is conformally diffeomorphic to the unit $3$-sphere 
and $\omega\lambda = 0$. By Lemma \ref{BLowUpPtsIsol}, the sequence $(N_\alpha)_\alpha$ is uniformly bounded. 
Up to a subsequence we may assume that $N_\alpha = N$ for all $\alpha$. We let $x_i$ be the limit of the $x_{i,\alpha}$'s as $\alpha \to +\infty$, 
and let the $\mu_{i,\alpha}$ be as in \eqref{DefMualpha} given by
$$\mu_{i,\alpha} = u_\alpha(x_{i,\alpha})^{-2}$$
for all $i = 1,\dots,N$ and all $\alpha$. Without loss of generality 
we can assume that $\mu_{i,\alpha} \to 0$ as $\alpha \to+\infty$ for all $i$. We reorganize the $i$'s such that, up to a subsequence, 
$$\mu_{1,\alpha} = \max_i\mu_{i,\alpha}$$
and we define $\mu_i \ge 0$ by
\begin{equation}\label{DefMui}
\mu_i = \lim_{\alpha \to+\infty}\frac{\mu_{i,\alpha}}{\mu_{1,\alpha}}\hskip.1cm .
\end{equation}
By Lemma \ref{SharpEst} and the Harnack inequality for any $\delta > 0$ there exists $C > 0$ such that 
\begin{equation}\label{EstEndPrf}
u_\alpha \le C\mu_{1,\alpha}^{1/2}
\end{equation}
in $M\backslash\bigcup_{i=1}^NB_{x_{i,\alpha}}(\delta)$ for all $\alpha$. There holds that
\begin{equation}\label{EqtRenU}
\Delta_g(\mu_{1,\alpha}^{-1/2}u_\alpha) + h_\alpha(\mu_{1,\alpha}^{-1/2}u_\alpha) 
= \mu_{1,\alpha}^2(\mu_{1,\alpha}^{-1/2}u_\alpha)^5
\end{equation}
for all $\alpha$, where $h_\alpha$ is as in \eqref{Defhalpha}. By Lemma \ref{EllTheorPhaseComp}, the $h_\alpha$'s converge in $C^{0,\theta}$, 
$\theta \in (0,1)$. Let $h$ be the limit of the $h_\alpha$'s. Still by Lemma \ref{EllTheorPhaseComp}, $h = a-\tilde\omega^2$ if $\lambda > 0$. 
In general, $h = a - \tilde\omega^2(qv-1)^2$ so that $h \ge a-\tilde\omega^2 \ge a-\omega_a^2$. By our assumptions that either $\vert\tilde\omega\vert < \omega_a$, or 
$\Delta_g + (a-\omega_a^2)$ is coercive, we get that $\Delta_g + h$ is coercive. 
Combining \eqref{EstEndPrf} and \eqref{EqtRenU}, we get thanks to standard elliptic theory that
\begin{equation}\label{ConvToGr}
\mu_{1,\alpha}^{-1/2}u_\alpha \to \mathcal{H}
\end{equation} 
in $C^1_{loc}(M\backslash S)$ as $\alpha \to +\infty$, where $S = \left\{x_1,\dots,x_N\right\}$. By \eqref{SWSystAlphaCrit}, Lemma \ref{SharpEst}, \eqref{EstEndPrf}
and the convergence of the $h_\alpha$'s to $h$, we can write that
\begin{equation}\label{EqtForH}
\Delta_g\mathcal{H} + h\mathcal{H} = \sqrt{3}\omega_2\sum_{i=1}^N\mu_i^{1/2}\delta_{x_i}\hskip.1cm .
\end{equation}
Since $\Delta_g + h$ is coercive, we get from \eqref{EqtForH} that
\begin{equation}\label{ExplFormForH}
\mathcal{H}(x) = \sqrt{3}\omega_2\sum_{i=1}^N\mu_i^{1/2}\left(H(x_i,x) + R(x_i,x)\right)
\end{equation}
where $H$ and $R$ are as in Lemma \ref{PositiveMass} with $\Lambda = h$. Let $i = 1,\dots,N$ be arbitrary and $X_\alpha$ be the $1$-form given by 
$X_\alpha = \nabla f_\alpha$, where $f_\alpha(x) = \frac{1}{2}d_g(x_{i,\alpha},x)^2$. We apply the 
Pohozaev identity in Druet and Hebey \cite{DruHebStrongCoup1} to $u_\alpha$ in $B_{x_{i,\alpha}}(r)$, $r > 0$ small. By Lemma \ref{SharpEst}, multiplying 
the Pohozaev identity by $\mu_{1,\alpha}^{-1}$, letting $\alpha \to +\infty$ tend to infinity, we get that
\begin{equation}\label{PohId}
\begin{split}
&\frac{1}{6}\int_{\partial B_{x_i}(r)}\left(\hbox{div}_gX\right)\mathcal{H}\partial_\nu\mathcal{H} d\sigma_g\\
&- \int_{\partial B_{x_i}(r)}\left(\frac{1}{2}X(\nu)\vert\nabla\mathcal{H}\vert^2 - (X,\nabla\mathcal{H})\partial_\nu\mathcal{H}\right)d\sigma_g\\
&= \frac{1}{12}\int_{\partial B_{x_i}(r)}\left(\partial_\nu(\hbox{div}_gX)\right)\mathcal{H}^2d\sigma_g + o(1)\hskip.1cm ,
\end{split}
\end{equation}
where $X = \nabla f$, $f = \frac{1}{2}d_g(x_i,\cdot)^2$, $\nu$ is the unit outward normal derivative to $\partial B_{x_i}(r)$, and $o(1) \to 0$ 
as $r \to 0$. We have that $\hbox{div}_gX = 3 + O\left(d_g(x_i,x)^2\right)$ and that $\left\vert\nabla\hbox{div}_gX\right\vert = O\left(d_g(x_i,x)\right)$. 
By Lemma \ref{SharpEst} there also holds that $\vert\mathcal{H}\vert \le Cd_g(x_i,\cdot)^{-1}$ in $M\backslash\{x_i\}$. Thanks to 
\eqref{GreenFctProp1} we then get that
\begin{equation}\label{PohN3Comp2}
\lim_{r\to 0}\int_{\partial B_{x_i}(r)} \left(\partial_\nu \left(div_g X\right)\right)\mathcal{H}^2 d\sigma_g = 0
\end{equation}
and that
\begin{equation}\label{PohN3Comp3}
\frac{1}{6}\int_{\partial B_{x_i}(r)}\left(\hbox{div}_gX\right)\mathcal{H}\partial_\nu\mathcal{H} d\sigma_g 
= \frac{1}{2} \int_{\partial B_{x_i}(r)}\mathcal{H}\partial_\nu\mathcal{H} d\sigma_g + o(1)
\end{equation}
as $r \to 0$. Choosing $\delta > 0$ in the definition of $\eta$ in \eqref{GreenPrincPart} such that $d_g(x_j,x_k) \ge 4\delta$ for all $j, k = 1,\dots,N$ 
such that $x_j\not= x_k$, we get that $\mathcal{R}(x_j,x_i) \ge 0$ for all $j \not= i$. By \eqref{GreenFctProp2} and \eqref{ExplFormForH}, we compute
\begin{equation}\label{PassLimPohN3}
\begin{split}
&\frac{1}{2} \int_{\partial B_{x_i}(r)}\mathcal{H}\partial_\nu\mathcal{H} d\sigma_g  
- \int_{\partial B_{x_i}(r)}\left(\frac{1}{2}X(\nu)\vert\nabla\mathcal{H}\vert^2 - (X,\nabla\mathcal{H})\partial_\nu\mathcal{H}\right)d\sigma_g\\
&= -\frac{3\omega_2}{2}\mu_i^{1/2}\sum_{j=1}^N\mu_j^{1/2}\mathcal{R}(x_j,x_i) + o(1)\hskip.1cm .
\end{split}
\end{equation}
Combining \eqref{PohId}--\eqref{PassLimPohN3}, letting $r \to 0$, it follows that
\begin{equation}\label{MassConcl}
\mu_i^{1/2}\mu_j^{1/2}\mathcal{R}(x_j,x_i) = 0
\end{equation}
for all $i, j$. Letting $i, j = 1$, we have that $\mu_1 = 1$, and it follows from \eqref{MassConcl} that 
$\mathcal{R}(x_1,x_1) = 0$. By assumption, $h < \frac{1}{8}S_g$ if $\omega\lambda \not= 0$, and in case 
$\omega\lambda = 0$, we get that $h \le a \le \frac{1}{8}S_g$ with the property that either the manifold is 
not conformally diffeomorphic to the unit $3$-sphere, or that the inequality is trict at one point. By Lemma 
\ref{PositiveMass} it follows that $\mathcal{R}(x_1,x_1) > 0$ and we get a contradiction. 
This proves that there exists $C > 0$ such that $\Vert u_\alpha\Vert_{L^\infty} \le C$ for all $\alpha$ and all 
$\omega_\alpha \in K(\omega)$. Since we also have that 
$0 \le v_\alpha \le \frac{1}{q}$ for all $\alpha$, it follows from elliptic theory and \eqref{SWSystAlphaCrit} that the $u_\alpha$'s and $v_\alpha$'s are 
bounded in $H^{2,p}$ for all $p > 1$. The $C^{2,\theta}$-bound easily follows. This ends the proof of Theorem \ref{MainThm}.
\end{proof}

\section{Proof of Theorem \ref{SecThm}}\label{PrfThm2}

We use here part of the analysis developed in Sections \ref{blowupest} and \ref{Sect3}, together with a nice concluding 
argument from Br\'ezis and Li \cite{BreLi}. 
As in the proof of Theorem \ref{MainThm} we proceed by contradiction. We let $\bigl((u_\alpha,v_\alpha)\bigr)_\alpha$ be a sequence 
of smooth positive solutions of 
\begin{equation}\label{SWSystAlphaPotentCrit}
\begin{cases}
\Delta_gu_\alpha + a_\alpha u_\alpha = u_\alpha^5 + \omega_\alpha^2\left(qv_\alpha-1\right)^2u_\alpha\\
\Delta_gv_\alpha + \left(\lambda+q^2u_\alpha^2\right)v_\alpha = qu_\alpha^2
\end{cases}
\end{equation}
such that $\max_Mu_\alpha \to +\infty$ as $\alpha \to +\infty$, 
where the $a_\alpha$'s are smooth positive functions and the $\omega_\alpha$'s are phases in $\left(-\omega_{a_\alpha},\omega_{a_\alpha}\right)$ such that 

(i) either $a_\alpha-\omega_\alpha^2 \to 0$ in $L^\infty(M)$ as $\alpha \to +\infty$ and $\lambda > 0$\hskip.1cm ,\hskip.1cm or
 
 (ii) $a_\alpha \to 0$ in $L^\infty(M)$ and $\omega_\alpha \to 0$ as $\alpha \to +\infty$, and $\lambda \ge 0$\hskip.1cm .

\noindent The estimates in Section \ref{blowupest} as well as Lemma \ref{BLowUpPtsIsol}, which establishes that $d_\alpha \not\to 0$ as $\alpha \to +\infty$, 
still hold true in the present situation. Let $h_\alpha$ be given by
$$h_\alpha = a_\alpha - \omega_\alpha^2\left(qv_\alpha-1\right)^2\hskip.1cm .$$
Without loss of generality, up to passing to a subsequence, we may assume that $a_\alpha \to a$ in $C^{0,\theta}$ and that $\omega_\alpha \to \omega$ as $\alpha \to +\infty$. 
Then, by the arguments developed in Lemma \ref{EllTheorPhaseComp}, we get that $v_\alpha \to v$ in $C^{0,\theta}$ as $\alpha \to +\infty$, and we then get with (i) and (ii) that 
$h_\alpha \to 0$ in $C^{0,\theta}$ as $\alpha \to +\infty$. As in the proof of Theorem \ref{MainThm} in Section \ref{Sect3}, the convergence $\mu_{1,\alpha}^{-1/2}u_\alpha \to \mathcal{H}$
in \eqref{ConvToGr} holds true. However, in the present situation, $h \equiv 0$ and we get that
$\Delta_g\mathcal{H} = 0$ in $M\backslash S$, while
$$\Delta_g\mathcal{H} = \sqrt{3}\omega_2\sum_{i=1}^N\mu_i^{1/2}\delta_{x_i}$$
in the sense of distributions, where we adopt the notations of Section \ref{Sect3}. In other words, $\mathcal{H}$ is a nonnegative harmonic function with poles, 
and this is impossible. Indeed, from now on, without loss of generality, we may assume that $\mu_i > 0$ 
for all $i$ (we know that at least $\mu_1 > 0$). Then $\mathcal{H} \ge 0$ and $\mathcal{H}$ is not constant. 
Let $G$ be a Green's function of $\Delta_g$, and $G_i = G(x_i,\cdot)$.
By regularity theory,
$$\mathcal{H} = \sqrt{3}\omega_2\sum_{i=1}^N\mu_i^{1/2}G_i + \mathcal{F}\hskip.1cm,$$
where $\mathcal{F}$ is smooth. In particular, by standard expansion of the Green's function at its pole, see, for instance, Aubin 
\cite{Aub2}, there exists $x \in M\backslash S$ where $\mathcal{H}$ 
attains its minimum. 
By the maximum principle we would get that $\mathcal{H}$ is actually constant in $M\backslash S$, a contradiction. 

\medskip\noindent Acknowledgement: Electronic version of an article published as [CCM, 12, 5, 2010, 831-869][DOI No: 10.1142/S0219199710004007]\copyright[copyright World Scientific Publishing Company][http://www.worldscinet.com/ccm/ccm.shtml]

\end{document}